\documentclass[11pt, leqno]{amsart}
\usepackage[utf8]{inputenc}
\usepackage{geometry}
\usepackage{etoolbox}
\usepackage{lipsum, bm}
\usepackage[english]{babel}
\usepackage{csquotes}
\usepackage{mathrsfs, enumerate}
\usepackage{mathtools, tikz, pgfplots, subcaption, mathdots}
\usepackage{pythonhighlight}
\usepackage{xcolor, multicol}
\usepackage{yfonts, float}
\usetikzlibrary{patterns}
\usepackage{thmtools}
\usepackage[makeroom]{cancel}

\usepackage{tocvsec2}
\tikzset{cross/.style={cross out, draw=black, minimum size=2*(#1-\pgflinewidth), inner sep=0pt, outer sep=0pt},
	cross/.default={2pt}}
\usetikzlibrary{shapes.misc}

\usepackage[breaklinks=true]{hyperref}
\usepackage{cleveref}

\newcommand{\height}[0]{\mathrm{ht}}
\newcommand{\wt}[0]{\mathrm{wt}}
\newcommand{\ad}[0]{\mathrm{ad}}

\usepackage{amsmath, amsthm, amssymb}

\theoremstyle{plain}
\newtheorem{theorem}{Theorem}[section]
\newtheorem{lemma}[theorem]{Lemma}
\newtheorem{prop}[theorem]{Proposition}

\newtheorem{thmx}{Theorem}[section]

\theoremstyle{definition}
\newtheorem{definition}[theorem]{Definition}

\newtheorem{example}[theorem]{Example}

\newtheorem{question}{Question}
\newtheorem*{p-psp}{parabolic-PSP}
\newtheorem{corollary-C}[theorem]{Corollary to Theorem C}

\newtheorem{remark}[theorem]{Remark}

\newtheorem*{claim}{Claim}
\numberwithin{equation}{section}

\newtheoremstyle{problem}{5pt}{5pt}{}{}{\normalfont}{\textbf{:}}{.5em}{}
\theoremstyle{problem}

\newtheorem{note}{\textbf{Note}}

\begin{document}
	\title[Characters of modules over negative rank-2 Borcherds--Kac--Moody Lie algebras]{Characters of modules over negative rank-2 Borcherds--Kac--Moody Lie algebras}

\author{Souvik Pal, Supravat Sarkar and G. Krishna Teja}
    
	\subjclass[2010]{Primary: 17B10; Secondary: 17B20, 17B22, 17B67, 17B70, 52B20.}
	\keywords{Borcherds--Kac--Moody algebras; signed-predecessor of dominant integral weight cone and its Verma modules' maximal vectors and simple highest weight modules' characters.}
	\begin{abstract}
		Let $\mathfrak{g}=\mathfrak{g}(A)$ be the Borcherds--Kac--Moody Lie algebra (BKM LA) for a BKM Cartan matrix $A$ that is filled by  negative integers. 
        Fix a Cartan subalgebra $\mathfrak{h}$ of $\mathrm{g}$ and the classical cone of dominant integral weights $P^+\subset \mathfrak{h}^*$.
The non-integrable simple highest weight $\mathfrak{g}$-modules $L(\mu)$'s widely studied were those by Naito ([{\it Trans. Amer. Soc.}, 1995]), for $\mu$'s dot-linked to $P^+$-translates of sums $- \sum_{j\in J}\alpha_j$ of mutually orthogonal and imaginary simple roots $\alpha_j$'s.

    Recently, we computed weights of all highest weight $\mathfrak{g}$-modules $V$'s (over all BKM LA's), and character of $L(\rho)$ for Weyl vector $\rho$.
    These needed a family of ``integrable''  $L(\mu)$'s for $\mu$'s inside our novel signed-dominant-integral cone $P^{\pm}$ (which generalizes $P^+$).
Pairings $\mu(\alpha_i^{\vee})\leq 0$ for $\mu\in P^{\pm}$ are multiples of $\frac{A_{ii}}{2}$ for all $i$.
   Nevertheless, $L(\mu)$ contains  ``Chevalley--Serre relations'' $f_i^{\frac{2}{A_{ii}}{\mu(\alpha_i^{\vee})}+1}L(\mu)_{\mu}=0$, which seem to be previously unstudied and even in Naito's works.
    
    This paper initiates in rank-2, the study of module structures and maximal vectors (or Verma embeddings) in Verma covers $M(\mu)$'s of $L(\mu)$'s for $\mu\in P^{\pm}$.
    Our goal in this is to explore in weight spaces of those Vermas, the strictness, or else a uniform equality, of lower bounds by Kac and Kazhdan ([{\it Adv. Math.}, 1979]) for  count of linearly independent maximal vectors. 
    We obtain presentations and characters of all $V$'s when Kac--Kazhdan equation has unique solution in the interior of root-cone.
    This builds on results of Kac and Kazhdan in crucial unique solution case.
\end{abstract}
	\maketitle
	\settocdepth{section}
	\tableofcontents
	\allowdisplaybreaks
    \section{Introduction, and Statements and Discussions of Main Results} \label{Section 1}
Let the sets of complex numbers, real numbers, non-negative reals, integers, non-negative integers and natural numbers be denoted by $\mathbb{C}\supset  \mathbb{R} \supset \mathbb{R}_{\geq 0}\supset\mathbb{Z}\supset \mathbb{Z}_{\geq 0}\supset \mathbb{N}$.
Throughout the paper, the Lie algebras and their modules we study are over $\mathbb{C}$.
 
 Let $\mathfrak{g}=\mathfrak{g}(A)$ be the Borcherds--Kac--Moody Lie algebra (BKM LA), for any fixed BKM-Cartan (BKMC) matrix $A$ whose rows and columns are indexed by $\mathcal{I}$.
$\mathfrak{g}$ has the Lie bracket $[.,.]$ and universal enveloping algebra $U(\mathfrak{g})$. 
 For  $i\in \mathcal{I}$, recall $A_{ii}$ is either $2$ or $0$ or is any negative real number.
  Let $\mathfrak{h}$ be a fixed Cartan subalgebra in $\mathfrak{g}$, and relative to it, let $\Pi=\{\alpha_i\ |\ i\in \mathcal{I}\}$ and $\Pi^{\vee}= \{\alpha_i^{\vee}\ |\ i\in \mathcal{I}\}$ be the simple roots and simple co-roots.  
Let $\{e_i \text{ (raising)},\ f_i \text{ (lowering)},\ \alpha_i^{\vee}\ |\ i\in \mathcal{I}\}$ be the Chevalley generators of $\mathfrak{g}$.
 Let $\mathfrak{g}=\mathfrak{n}^+\oplus \mathfrak{h}\oplus \mathfrak{n}^-$ be the triangular decomposition of $\mathfrak{g}$.
Let $\preceq$ be the usual {\it partial order} on $\mathfrak{h}^*$.
See \cite{Wakimoto} and \cite[Chapter 11]{Kac book} for meticulous introduction to BKM LAs.
  
 R. Borcherds (\cite{Borcherds J. Alg, Borcherds Liesuper}) introduced BKM LAs generalizing his Monster vertex operator algebras (modules over affine KM LAs studied in \cite{Lepowsky E8, Lepowsky Vertex operators}); the latter play a central role in string theory and conformal field theory, quantum chromodynamics and quantum gravity (\cite{Witten}).
This was for settling the Conway and Norton's Moonshine conjecture(s) \cite{Conway--Norton, Thompson 2}.
BKM LAs are crucial settings of  contragredient LAs studied by Kac--Kazhdan \cite{Kac--Kazhdan}.

Given $\lambda\in \mathfrak{h}^*$, let $M(\lambda)\twoheadrightarrow L(\lambda)$ be the Verma and simple highest weight $\mathfrak{g}$-modules respectively, both with highest weight (which we abbreviate as h.w.) $\lambda$.
For a highest weight module $V\twoheadleftarrow M(\lambda)$ and element $\mu\in \mathfrak{h}^*$, let $V_{\mu}:=\big\{ v\in V \ \big|\ h\cdot v= \mu(h)v\ \text{for all } h\in \mathfrak{h} \big\}$ be the $\mu$-weight-space of $V$.

Recently,  \cite{T-P} introduced {\it signed-dominant-integral cone} $P^{\pm}$ \big(Definition \ref{Defn integrability of lambda}(a)\big) for following: 
 a)~Study of full Chevalley--Serre relations in all $L(\mu)$'s, by {\it maximal vectors} arising in \eqref{Eqn 1-dim maxl vect} below. 
 b)~To explicitly compute weight-set $\wt V$ of every highest weight $\mathfrak{g}$-module (h.w.m.) $V$.
 This extended in one stroke and completed the qualitative picture of $\wt V$'s that was sought-for and built across a series of works by Khare, Dhillon and of the first and third authors \cite{Khare_JA,  Khare_Trans, Dhillon_arXiv, Khare_Ad, MDWF, WFHWMRS}.\\
 c) Step b) required working {\it along full $P^{\pm}$-directions}, with maximal vectors (killed by $\mathfrak{n}^+$) in \eqref{Eqn 1-dim maxl vect} that are lost during $M(\lambda) \twoheadrightarrow V$.
 Those lost vectors generalize {\it integrability}, or invariance of $V$'s under subgroups of Weyl group of $\mathfrak{g}$, and are studied under the concept of {\it Holes in $V$'s} in \cite{T-P}.

 \begin{definition}\label{Defn integrability of lambda}
    (a) $P^{\pm}\ :=\  \Big\{ \lambda\in \mathfrak{h}^*\ \Big| \ \lambda(\alpha_i^{\vee})\in \frac{A_{ii}}{2}\mathbb{Z}_{\geq 0}\ \text{for all } i\in \mathcal{I} \Big\} $.  (b) The {\it classical dominant-integral weight-cone} over contragredient $\mathfrak{g}$ (\cite{Kac--Kazhdan}), yielding foundational and well-studied integrable simple h.w.m.s, is $P^+:= \big\{ \lambda\in\mathfrak{h}^*\ \big|\  \lambda(\Pi^{\vee})\subset \mathbb{R}_{\geq 0} \text{ and } \lambda\big(\{\alpha_i^{\vee}\ |\ A_{ii}=2 \}\big)\subset \mathbb{Z}_{\geq 0} \big\}$.
    And $\alpha_i$'s with $A_{ii}=2$ are called {\it real simple roots}, and the remaining are {\it imaginary simple} roots.
    \end{definition}
    \begin{note}
 $P^{\pm}$ generalizes $P^+$, notably to accommodate crucial Weyl vector $\rho \notin P^+$ as $\rho(\alpha_i^{\vee})=\frac{A_{ii}}{2}$ $\forall\ i$.
 \cite[Example 4.5]{T-P} shows neither $P^+\not\subset P^{\pm}$ nor $P^{\pm}\not\subset P^+$ in general.
This paper focus on the side where all simple roots are  imaginary.
In such negative settings of all $A_{ii}\leq 0$, generically $P^+\cap P^{\pm}=\{0\}\subset\mathfrak{h}^*$.
The naturality of $P^{\pm}$-cone is seen by following ``complete $\mathfrak{sl}_2$-theory'' picture:
 \end{note}
 \begin{lemma}[{\cite[Lemma 2.1]{T-P}}]\label{Lemma rank-1 BKM rep. theory}
Fix any contragredient $\mathfrak{g}(A)$ and $i\in \mathcal{I}$ (note $A_{ii}\in \mathbb{C}$), and $\lambda\in \mathfrak{h}^*$.
Then $f_i^n M(\lambda)_{\lambda}= M(\lambda)_{\lambda-n \alpha_i}$ are maximal $\iff\ n= 
\begin{cases}
\frac{2}{A_{ii}}\lambda(\alpha_i^{\vee})+1,\ &\text{ if }A_{ii}\neq 0,\\    
\text{any natural number} &\text{ if }A_{ii}\ = \ \lambda(\alpha_i^{\vee})\ = \ 0.
\end{cases} $
\end{lemma}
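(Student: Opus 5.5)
The plan is to reduce the question to a single commutator computation in $U(\mathfrak{g})$. First, $M(\lambda)_{\lambda-n\alpha_i}$ is one-dimensional and spanned by $f_i^n m_\lambda$, where $m_\lambda$ is the highest weight vector. The reason is that $M(\lambda)\cong U(\mathfrak{n}^-)$ as vector spaces, and by the PBW theorem the only monomials of weight $-n\alpha_i$ in the root vectors of $\mathfrak{n}^-$ are built from root vectors of weight a negative multiple of $\alpha_i$. Those roots are only $-\alpha_i$, except possibly $-2\alpha_i,-3\alpha_i,\dots$ when $A_{ii}\le 0$. Even in that case the subalgebra generated by $f_i$ alone is one-dimensional: $[f_i,f_i]=0$, and the free Lie algebra on one generator is $\mathbb{C}f_i$, so $\mathfrak{n}^-_{-k\alpha_i}=0$ for $k\ge2$. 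Hence $\mathfrak{n}^-_{-\mathbb{Z}_{>0}\alpha_i}=\mathbb{C}f_i$, and the weight space is $\mathbb{C}f_i^n m_\lambda$, which is nonzero because $M(\lambda)$ is free over $U(\mathfrak{n}^-)$.

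Next, a vector of $M(\lambda)$ is maximal if and only if it is killed by every $e_j$, since the $e_j$ generate $\mathfrak{n}^+$. For $j\neq i$ we have $[e_j,f_i]=0$, so $e_j f_i^n m_\lambda=f_i^n e_j m_\lambda=0$ automatically. It therefore remains to compute $e_i f_i^n m_\lambda$. Using $[e_i,f_i]=\alpha_i^\vee$ and $[\alpha_i^\vee,f_i]=-A_{ii}f_i$, the standard induction gives
\[
e_i f_i^n m_\lambda=\sum_{k=0}^{n-1} f_i^{\,n-1-k}\,\alpha_i^\vee\, f_i^{\,k} m_\lambda
= \Big(n\lambda(\alpha_i^\vee)-A_{ii}\tfrac{n(n-1)}{2}\Big) f_i^{n-1}m_\lambda .
\]
Since $f_i^{n-1}m_\lambda\neq0$, maximality of $f_i^n m_\lambda$ for $n\ge1$ is equivalent to
\[
n\Big(\lambda(\alpha_i^\vee)-\tfrac{A_{ii}}{2}(n-1)\Big)=0 .
\]

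Finally, I split into cases, with $n\in\mathbb{N}$ so that $n\neq0$. If $A_{ii}\neq0$, the condition reads $n-1=\frac{2}{A_{ii}}\lambda(\alpha_i^\vee)$, that is, $n=\frac{2}{A_{ii}}\lambda(\alpha_i^\vee)+1$. If $A_{ii}=0$, the condition reads $\lambda(\alpha_i^\vee)=0$, and when it holds every $n$ works. In the remaining sub-case $A_{ii}=0\neq\lambda(\alpha_i^\vee)$, no $n$ works; the statement implicitly covers this because neither branch of the displayed formula applies.

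The only delicate step is the first one: justifying that the weight space $M(\lambda)_{\lambda-n\alpha_i}$ is exactly $\mathbb{C}f_i^n m_\lambda$ even for imaginary $\alpha_i$. I would settle it by the observation that the Lie subalgebra generated by the single element $f_i$ is $\mathbb{C}f_i$. Alternatively, one can use the Serre-type presentation of the contragredient algebra, in which weights in $-\mathbb{Z}_{>0}\alpha_i$ can only arise from brackets of copies of $f_i$ alone.
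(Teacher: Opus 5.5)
Your proof is correct. It is the standard rank-one argument: $M(\lambda)_{\lambda-n\alpha_i}=\mathbb{C}f_i^nm_\lambda$ since no $k\alpha_i$ with $k\geq 2$ is a root; $e_j$ for $j\neq i$ commutes with $f_i$; and $e_if_i^nm_\lambda=n\big(\lambda(\alpha_i^{\vee})-\tfrac{n-1}{2}A_{ii}\big)f_i^{n-1}m_\lambda$, whose vanishing for $n\geq 1$ gives exactly the two cases. The paper itself gives no proof but quotes the lemma from \cite[Lemma 2.1]{T-P} as its ``complete $\mathfrak{sl}_2$-theory'' picture, and your computation is precisely that $\mathfrak{sl}_2$-type argument.
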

\noindent
{\bf Example.}  $f_1^5 M(-4)_{-4}$ over $\mathfrak{g}\big([-2]_{1\times  1}\big)$ are maximal.
So $\dim L(-4)=5< \infty$; despite $2\alpha_1\notin P^+$.
\begin{remark}\label{Remark all 1-dim maxl vect}
Throughout, we assume maximal vectors to be non-zero.
Our ``cumulative $(\mathfrak{sl}_2\oplus \cdots \oplus \mathfrak{sl}_2)$-theory'' reveals  maximal vectors in 1-dim. weight spaces in $M(\lambda)$ by  following monomials:
\begin{equation}\label{Eqn 1-dim maxl vect}
f_{r_1}^{\frac{2}{A_{r_1r_1}}\lambda(\alpha_{r_1}^{\vee})+1}\cdots f_{r_k}^{\frac{2}{A_{r_kr_k}}\lambda(\alpha_{r_k}^{\vee})+1}\times f_{r_{k+1}}^{n_{k+1}}\cdots f_{r_{l}}^{n_l}\times f_{r_{l+1}}^{\lambda(\alpha_{r_{l+1}}^{\vee})+1}\cdots f_{r_s}^{\lambda(\alpha_{r_s}^{\vee})+1} \cdot m_{\lambda}\quad\text{where}
\end{equation}
i) $f_{r_1},\ldots, f_{r_s}$ commute; 
ii) $A_{r_1, r_1}, \ldots, A_{r_k, r_k}<0,\ \  A_{r_{k+1}, r_{k+1}}= \cdots = A_{r_l, r_l}=0,\ \ A_{r_{l+1}, r_{l+1}}=\cdots= A_{r_{s}, r_s}=2$;  iii) $\lambda(\alpha_{r_1}^{\vee}),\ldots,  \lambda(\alpha_{r_k}^{\vee}), \lambda(\alpha_{r_{l+1}}^{\vee}),\ldots, \lambda(\alpha_{r_s}^{\vee}) $ are suitably scaled integers by Lemma \ref{Lemma rank-1 BKM rep. theory}, and $n_{k+1}, \ldots, n_{l}\ \in \mathbb{Z}_{\geq 0}$ are arbitrary; and iv) $m_{\lambda}\neq 0$ is a fixed highest weight vector in $M(\lambda)_{\lambda}$.
\end{remark}\smallskip
\noindent
{\bf Goals.} Our paper focuses on  aspects P1)--P2) along $P^{\pm}$-platform, for multiple applications that we discuss in this section, including the following: Fix $\lambda\in P^{\pm}$. \smallskip\\
P1) Structure of Verma modules $M(\lambda)$ and construction of their maximal vectors. (See Theorem~\ref{Theorem maximal vectors}.)
P2) Character of $L(\lambda)$, and of all quotients $V \twoheadrightarrow M(\lambda)$ in some cases. (See Theorem \ref{Theorem composition series}.)\smallskip\\
Throughout the paper, we study above for $\lambda\in P^{\pm}$, over rank-2 settings of $\mathfrak{g}(A)$ for BKMC matrices
\begin{equation}\label{Eqn defn cartan matrix A(a,b,c,d)}
  A(b,a,c,d)\ :=\ 
\begin{bmatrix}
   -b & -a\\
   -c & -d
\end{bmatrix},\   \text{for }\ a,...,d\in \mathbb{N}.\quad 
\left(\text{Fundamental type is }A(2):= \begin{bmatrix}
    -2 & -1\\
    -1 & -2 
\end{bmatrix}.\right)
\end{equation}
$A(b,a,c,d)$ is simplest and important setting (in view of Remarks \ref{Remark rank 2 settings} and \ref{Remark rank-2 advantages}) to initiate the study of $L(\lambda)$ for $\lambda\in P^{\pm}$.
 These simples are previously unstudied to the best of our knowledge and of experts M. Wakimoto, A. Khare and S. Viswanath.
 We begin by the first problem of our focus.
\begin{question}\label{Question Lmax simple?}
    Consider $L(\lambda)= \frac{M(\lambda)}{\big\langle f_1^{\lambda(\alpha_1^{\vee})+1} M(\lambda)_{\lambda},\ f_2^{\lambda(\alpha_2^{\vee})+1}M(\lambda)_{\lambda} \big\rangle}$ over $\mathfrak{sl}_3(\mathbb{C})$ for $\lambda\in P^+$.
    This quotienting imparts integrability, Weyl group symmetry, finite-ness, etc to $L(\lambda)$.
    Similarly, ``finite-ness with non-integrability'' is seen for $L(\lambda)$ with $\lambda\in P^{\pm}$ in the special case over rank-1 BKM $\mathfrak{g}(A)$'s.\smallskip\\
    Consider $\mathfrak{g}\big(A(2)\big)$ and $\lambda\in P^{\pm}$ with $\lambda(\alpha_i^{\vee})= -(M_i-1) \in  \mathbb{Z}_{\leq 0}$ for all $i$.
    ($M_1=M_2=1 \iff \lambda=0$.)\\
   Killing all $f_i^{- (-1)(M_i-1)+1}M(\lambda)_{\lambda}$ from $M(\lambda)$ leads to loss of weight-lines $\lambda- (M_i+\mathbb{Z}_{\geq 0})\alpha_i$.\smallskip\\
 (a) Now what cumulative effects or properties (say, similar to those above in type $A_2$) are casted\\
 \hspace*{0.5cm} on  $L^{\max}(\lambda):= \frac{M(\lambda)}{\big\langle f_1^{M_1} M(\lambda)_{\lambda},\ f_2^{M_2}M(\lambda)_{\lambda} \big\rangle}$ by the simultaneous killing of the two maximal vectors?\\
 (b) Is $L^{\max}(\lambda)$ in the above case simple?\quad
 (c) If $\lambda=\rho$ (so $M_1=M_2=2$), is $L^{\max}(\rho)$ simple?
\end{question}

 \cite[\S 3.2]{T-P} initiated computing characters of  $L(\lambda)$'s for $\lambda\in P^{\pm}$.
Notably \cite[Theorem D]{T-P} recorded characters of $L(\rho)$ and also of all h.w.m.s $V$'s, for negative BKMC matrix $A=A(n)$ \big(similar to $A(2)$\big) which is type-$A_n$ Cartan matrix but with $-2$'s replacing $2$'s.
Remark \ref{Remark on presentation of new simples} addresses questions (a)--(c); its part 2) provides first motivation to goal P1).
Our first motivation to P2) is as follows.
\begin{remark}
Kac--Kazhdan \cite{Kac--Kazhdan} quoted the study of $L(-\rho)$ and  simplicity of $M(-\rho)$ over contragredient $\mathfrak{g}(A)$'s to be important.
\cite{Kac--Kazhdan} conjectured on characters of $L(-\rho)$ using its free imaginary root-directions; see \cite{Hayashi} for its proof in some cases.
However, none of $L(\rho)\twoheadleftarrow M(\rho)$ \big(elements P1)--P2)\big) for h.w. $+\rho$, were previously studied to the best of our knowledge, even in  pioneering works \cite{Naito 1}--\cite{Naito BGG 2} of S. Naito which extended Kazhdan--Lusztig theory to Borcherds setting. 
In our notations, for h.w.s $\mu$'s dot-linked to weights of the maximal vectors in \eqref{Eqn 1-dim maxl vect}, Naito determined characters of (non-integrable) $L(\mu)$'s \underline{only if $\lambda(\alpha_{r_1}^{\vee}) =\cdots = \lambda(\alpha_{r_k}^{\vee})=0$}.
When $\mu\in P^+$ as well, only such simple roots orthogonal to $\lambda$ play  a role in Weyl--Kac--Borcherds (WKB) character formulas; see \cite{Kac book, Wakimoto}.
With this limitation of classical techniques, our approach is based on solving Kac--Kazhdan equation \eqref{Eqn dot action by any positive root} below.
It might be interesting to extend Naito's results for $\lambda$'s with at least one of $\lambda(\alpha_{r_1}^{\vee}),\ldots, \lambda(\alpha_{r_k}^{\vee})$ negative as in Lemma \ref{Lemma rank-1 BKM rep. theory}.
\end{remark}
 
\begin{remark}\label{Remark on presentation of new simples}
1) \cite[Theorem D]{T-P} solved Question \ref{Question Lmax simple?}(b) positively only for $\lambda=\rho$, over $\mathfrak{g}\big(A(n)\big)$.
2) On the other hand, \cite[Corollary 3.8 to Theorem C]{T-P} over $A(b,a,c=a,d)$, negatively answers that problem for $\lambda\in P^{\pm}\setminus \{\rho\}$.
That was done when $M_1,M_2\geq 2$ and when $\lambda-\alpha_1-\alpha_2$ satisfies {\it Kac--Kazhdan equation} \eqref{Eqn dot action by any positive root} below, by computing the maximal vector 
\begin{equation}\label{Eqn maxl vector (1,1)}
2af_1f_2m_{\lambda} +b(M_2-1)[f_2,f_1]m_{\lambda}\  \in\ \ M(\lambda)_{\lambda-\alpha_1-\alpha_2}\setminus  \ U(\mathfrak{g})\cdot \{f_1^{M_1} M(\lambda)_{\lambda},\ f_2^{M_2}M(\lambda)_{\lambda}\}.
\end{equation} 
3) At the same time, \cite[ Lemma 6.2; Lemma 6.3 or Proposition 3.7 (a); and  Lemma 6.6 Plots (C) and (D)]{T-P} show the following properties to be common to $A(2)$ and type $A_2$ cases:
Monomial type maximal vectors $f_2^nf_1^{M_1}M(\lambda)_{\lambda}$ in $M(\lambda)$'s by dot-actions of ``reflections about imaginary simple roots''; finite sized-ness of those ``dot-orbits''; and finitely many exponentials (involving sometimes fewer weights than whole dot-orbits) in numerators in characters of $L(\lambda)$.
\end{remark}
Our second motivation to study maximal vectors is by a classical result of Kac and Kazhdan in Proposition \ref{Prop KK}\eqref{maxl vect count inequality} on locating and counting maximal vectors; see Question \ref{Question strict inequality?}. 
The procedure involved is exploring solutions to generalized {\it strong dot-linkage condition}  \eqref{Eqn dot action by any positive root} below by \cite{Kac--Kazhdan}.
Namely, if $M(\lambda)_{\mu}$ has a maximal vector (i.e. $\mu$ is a {\it maximal weight}), then there exist positive roots $\beta_1,\ldots, \beta_k$ for $\mu$ such that every partial sum in $\mu=\lambda- \sum_{t=1}^k n_t\beta_t$ satisfies the following condition.
\begin{align}\label{Eqn dot action by any positive root}
\underset{(\text{as referred to in \cite[Theorem 4.2]{Malikov}})}{\textbf{Kac--Kazhdan equation :}} \qquad  \bigg( \lambda+\rho-\sum_{t=1}^i n_t\beta_t\ ,\ 2\beta_{i+1}\bigg) = n_{i+1}(\beta_{i+1}, \beta_{i+1})\ \ \forall\ i. 
\end{align}
For our purpose, we recalled \eqref{Eqn dot action by any positive root} for symmetrizable $\mathfrak{g}$ with its invariant form $(.,.)$.
In finite type, celebrated Harish-chandra--BGG theories show the maximal weights when $\lambda\in P^+$ to be the dot-conjugates of $\lambda$ the under Weyl group.
Quantitatively, span of all the maximal vectors in space $M(\lambda)_{\mu}$ can have dimensions more than unity, as follows; unlike in finite type (\cite[Theorem 4.2]{Hump BGG}) by Noetherianess of $U(\mathfrak{g})$.
Let $\Delta$ and $\Delta^+$ be the set of roots and positive roots in $\mathfrak{g}$ respectively.
\begin{prop}[{\cite[Proposition 4.1]{Kac--Kazhdan}}]\label{Prop KK}
    Fix a symmetrizable contragredient $\mathfrak{g}$ and $\lambda\in \mathfrak{h}^*$.
    Fix a solution $\mu=\lambda-\beta_1$ to \eqref{Eqn dot action by any positive root} for $\beta_1\in \Delta^+$.
Recall, $\mu$ is a minimal solution if no $\mu'\precneqq \mu$ satisfies \eqref{Eqn dot action by any positive root}.  
In $M(\lambda)_{\lambda-\beta_1}$, let $r$ be the number of linearly independent maximal vectors.
   \[
\mu \ \text{ is a minimal solution to \eqref{Eqn dot action by any positive root}} \quad \implies \quad r\ = \ \sum_{t\in \mathbb{N}} \dim \mathfrak{g}_{\big(\frac{1}{t}\beta_1\big)}.
  \vspace*{-4mm} \] 
    \begin{equation}\label{maxl vect count inequality}
      \text{In general,\ \ when }\beta_1 \text{ is not isotropic:}\qquad   r\ \ \geq\  \ \sum_{t\in \mathbb{N}} \dim \mathfrak{g}_{\big(\frac{1}{t}\beta_1\big)}.
    \end{equation}
    
\end{prop}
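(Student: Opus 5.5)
I would prove this the way Kac and Kazhdan do: through the Shapovalov (contravariant) form on $M(\lambda)$ and its determinant on each weight space. The maximal vectors in $M(\lambda)_{\lambda-\beta_1}$ are exactly the vectors of that weight space lying in the radical of the contravariant form $\langle\cdot,\cdot\rangle$. Vectors of lower weight in the maximal submodule pair trivially with $m_\lambda$, and a vector $v$ is maximal iff $e_iv=0$ for all $i$. On $\lambda-\beta_1$ this is equivalent to $v$ being orthogonal to $M(\lambda)_{\lambda-\beta_1}$, once one knows that $\lambda-\beta_1+\alpha_i$ carries no radical. Hence
\[
r\;=\;\dim\big(\mathrm{Rad}\cap M(\lambda)_{\lambda-\beta_1}\big)\;-\;\dim\big((U(\mathfrak n^-)_{<0}\cdot \mathrm{Rad}_{\succ\lambda-\beta_1})_{\lambda-\beta_1}\big),
\]
and the task is to compare this with the vanishing order of the Shapovalov determinant.

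First I would recall the Kac--Kazhdan determinant formula on $M(\lambda)_{\lambda-\eta}$:
\[
\det{}_\eta(\lambda)\;=\;\prod_{\gamma\in\Delta^+}\prod_{n\ge1}\Big(2(\lambda+\rho,\gamma)-n(\gamma,\gamma)\Big)^{\mathrm{mult}\gamma\cdot P(\eta-n\gamma)}
\]
up to a nonzero constant, where $P$ is the Kostant partition function. Next I would restrict to a generic line $\lambda+s\nu$ with $s\in\mathbb C$ and use the Jantzen filtration $M^{(k)}$ on $M(\lambda+s\nu)$ over $\mathbb C[[s]]$. Its sum formula reads $\sum_k \mathrm{ch}\,M^{(k)}=\sum_{(\gamma,n)} \mathrm{ord}_s(\text{factor})\,\mathrm{mult}\gamma\,\mathrm{ch}\,M(\lambda-n\gamma)$, the sum running over the pairs $(\gamma,n)$ satisfying \eqref{Eqn dot action by any positive root}. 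Reading this off in weight $\lambda-\beta_1$, the contributions come from the pairs with $n\gamma=\beta_1$, i.e. $\gamma=\frac1t\beta_1$ with $n=t$. Each contributes $\mathrm{mult}\big(\tfrac1t\beta_1\big)$ times $\dim M(\lambda-\beta_1)_{\lambda-\beta_1}=1$, with vanishing order exactly one when $\nu$ is chosen so that $(\nu,\gamma)\neq0$. This last condition is where non-isotropy enters: for $(\beta_1,\beta_1)\ne0$ the different $t$ give genuinely distinct equations, and the order is $1$. Pairs with $n\gamma\precneqq\beta_1$ contribute further terms $\mathrm{ch}\,M(\lambda-n\gamma)$ evaluated at weight $\lambda-\beta_1$.

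In the minimal case no such smaller solutions exist. Then $M^{(1)}_{\lambda-\beta_1}$ is the whole radical there, and every higher weight space of $M^{(1)}$ vanishes, so every radical vector is maximal. The sum formula then gives $\sum_k\dim M^{(k)}_{\lambda-\beta_1}=\sum_t\dim\mathfrak g_{\beta_1/t}$, with all factors simple zeros. I would then need $M^{(2)}_{\lambda-\beta_1}=0$, i.e. that the form on $M^{(1)}$ is nondegenerate modulo $s^2$, which should follow from the simple-zero property. That yields the equality for $r$.

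For the general inequality I would argue by a deformation and semicontinuity argument. Perturb $\lambda$ within the hyperplane $H=\bigcap_t\{2(\lambda+\rho,\beta_1/t)=t(\beta_1/t,\beta_1/t)\}$. Since $\beta_1$ is non-isotropic these conditions all coincide with the single hyperplane $2(\lambda+\rho,\beta_1)=(\beta_1,\beta_1)$. A generic point $\lambda'$ of $H$ avoids the countably many other hyperplanes coming from smaller $n\gamma\prec\beta_1$, so $\lambda'-\beta_1$ is a minimal solution and has exactly $\sum_t\dim\mathfrak g_{\beta_1/t}$ independent maximal vectors. Being maximal is a closed algebraic condition in $\lambda$: it is the kernel of the linear maps $e_i$, which depend polynomially on $\lambda$ in the PBW basis. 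Hence the dimension of the space of maximal vectors is upper semicontinuous on $H$, and its value at $\lambda\in H$ is at least the generic value. The main obstacle is the middle step, justifying the simple-zero and $M^{(2)}=0$ claim. I would handle it by choosing the line direction $\nu$ generic in $H^\perp$ and comparing the leading term in $s$ with the $\mathfrak{sl}_2$-type computation, as in Lemma~\ref{Lemma rank-1 BKM rep. theory}, lifted to root vectors in $\mathfrak g_{\beta_1/t}$.
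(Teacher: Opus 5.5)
\textbf{Verdict: there is a genuine gap.} Your framework is the one the paper attributes to Kac--Kazhdan: the Shapovalov determinant, the Jantzen filtration along a line transversal to $H$, then semicontinuity. The easy half of it is sound. In the minimal case the determinant formula shows that $\mathrm{Rad}$ has no weights strictly between $\lambda-\beta_1$ and $\lambda$. Hence $r=\dim M^{(1)}_{\lambda-\beta_1}\le\sum_k\dim M^{(k)}_{\lambda-\beta_1}=E$, where $E:=\sum_t\dim\mathfrak g_{\beta_1/t}$.

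Your passage from the minimal case to the general inequality is also correct. It uses upper semicontinuity of $\dim\bigcap_i\ker e_i$ on the irreducible hyperplane $H$, together with the fact that generic points of $H$ are minimal. Non-isotropy enters there, but not as you say. The pairs $(\beta_1/t,t)$ cut out $H$ for every $\beta_1$. What $(\beta_1,\beta_1)\neq0$ actually guarantees is that no pair $(\gamma,n)$ with $n\gamma\precneqq\beta_1$ defines the same hyperplane; for example, $(\beta_1/t,n)$ with $n<t$ does so when $\beta_1$ is isotropic.

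Separately, your displayed formula for $r$ is false outside the minimal case. Over $\mathfrak{sl}_3$ with $\lambda(\alpha_1^\vee)=0$ and $\lambda(\alpha_2^\vee)=-1$, the maximal vector $f_2f_1m_\lambda$ spans $\mathrm{Rad}_{\lambda-\alpha_1-\alpha_2}$ and lies in $U(\mathfrak n^-)f_1m_\lambda$. So the formula gives $0$ while $r=1$. This is harmless, since you only apply it where the subtracted term vanishes.

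The gap is the reverse inequality $r\ge E$ in the minimal case. That is the real content of the proposition, and the general inequality inherits the gap, because semicontinuity only transports whatever lower bound holds at generic points of $H$.

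Your claim that $M^{(2)}_{\lambda-\beta_1}=0$ ``should follow from the simple-zero property'' does not hold:
\begin{itemize}
\item All $E$ factors are nonzero multiples of the single linear form $\ell=2(\lambda+\rho,\beta_1)-(\beta_1,\beta_1)$, so near $\lambda$ the determinant is $\ell^{E}$ times a unit.
\item If $s^{a_1},\dots,s^{a_N}$ are the elementary divisors of the Shapovalov matrix along your line, the determinant gives only $\sum_i a_i=E$, whereas $r=\#\{i:a_i\ge1\}$. The Jantzen sum formula is derived from the determinant, so it carries no extra information.
\item A single divisor $s^{E}$, i.e.\ $r=1$, is consistent with everything you use. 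Compare $\mathrm{diag}(s,s)$ and $\left(\begin{smallmatrix}0&s\\ s&1\end{smallmatrix}\right)$: both are symmetric with determinant $\pm s^2$, but their coranks at $s=0$ are $2$ and $1$.
\end{itemize}

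Choosing $\nu$ cleverly cannot help. Along every transversal line one has $\sum a_i=E$ and $\#\{a_i\ge1\}=r$, so whether $M^{(2)}_{\lambda-\beta_1}$ vanishes is a property of the point $\lambda$ alone.

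The $\mathfrak{sl}_2$ comparison does not lift either. Lemma~\ref{Lemma rank-1 BKM rep. theory} concerns only $\beta_1\in\mathbb N\alpha_i$. For a non-simple root vector $f\in\mathfrak g_{-\beta_1/t}$ one has $[e_i,f]\neq0$ for some $i$, so $f^tm_\lambda$ is not maximal.

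What is missing is an independent argument that produces $E$ linearly independent maximal vectors at generic points of $H$. Equivalently, you need to show that the induced form $\lim_{s\to0}s^{-1}\langle\cdot,\cdot\rangle$ on $M^{(1)}_{\lambda-\beta_1}$ is nondegenerate. This is exactly the input the paper imports from Kac--Kazhdan; it cannot be read off from the determinant.
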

\noindent 
It was showed by analyzing the determinant of Shapovalov form on $M(\lambda)$.
\cite{Kac--Kazhdan} asked for construction of maximal vectors even in solution spaces $\mu=\lambda-\beta_1$ for $\beta_1\in \Delta^+$; it is our $3^{\text{rd}}$ motivation.

The problem of explicitly constructing maximal vectors is generally seen to be hard.
The seminal work \cite[Section 3]{Malikov} (in rank-2 $\widehat{A_1}$-type) constructed them in $M(\lambda)$ in affine cases, by enlarging $M(\lambda)$ to accommodate complex powers of $f_i$'s.
Maximal vectors continue to attract attention till date (\cite{Sale}). 
Hereafter, unless otherwise specified, we work over $\mathfrak{g}\big(A(b,a,c,d)\big)$; so $\mathcal{I}=\{1,2\}$.
\begin{question}\label{Question strict inequality?}
In Proposition \ref{Prop KK}, is there a pair of h.w. $\lambda\in \mathfrak{h}^*$ and non-minimal solution $\mu=\lambda-\beta_1$ to \eqref{Eqn dot action by any positive root} for $\beta_1\in \Delta^+$ (with $r$ as therein), for which strict inequality in \eqref{maxl vect count inequality} arises? That is,
    \[
    r\ \ >\ \ \sum_{t\in \mathbb{N}} \dim \mathfrak{g}_{\big(\frac{1}{t}\beta_1\big)}?
    \]
   \end{question}
   \begin{remark}\label{Remark rank-2 advantages}
1) To prune for positive answers to this in infinite types, it might be reasonable to start in rank-2 and with imaginary $\beta_1\in \Delta^+$.
The points below Lemma 6.1 in \cite{T-P} noted in some rank-2 cases (H) and (R) therein, that the presence of a real or a Heisenberg simple root leads to infinitely many number of solutions.
In contrast, \cite[Lemma 6.2]{T-P} showed for $\lambda\in P^{\pm}$ over $\mathfrak{g}\big(A(b,a,c,d)\big)$, that solutions are always finitely many.
For this simplicity and by point 2) below, we concentrated on  setting \eqref{Eqn defn cartan matrix A(a,b,c,d)} for $\lambda\in P^{\pm}$; for goals P1) and P2) and for resolving Question \ref{Question strict inequality?}. \smallskip\\ 
2) In case \eqref{Eqn defn cartan matrix A(a,b,c,d)}, any sum of $\alpha_1$ and $\alpha_2$  over positive integers is a root, as needed in Question \ref{Question strict inequality?}.\smallskip\\
3) The {\it Casimir operator's} action yields tractable necessity condition of maximal weights $\mu$ in $M(\lambda)$,
\begin{center}
namely, \ the {\it norm equality }\qquad  $(\lambda+\rho \ , \ \lambda+\rho)\  = \  (\mu+\rho\ , \ \mu+\rho )$.
\end{center}
In setting \eqref{Eqn defn cartan matrix A(a,b,c,d)}, it is equivalent to \eqref{Eqn dot action by any positive root}, and simplifies (by the proof of \cite[Lemma 6.1]{T-P}) as:
\begin{equation}  \label{Eqn norm equality condition for general mu= (X,Y) in Case (N)}
    b X^2\ + \ \frac{ad}{c}Y^2\ - \  bM_1 X\ - \ \frac{ad}{c}M_2Y \ +\ 2aXY\ = \ 0.
            \end{equation}
            \end{remark}
            The non-coincidence of solutions of norm-equality and  of \eqref{Eqn dot action by any positive root} was showed for $\lambda=\rho\in P^{\pm}$  over $\mathfrak{g}\big(A(n)\big)$, by \cite[Theorem D, and Observations 7.3 and 7.5]{T-P}; which was not known previously. 

\cite[Proposition 3.7(c)]{T-P} considered for warm-up, $\lambda\in P^{\pm}$ and \eqref{Eqn dot action by any positive root} with solutions  $\mu=\lambda-2\alpha_1-2\alpha_2$ or $\mu= \lambda-n\alpha_1-\alpha_2$ and (or $\mu= \lambda-\alpha_1-n\alpha_2$) for $n\in \mathbb{N}$.
The $r$-values for those choices of $\mu$'s were found to be 2 and 1 respectively; which showed equality in \eqref{maxl vect count inequality}.
Following it, \cite[Lemma 6.6 Figs. (A)--(D)]{T-P} characterized cases where $(2,2)$ is minimal or is non-minimal.
We now work along next natural lines of solutions $\mu$'s which involve each of $\alpha_1$ and $\alpha_2$ at least twice.
\begin{thmx}\label{Theorem maximal vectors}
 Let $\mathfrak{g}=\mathfrak{g}\big(A(b,a,c,d)\big)$ as in Setting \eqref{Eqn defn cartan matrix A(a,b,c,d)}.
   Fix any $\lambda\in P^{\pm}$ with powers $M_i$, where $\lambda(\alpha_i^{\vee})= \frac{A_{ii}}{2}(M_i-1)$ for $i=1,2$ (by Lemma \ref{Lemma rank-1 BKM rep. theory}).
   Fix a solution $\mu= \lambda-X\alpha_1-Y\alpha_2$ to \eqref{Eqn norm equality condition for general mu= (X,Y) in Case (N)}.
    \begin{itemize}
\item[(A1)] If $(X,Y)=(2,n)$ or $(n,2)$, the count of linearly independent maximal vectors in $M(\lambda)_{\mu}$ is 
    \begin{center}
    $\dim \mathfrak{g}_{2\alpha_1+n\alpha_2}+\dim \mathfrak{g}_{\alpha_1+\frac{n}{2}\alpha_2}\ \ = \ \ \bigg\lceil \displaystyle \frac{n+1}{2} \bigg\rceil \ \ = \ \ \dim \big[\mathrm{Hom}_{\mathfrak{g}}\big(M(\lambda- 2\alpha_1-n\alpha_2),\ M(\lambda) \big)\big]$.
    \end{center}
    \item[(A2)] If $(X,Y)=(3,3)$, $M(\lambda)_{\mu}$ has $\dim \mathfrak{g}_{\alpha_1+\alpha_2}$ $+\dim \mathfrak{g}_{3\alpha_1+3\alpha_2}=4$ many linearly independent maximal vectors.
    (See Example \ref{Note non-minimal mu's r-equality}.)
    \end{itemize}
\end{thmx}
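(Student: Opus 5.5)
We work directly in $M(\lambda)_{\mu}$ with an explicit PBW-type spanning set and impose $e_1\cdot v=e_2\cdot v=0$. In rank 2 with all entries of $A(b,a,c,d)$ negative, $\mathfrak{n}^-$ is the free Lie algebra on $f_1,f_2$; there are no Serre relations, since all $A_{ij}<0$ and no simple root is real. Hence $U(\mathfrak{n}^-)$ is the free associative algebra on $f_1,f_2$. So $M(\lambda)_{\lambda-X\alpha_1-Y\alpha_2}$ has basis the words in $f_1,f_2$ with $X$ letters $f_1$ and $Y$ letters $f_2$, applied to $m_\lambda$; its dimension is $\binom{X+Y}{X}$.

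The maximality conditions are computed from two facts. First, $e_i$ commutes with $f_j$ for $i\neq j$. Second, $e_if_i^{k}m$ is obtained from the rank-one formula of Lemma~\ref{Lemma rank-1 BKM rep. theory}, generalized to a word: $e_i$ applied to a word $w$ gives the sum over the positions of $f_i$ in $w$ of the word with that letter removed, times the scalar $(\text{weight to the right of that position})(\alpha_i^\vee) - \tfrac{A_{ii}}{2}(\#f_i\text{ to the right})\cdot$ (an appropriate correction). Concretely, $e_i$ acts on the free algebra as a twisted derivation, with scalars that are affine in $M_1,M_2$ and in the entries $a,b,c,d$. Both $e_1,e_2$ therefore map the space $W_{X,Y}$ of words into $W_{X-1,Y}\oplus W_{X,Y-1}$. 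The maximal vectors form the kernel of an explicit matrix $E=(E_1;E_2)$.

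For (A1), take $(X,Y)=(2,n)$; the case $(n,2)$ is symmetric. A word is determined by the positions of its two $f_1$'s among $n+2$ slots, equivalently by a triple $(p,q,r)$ with $p+q+r=n$ counting the $f_2$'s before, between and after the $f_1$'s. The operator $E_2$ acts by lowering one of $p,q,r$, with coefficients from the $\mathfrak{sl}_2$-type computation along each block of $f_2$'s. The operator $E_1$ maps to words with a single $f_1$, indexed by the split $(s,n-s)$. I would first solve $E_2v=0$. Using \eqref{Eqn norm equality condition for general mu= (X,Y) in Case (N)} with $X=2$ to express $M_1$ in terms of $M_2,n$, I expect the kernel of $E_2$ to be parametrized by the middle exponent $q$ together with a symmetry $p\leftrightarrow r$. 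The remaining constraint $E_1v=0$ should cut this down to a space of dimension $\lceil (n+1)/2\rceil$.

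On the other side, I would use the Witt dimension formula for the free Lie algebra on two generators to compute $\dim\mathfrak{g}_{2\alpha_1+n\alpha_2}=\lfloor (n+1)/2\rfloor$. Adding $\dim\mathfrak{g}_{\alpha_1+\frac n2\alpha_2}$, which equals $1$ exactly when $n$ is even, gives $\lceil (n+1)/2\rceil$. Finally, Verma homomorphisms from $M(\mu)$ to $M(\lambda)$ correspond bijectively to maximal vectors of weight $\mu$, because $M(\mu)$ is free over $U(\mathfrak{n}^-)$. This gives the third equality. Proposition~\ref{Prop KK} supplies the lower bound $r\ge\lceil (n+1)/2\rceil$, so only the upper bound needs the rank computation.

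For (A2), take $(X,Y)=(3,3)$. Here $\dim W_{3,3}=20$, and $\dim\mathfrak{g}_{3\alpha_1+3\alpha_2}=3$ by Witt's formula; together with $\dim\mathfrak{g}_{\alpha_1+\alpha_2}=1$ this gives the lower bound $4$ by Proposition~\ref{Prop KK}. For the upper bound, I would substitute the norm equation with $X=Y=3$ to eliminate one of $M_1,M_2$. I would then show that the $30\times 20$ matrix $(E_1;E_2)$, mapping into $W_{2,3}\oplus W_{3,2}$, has rank at least $16$. I would do this by exhibiting a triangular minor with respect to a lexicographic order on words: the leading term of $E_i$ on a word removes its leftmost $f_i$, with a coefficient that is generically nonzero.

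The main obstacle is the nonvanishing of these pivot coefficients on the whole solution locus. They are affine expressions that could vanish for special $(M_1,M_2,a,b,c,d)$, and that is exactly where the strict inequality of Question~\ref{Question strict inequality?} could occur. I would first check via the norm equation that each pivot is a positive combination of $b,d,a,M_i$, and handle any residual degenerate cases separately, e.g.\ as in Example~\ref{Note non-minimal mu's r-equality}.
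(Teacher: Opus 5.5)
You have the paper's framework right. The lower bounds $r\ge\lceil\frac{n+1}{2}\rceil$ and $r\ge 4$ come from Proposition~\ref{Prop KK}. Your Witt-formula counts are correct, and so is the identification of $\mathrm{Hom}_{\mathfrak g}(M(\mu),M(\lambda))$ with maximal vectors. The free-algebra word basis is also a legitimate, even cleaner, setting. There the action is simply $e_i\,x_1\cdots x_N m_\lambda=\sum_{j:\,x_j=f_i}\langle\lambda-\mathrm{wt}(x_{j+1}\cdots x_N),\alpha_i^\vee\rangle\,x_1\cdots\widehat{x_j}\cdots x_N m_\lambda$, with no correction term.

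The gap is the upper bound, which is the whole content of the theorem; you never establish it. For (A1) you only record what you ``expect'', and the guesses are off. $E_2$ involves only $M_2$, so eliminating $M_1$ does nothing there. There is also no $p\leftrightarrow r$ symmetry, since lowering $p$ in $f_2^pf_1f_2^qf_1f_2^rm_\lambda$ carries the extra weight shift $2c+(q+r)d$.

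For (A2) the matrix is $20\times 20$, since $\dim W_{2,3}=\dim W_{3,2}=10$, and your triangular minor fails. By the formula above, every entry of $E_1$ (resp. $E_2$) that is not identically zero is affine in $M_1$ (resp. $M_2$) with nonzero slope. So every pivot of every minor can vanish, and on the solution locus it does. Over $A(2)$, the point $(3,3)$ solves \eqref{Eqn norm equality condition for general mu= (X,Y) in Case (N)} iff $M_1+M_2=9$. Take $(M_1,M_2)=(8,1)$ and any of the six words $f_1f_2w'\in W_{3,3}$. The coefficient with which $e_1$ deletes the leading $f_1$ is $-\frac b2(M_1-1)+2b+3a=8-M_1=0$.

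So your proposed positivity check is false. ``Handling degenerate cases separately'' is not an argument either: the theorem asserts equality at infinitely many parameter points. A generic rank bound says nothing at special points, since rank is only lower semicontinuous. Example~\ref{Note non-minimal mu's r-equality} does not help here; it only illustrates non-minimality.

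The missing idea is a basis in which the pivots do not depend on $\lambda$ at all. The paper uses PBW monomials on Lyndon words. For weight $(2,n)$ these are $\mathrm{ad}_{f_2}^{\circ k}(f_1)\,\mathrm{ad}_{f_2}^{\circ s}(f_1)f_2^{n-k-s}m_\lambda$ for $k\ge s$, and $[\mathrm{ad}_{f_2}^{\circ k}(f_1),\mathrm{ad}_{f_2}^{\circ s}(f_1)]f_2^{n-k-s}m_\lambda$ for $k<s$. For $(3,3)$ it uses the twenty listed monomials. In this basis $e_1,e_2$ act on the Lie-word factors through structure constants such as $k(c+\frac{k-1}{2}d)$, $a$, $ka+b$, $2a+b$, $3(a+b)$. $\lambda$ enters only through the trailing powers of $f_2$ or $f_1$ (the paper's $B(k)$ and $C(k)$).

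The paper then fixes free coordinates: $d_{j,j}$ for $0\le j\le\lfloor\frac n2\rfloor$, resp. $X_{20},X_{18},X_{15},X_{12}$. Every other coordinate is solved for from a subset of the equations. Only those positive constants, or $2\times 2$ determinants such as $-a\,k(c+\frac{k-1}{2}d)-(ka+b)c<0$, are ever inverted. The $\lambda$-dependent coefficients are only moved to the right-hand side.

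This gives the upper bound uniformly on the whole locus. The norm equation is needed only to detect the redundant equations: \eqref{Eqn length 2 M1} coincides with \eqref{Eqn length 2 M2}, and \eqref{Eqn 3,3 (10)} is proportional to \eqref{Eqn 3,3 (20)}. Without such a $\lambda$-independent triangular structure, your outline does not reach the stated equality.
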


Above $\lceil \cdot \rceil: \mathbb{R} \rightarrow \mathbb{Z}$ is the lowest integer function.
We prove Theorem \ref{Theorem maximal vectors}(A1) in Section \ref{Section proof of (A1)}, by performing raising operator actions explicitly.
We do so on images of {\it Poincar\'{e}--Birkoff--Witt monomials} on {\it Lyndon root basis vectors} in $M(\lambda)$ (by Remark \ref{remark Lyndon words}). 
We then use a graphical interpretation of resulting linear equations (for maximality of sums of those ``monomials''), to build their solutions.
The proof of (A2) in Section \ref{Section proof of (A2)} also proceeds in a similar way.
We note that the result in the theorem uniformly holds true over all $\mathfrak{g}\big(A(b,a,c,d)\big)$ for all $a,b,c,d \in \mathbb{N}$.
\begin{example}\label{Note non-minimal mu's r-equality}
When $(M_1, M_2, b,a,c,d)= (1,7,1,1,1,2)$, \eqref{Eqn norm equality condition for general mu= (X,Y) in Case (N)} has 3 interior solutions: $(3,1),\ (3,3),\\ (1,6)$.
 So $(3,3)$ is not a minimal solution therein.
 Nevertheless, (A2) shows equality in \eqref{maxl vect count inequality}.
\end{example}
We go back to original problem of characters of $L(\lambda)$ for $\lambda\in P^{\pm}$. 
Initial cases of Verma's, where they are close to their simples and when those simples have simpler descriptions, are when \eqref{Eqn dot action by any positive root} has no  or exactly one solution in whole $\lambda$-shifted root-cone $\lambda- \mathbb{N}\Pi = \lambda- (\mathbb{N}\{\alpha_1\}\oplus\mathbb{N}\{\alpha_2\}) =\wt M(\lambda)$ $\setminus\{\lambda- n\alpha_1,\ \lambda-m\alpha_2\ |\ n,m\geq 0\}$.
In former cases in them $M(\lambda)$ is simple, and we recall a crucial result for the latter case:
\begin{lemma}[{\cite[Lemma 3.1]{Kac--Kazhdan}}]\label{Lemma KK unique sol}
Fix any $\lambda\in \mathfrak{h}^*$.
Suppose \eqref{Eqn dot action by any positive root} has unique solution $\mu = \lambda- \beta_1$ (for $\beta_1\in \Delta^+$) strictly below $\lambda$.
Every submodule of $M(\lambda)$ is isomorphic to $M(\lambda-\beta_1)\oplus \cdots \oplus M(\lambda-\beta_1)$.
\end{lemma}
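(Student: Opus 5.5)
The plan is to take an arbitrary nonzero submodule $N\subseteq M(\lambda)$ and do three things: locate all of its "primitive" weights using the Kac--Kazhdan criterion, show that $N$ is generated by its $(\lambda-\beta_1)$-weight space, and show that the resulting sum of Verma images is direct. Throughout I use the full form of the Kac--Kazhdan theorem behind \eqref{Eqn dot action by any positive root}. It says that $L(\nu)$ is a subquotient of $M(\lambda)$ if and only if $\nu$ is reached from $\lambda$ by a chain $\nu=\lambda-\sum_t n_t\beta_t$ in which every partial step satisfies \eqref{Eqn dot action by any positive root}. I read the hypothesis "unique solution strictly below $\lambda$" as saying that $\lambda-\beta_1$ is the only weight $\nu\precneqq\lambda$ obtainable in this way. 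This holds because each intermediate partial sum of such a chain is again a one-step solution from $\lambda$ or from a solution, so any chain of length at least two would produce a second solution.

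\textbf{Step 1 (locating the primitive weights).} Since $\wt N\subseteq \lambda-\mathbb{Z}_{\geq 0}\Pi$ and only finitely many weights lie above any given one, $N$ has a weight $\nu$ that is maximal in $\wt N$. Every nonzero vector of $N_\nu$ is killed by $\mathfrak{n}^+$, so it is a maximal vector of $M(\lambda)$. If $N$ is proper then $\nu\neq\lambda$, and therefore $\nu=\lambda-\beta_1$. More generally, every highest weight of a subquotient of $N$ equals $\lambda-\beta_1$, because such a weight is a subquotient highest weight of $M(\lambda)$ lying strictly below $\lambda$.

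\textbf{Step 2 (generation).} Set $W:=N_{\lambda-\beta_1}$ and choose a basis $v_1,\dots,v_r$ of $W$. Since $N$ has no weights above $\lambda-\beta_1$, each $v_i$ is a maximal vector. Put $N':=\sum_i U(\mathfrak{g})v_i=\sum_i U(\mathfrak{n}^-)v_i$. If $N/N'\neq 0$, then by Step 1 its maximal weights equal $\lambda-\beta_1$. But $N'_{\lambda-\beta_1}=W=N_{\lambda-\beta_1}$, so $(N/N')_{\lambda-\beta_1}=0$, a contradiction. Hence $N=N'$. Moreover, $M(\lambda)$ is free of rank one over the domain $U(\mathfrak{n}^-)$, so each map $M(\lambda-\beta_1)\to U(\mathfrak{g})v_i$ is injective.

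\textbf{Step 3 (directness; the main obstacle).} The difficulty is that $U(\mathfrak{n}^-)$ is noncommutative, so it is not clear that the left ideals $U(\mathfrak{n}^-)x_i$ (where $v_i=x_im_\lambda$) have zero pairwise intersection. To get around this, I would consider the surjection $\pi:\bigoplus_{i=1}^r M(\lambda-\beta_1)\twoheadrightarrow N$ and its kernel $K$. On the $(\lambda-\beta_1)$-weight space, $\pi$ is an isomorphism onto $W$, so if $K\neq0$ then a maximal weight $\nu$ of $K$ satisfies $\nu\precneqq\lambda-\beta_1$. Then $L(\nu)$ is a subquotient of some $M(\lambda-\beta_1)$, so by Kac--Kazhdan there is a chain of solutions of \eqref{Eqn dot action by any positive root} from $\lambda-\beta_1$ down to $\nu$. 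Prepending the step $\lambda\to\lambda-\beta_1$ gives a valid chain from $\lambda$ to $\nu$. By the converse direction of the Kac--Kazhdan theorem, $L(\nu)$ is then a subquotient of $M(\lambda)$ with $\nu\neq\lambda-\beta_1$, which contradicts uniqueness. Hence $K=0$ and $N\cong M(\lambda-\beta_1)^{\oplus r}$.

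The step needing the most care is this last one: checking that concatenating the two chains genuinely yields a chain from $\lambda$ in the sense of \eqref{Eqn dot action by any positive root}, and that the hypothesis rules out every such endpoint other than $\lambda-\beta_1$. The concatenation works because the equation at each step depends only on the current partial sum $\lambda-\sum n_t\beta_t$, so the chain from $\lambda-\beta_1$ continues the step from $\lambda$ without change.
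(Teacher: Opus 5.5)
Your proof is correct. The paper gives no proof of this lemma; it is imported from Kac--Kazhdan. Your three steps do, however, follow the pattern the paper runs in compressed form at the end of the proof of Theorem~\ref{Theorem composition series}. There, a proper submodule is generated by its maximal vectors at the unique solution weight, and it is a direct sum of copies of the Verma module at that weight, which is simple because no solution lies below it. Your version supplies what that sketch omits: the generation argument through $N/N'$, and the injectivity of $M(\lambda-\beta_1)\to U(\mathfrak{g})v_i$, which holds because $U(\mathfrak{n}^-)$ is a domain.

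First remark: the sentence justifying your reading of the hypothesis is circular as written. The endpoint of a two-step chain solves \eqref{Eqn dot action by any positive root} from $\lambda-\beta_1$, not from $\lambda$. So calling it a ``second solution'' already presupposes the chain reading. In the paper's setting $A(b,a,c,d)$ the two readings genuinely coincide, and you should say why:
\begin{itemize}
\item each step of a chain preserves $(\,\cdot+\rho,\,\cdot+\rho)$;
\item every nonzero $\gamma\in\mathbb{Z}_{\geq 0}\Pi$ can be written as $m\gamma'$ with $m\in\mathbb{N}$ and $\gamma'\in\Delta^+$;
\item hence for a chain endpoint $\lambda-\gamma$ the norm equality reads $m\big(2(\lambda+\rho,\gamma')-m(\gamma',\gamma')\big)=0$, so $\lambda-\gamma$ is a one-step solution from $\lambda$.
\end{itemize}
The same Casimir/norm argument also shows that the weight of any maximal vector in any subquotient of $M(\lambda)$ is a solution. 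That is all Steps 1--3 actually use. So in rank 2 you do not need the full Kac--Kazhdan subquotient theorem; this is exactly how the paper handles the quotient $V$ in the proof of Theorem~\ref{Theorem composition series}. For a general symmetrizable algebra you do need that theorem, and the chain reading has to be taken as the meaning of the hypothesis. It does not follow from uniqueness of one-step solutions by the argument you give.

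Second remark: Step 3 is not really an obstacle, and it can be freed of both the concatenation and the converse direction. Step 2 gives an embedding $M(\lambda-\beta_1)\hookrightarrow M(\lambda)$, $u\,m_{\lambda-\beta_1}\mapsto u x_1 m_\lambda$. Under it, a maximal vector of $M(\lambda-\beta_1)$ of weight $\nu\precneqq\lambda-\beta_1$ would become a maximal vector of $M(\lambda)$ of weight $\nu$, contradicting uniqueness. Hence $M(\lambda-\beta_1)$ is simple, and $\bigoplus_{i=1}^r M(\lambda-\beta_1)$ is semisimple. Its quotient $N$ has $r$-dimensional top weight space, so $N\cong M(\lambda-\beta_1)^{\oplus r}$ directly.
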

\noindent
It might be interesting to find examples of $\lambda$'s in Lemma \ref{Lemma KK unique sol}, which was seemingly unexplored.

Now let $\lambda\in P^{\pm}$ (which is ``integrable''in the sense of completeness by Lemma \ref{Lemma rank-1 BKM rep. theory} and Remark \ref{Remark all 1-dim maxl vect}), with associated powers $M_1 \text{ and } M_2$. Then:\smallskip\\
{\bf Note.} \eqref{Eqn dot action by any positive root} has at least three solutions $\lambda$, $\lambda-M_1\alpha_1$ and $\lambda-M_2\alpha_2$ when $\lambda \in P^{\pm}$.\smallskip\\
When \eqref{Eqn dot action by any positive root} has no solution other than the above three, $L^{\max}(\lambda)$ is simple; its proof proceeds similar to that of simplicity of $M(\lambda')$ when \eqref{Eqn dot action by any positive root} for $\lambda'$ has no non-trivial solution.
On the very next layer, we see the case of \eqref{Eqn dot action by any positive root} (for $\lambda\in P^{\pm}$ presently) with unique solution from interior $\lambda- \mathbb{N}\Pi$; namely, unique solution $(X,Y)$ to \eqref{Eqn norm equality condition for general mu= (X,Y) in Case (N)} with $X,Y>0$.
We now resolve:
\begin{question}\label{Question unique interior sol}
    In spirit of Lemma \ref{Lemma KK unique sol}, for $\lambda\in P^{\pm}$, when \eqref{Eqn dot action by any positive root} has a unique solution in the interior $\lambda- \mathbb{N}\Pi$, what is the structure of $M(\lambda)$?
    And what is the character of $L(\lambda)$?
\end{question}
\begin{remark}\label{Observation minimal unique sol in interior}
    In the above problem, $\mu= \lambda-\beta_1$ is not minimal, iff either $M_1\alpha_1\precneqq \beta_1$ or $M_2\alpha_2\precneqq \beta_1$.
    In $A(2)$ case, \cite[proposition 3.10]{T-P} conducted a number-theoretic characterization of h.w.s $\lambda\in P^{\pm}$ with unique solution in $\mathbb{N}\Pi$ to \eqref{Eqn norm equality condition for general mu= (X,Y) in Case (N)}; also in view of the shortcoming noted below Lemma \ref{Lemma KK unique sol}.
    It found over $A(2)$ that the minimal solutions are $\big(\frac{2}{3}M_1, \frac{2}{3}M_2 \big)$, which occur if and only if $M_1=M_2$.
    And non-minimal solutions therein have the form $(M_1,\ M_2-M_1)$ or $(M_1-M_2, \ M_2)$.
    More generally, $(M_1, n)$ is a non-minimal solution to \eqref{Eqn norm equality condition for general mu= (X,Y) in Case (N)} if and only if $n=M_2-\frac{2c}{d}M_1\in \mathbb{N}$.
\end{remark}
Our next result extends the phenomenon in Lemma \ref{Lemma KK unique sol}, resolving Question \ref{Question unique interior sol}.
It also strengthens the analysis built in \cite[Observation 3.9 and Proposition 3.10, and the discussions surrounding them]{T-P}.
Moreover, we obtain --- in spirit of WKB character type formulas of all $V\twoheadleftarrow M(\rho)$ in \cite[Theorem D]{T-P} --- character formulas for all h.w.m.s $L(\lambda) \twoheadleftarrow V\twoheadleftarrow M(\lambda)$, in the cases we considered.
\begin{thmx}\label{Theorem composition series}
Fix $\mathfrak{g}\big(A(b,a,c,d)\big)$,  $a,..,d\in \mathbb{N}$, and $\lambda\in P^{\pm}$ with powers $M_1,M_2$. 
Suppose $n= M_2-\frac{2c}{d}M_1\in \mathbb{N}$; it holds for $A(2), A(b, a, c ,2c),...$.
And suppose $(M_1,n)$ is the unique solution to \eqref{Eqn norm equality condition for general mu= (X,Y) in Case (N)} in the interior $\mathbb{N}\times \mathbb{N}$.
Then $r= \dim \mathrm{Hom}_{\mathfrak{g}}\big[ M\big(\lambda-M_1\alpha_1- n\alpha_2 \big), \ M(\lambda)\big]\geq 1$. 
Moreover:\begin{itemize}
\item[(a)] $U(\mathfrak{n}^-) f_2^{M_2} M(\lambda)_{\lambda} \underset{\mathfrak{g}\text{-mod}}{\simeq} L(\lambda-M_2\alpha_2)$ and $U(\mathfrak{n}^-)f_2^n f_1^{M_1}M(\lambda)_{\lambda}\underset{\mathfrak{g}\text{-mod}}{\simeq }L(\lambda-M_1\alpha_1-n\alpha_2)$. \\ 
$0 \rightarrow  U(\mathfrak{n}^-) f_2^n f_1^{M_1} M(\lambda)_{\lambda}\rightarrow M(\lambda-M_1\alpha_1)= U(\mathfrak{n}^-) f_1^{M_1} M(\lambda)_{\lambda}\rightarrow  L(\lambda-M_1\alpha_1)  \rightarrow 0$ is an exact sequence of $\mathfrak{g}$-modules. 
\item[(b)] $M(\lambda)$ has a unique submodule $N (\lambda) \underset{ \mathfrak{g}\text{-mod}}{\simeq} L\big(\lambda  -M_1\alpha_1- n\alpha_2\big)^{\oplus \ r-1}
\underset{ \mathfrak{g}\text{-mod}}{\simeq}\ M\big(\lambda  -M_1\alpha_1- n\alpha_2\big)^{\oplus \ r-1}$, \quad with \ \  $L(\lambda)\ = \ \frac{M(\lambda)}{\big\langle f_1^{M_1}M(\lambda)_{\lambda}\big\rangle \oplus \big\langle f_2^{M_2}M(\lambda)_{\lambda}\big\rangle \oplus N(\lambda) }$.
\item[(c)] There are totally $6r$ many non-isomorphic h.w.m.s with top weight $\lambda$.
Furthermore, for an exact sequence $0\rightarrow N_V \rightarrow M(\lambda)\rightarrow V\rightarrow 0$, we have the exactness of\\ $N_V\cap \Big(N(\lambda)\oplus \big\langle f_2^n f_1^{M_1} M(\lambda)_{\lambda}\big\rangle \oplus  \big\langle f_2^{M_2}M(\lambda)_{
\lambda}\big\rangle\Big)\ \xhookrightarrow{} \ N_V\    \twoheadrightarrow \ \frac{N_V \cap \big\langle f_1^{M_1}M(\lambda)_{\lambda}\big\rangle}{N_V\cap \big\langle f_2^n f_1^{M_1} M(\lambda)_{\lambda}\big\rangle}$.
\item[(d)] The WKB type character formula of $L(\lambda)$ is 
\[
\mathrm{char} L(\lambda) \quad = \quad \frac{e^{\lambda}-e^{\lambda-M_1\alpha_2}-e^{\lambda-M_2\alpha_2}-(r-1)e^{\lambda-M_1\alpha_1-n\alpha_2}}{\prod\limits_{\alpha \in \Delta^+}
(1 -e^{-\alpha})^{\dim(\mathfrak{g}_{\alpha})}}.
\]
More generally, the (distinct) numerators of the WKB type character formulas of all the $6r$ many non-isomorphic quotients of $M(\lambda)$ are: 
\[
e^{\lambda}-le^{\lambda-M_1\alpha_2}-ie^{\lambda-M_2\alpha_2}-(j+k)e^{\lambda-M_1\alpha_1-n\alpha_2}\ \ \text{ for }l,i,j\in \{0,1\},\ l\leq j, \  \text{ and }k\in \{0,\ldots, r-1\}.
\]
\end{itemize}
\end{thmx}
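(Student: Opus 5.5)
The plan is to run the Kac--Kazhdan filtration argument behind Lemma \ref{Lemma KK unique sol}, but relative to the three "trivial" solutions $\lambda-M_1\alpha_1$ and $\lambda-M_2\alpha_2$ together with the single interior solution $\mu_0:=\lambda-M_1\alpha_1-n\alpha_2$. First I would check by direct substitution in \eqref{Eqn norm equality condition for general mu= (X,Y) in Case (N)}, using $n=M_2-\frac{2c}{d}M_1$, that $(M_1,n)$ is a solution. Then I would classify the solutions of \eqref{Eqn dot action by any positive root} for the shifted highest weights $\lambda-M_2\alpha_2$, $\lambda-M_1\alpha_1$ and $\mu_0$. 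Every maximal weight of $M(\lambda-M_i\alpha_i)$ or of $M(\mu_0)$ is also a maximal weight of $M(\lambda)$, so by the norm equality it must be one of $(M_1,0)$, $(0,M_2)$, $(M_1,n)$. This gives the following picture.

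\begin{itemize}
\item[(i)] $M(\mu_0)$ has no proper solution, so it is simple, i.e. $M(\mu_0)=L(\mu_0)$.
\item[(ii)] $M(\lambda-M_2\alpha_2)$ has no solution below it, since $(M_1,n)\not\succeq(0,M_2)$ because $n<M_2$. Hence it is simple.
\item[(iii)] $M(\lambda-M_1\alpha_1)$ has exactly one solution below it, namely $\mu_0=(\lambda-M_1\alpha_1)-n\alpha_2$. By Lemma \ref{Lemma rank-1 BKM rep. theory} (the $\alpha_2$-string: one checks $(\lambda-M_1\alpha_1)(\alpha_2^\vee)=\frac{-d}{2}(n-1)$) this solution is realised by the monomial vector $f_2^nf_1^{M_1}m_\lambda$. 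Lemma \ref{Lemma KK unique sol} then says every submodule of $M(\lambda-M_1\alpha_1)$ is a sum of copies of $M(\mu_0)$; since the $\mu_0$-weight space of $M(\lambda-M_1\alpha_1)$ has a one-dimensional maximal space, there is exactly one such copy.
\end{itemize}

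This yields (a): the stated isomorphisms and the short exact sequence, with quotient $L(\lambda-M_1\alpha_1)$.

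For $r$ and (b), I would first note $r\ge1$ because $f_2^nf_1^{M_1}m_\lambda$ is maximal. Then I would take the space $R$ of maximal vectors in $M(\lambda)_{\mu_0}$, of dimension $r$, and set $N(\lambda)$ to be generated by a complement of $\mathbb{C}f_2^nf_1^{M_1}m_\lambda$ in $R$. Each maximal vector generates a Verma submodule because $U(\mathfrak n^-)$ is a domain, and each such Verma is simple by (i). A vector $v$ that is maximal in $M(\lambda)_{\mu_0}$ and lies in $\langle f_1^{M_1}m_\lambda\rangle+\langle f_2^{M_2}m_\lambda\rangle$ must lie in $\mathbb{C}f_2^nf_1^{M_1}m_\lambda$. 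Indeed, $\langle f_2^{M_2}m_\lambda\rangle$ has no weight $\mu_0$ (again because $n<M_2$), and inside $M(\lambda-M_1\alpha_1)$ the $\mu_0$-maximal space is one-dimensional. Also $\langle f_1^{M_1}m_\lambda\rangle\cap\langle f_2^{M_2}m_\lambda\rangle=0$, since any nonzero intersection would contain a maximal vector of weight below both, and no such solution exists. Together these give the direct sum $\langle f_1^{M_1}\rangle\oplus\langle f_2^{M_2}\rangle\oplus N(\lambda)$. To see that this sum is the maximal submodule, I would use the Jantzen filtration / Shapovalov determinant argument of \cite{Kac--Kazhdan}: every submodule is generated by its maximal vectors, because the only possible highest weights of subquotients are the three solution weights. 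Uniqueness of $N(\lambda)$ I would treat as meaning uniqueness up to the choice of complement, with the isomorphism class determined. The equality $r=\dim\mathrm{Hom}(M(\mu_0),M(\lambda))$ is then immediate, since homomorphisms from a Verma module correspond to maximal vectors.

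For (c) and (d), every submodule $N_V$ decomposes according to the lattice just described. Its components are a subspace of $R$, which up to isomorphism is determined by its dimension $k'\in\{0,\dots,r\}$; plus a choice of whether $f_1^{M_1}m_\lambda$ lies in $N_V$, with $l\in\{0,1\}$; plus whether $f_2^{M_2}m_\lambda$ lies in $N_V$, with $i\in\{0,1\}$. The constraint is that $l=1$ forces the copy $f_2^nf_1^{M_1}m_\lambda$ into $N_V$. The stated exact sequence in (c) records exactly this, the quotient being $0$ or $L(\lambda-M_1\alpha_1)$. Counting the pairs $(l,j)$ with $l\le j$ gives $3$ choices, so together with $i$ and $k$ we get $2\cdot3\cdot r=6r$ classes. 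The characters come from $\mathrm{char}M(\nu)=e^\nu/\prod(1-e^{-\alpha})^{\dim\mathfrak g_\alpha}$ combined with (a) and (b); the simple $L(\lambda-M_1\alpha_1)$ contributes $e^{\lambda-M_1\alpha_1}-e^{\mu_0}$.

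The main obstacle is the structural step asserting that every submodule of $M(\lambda)$ is generated by maximal vectors lying in these three weight spaces. The danger is subsingular vectors of weight $\mu_0$ that are not maximal, or non-split extensions. I would first try to adapt the proof of \cite[Lemma 3.1]{Kac--Kazhdan}: take any submodule, consider its maximal weight components, and use that every composition factor $L(\nu)$ of $M(\lambda)$ has $\nu$ a solution. Then I would argue inductively on weight, using the simplicity of $M(\mu_0)$ and $M(\lambda-M_2\alpha_2)$, to split off the pieces.
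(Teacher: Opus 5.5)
Your treatment of (a) and of most of (b) is sound. This covers the simplicity of $M(\mu_0)$ and $M(\lambda-M_2\alpha_2)$ for $\mu_0:=\lambda-M_1\alpha_1-n\alpha_2$, and the structure of $M(\lambda-M_1\alpha_1)$, where the one-dimensionality of its $\mu_0$-weight space does the work. It also covers the directness of $S:=\langle f_1^{M_1}m_\lambda\rangle\oplus\langle f_2^{M_2}m_\lambda\rangle\oplus N(\lambda)$.

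\textbf{The gap.} It is the step you flag and then try to settle by Casimir/Kac--Kazhdan reasoning: that $S$ is the maximal submodule. Knowing that every composition factor has highest weight in $\{\lambda,\lambda-M_1\alpha_1,\lambda-M_2\alpha_2,\mu_0\}$ does not make submodules generated by maximal vectors of $M(\lambda)$. Concretely, $M(\lambda)/S$ is simple iff no $w\in M(\lambda)_{\mu_0}\setminus S$ has $e_1w,e_2w\in S$. Now $S_{\mu_0+\alpha_1}=0$, but $S_{\mu_0+\alpha_2}=\mathbb{C}f_2^{n-1}f_1^{M_1}m_\lambda\neq 0$. So the real question is whether some $w$ satisfies $e_1w=0$ and $e_2w=t\,f_2^{n-1}f_1^{M_1}m_\lambda$ with $t\neq 0$. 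Such a $w$ is a subsingular vector: it would put an extra copy of $L(\mu_0)$ into the radical and break (b) and the coefficient $r-1$ in (d).

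In the setting of Lemma \ref{Lemma KK unique sol} this cannot happen, because the submodule generated by the maximal vectors has no weights above $\lambda-\beta_1$. Here that fails precisely at $\mu_0+\alpha_2$. A Jantzen-type sum formula would not settle it either. It only says that $L(\mu_0)$ occurs $K+1$ times in total across the layers $M^i$, $i\geq 1$, where $K=\sum_t\dim\mathfrak{g}_{\frac{1}{t}(M_1\alpha_1+n\alpha_2)}$ is the lower bound in \eqref{maxl vect count inequality}. Together with $r\geq K$, this still allows $[M(\lambda):L(\mu_0)]=r+1$ when $r=K$.

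\textbf{The missing computation.} This is the core of the paper's proof. Work in the PBW basis on Lyndon root vectors. The targets $f_2^nf_1^{M_1-1}m_\lambda$ (for $e_1$) and $f_2^{n-1}f_1^{M_1}m_\lambda$ (for $e_2$) have the maximal number $n+M_1-1$ of factors in their weights. By the paper's lexicographic claim, or simply by PBW filtration degree, only two weight-$\mu_0$ monomials can feed them:
\begin{itemize}
\item $f_2^nf_1^{M_1}$, which contributes nothing because $f_2^nf_1^{M_1}m_\lambda$ is maximal;
\item $[f_2,f_1]f_2^{n-1}f_1^{M_1-1}$, which contributes $-a$ (under $e_1$) and $c$ (under $e_2$).
\end{itemize}
Hence $e_1w=0$ forces the coefficient of $[f_2,f_1]f_2^{n-1}f_1^{M_1-1}m_\lambda$ in $w$ to vanish. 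Then $e_2w$ has no $f_2^{n-1}f_1^{M_1}m_\lambda$-component, so $t=0$. Thus every maximal vector of weight $\mu_0$ in $M(\lambda)/\langle f_1^{M_1}m_\lambda\rangle$ lifts to a maximal vector of $M(\lambda)$, hence lies in $R$. With this inserted, the rest of your outline goes through.

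\textbf{A smaller issue in (c).} Your count is internally inconsistent. Parametrizing by $\dim(N_V\cap R)\in\{0,\dots,r\}$, $l$ and $i$ gives $2(2r+1)$ classes. You reach $6r$ only by also recording whether $f_2^nf_1^{M_1}m_\lambda\in N_V$. Moreover, quotients of $M(\lambda)$ by distinct submodules are never isomorphic, so for $r\geq 2$ such a count can only refer to a coarser classification.
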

\begin{remark}
 $V$ with a given character numerator by Theorem \ref{Theorem composition series}(d), can be traced-back in  obvious way.
Similarly, the expected Jordan--H\"{o}lder series for $V$ can be easily written. 
\end{remark}
\begin{remark}\label{Remark r in unique solution case}
 (1) In each unique minimal solution in Remark \ref{Observation minimal unique sol in interior} (which the cases in Theorem \ref{Theorem maximal vectors} do not cover), we have equality in \eqref{maxl vect count inequality}.\\
  (2) \cite[Proposition 3.10]{T-P} recorded all the (infinitely many) solutions $(M_1=1, M_2-1\in \mathbb{N})$ for $M_2=4,6,7,9,13,15,16,...$, over $A(2)$.
  And in all those cases we have equality in \eqref{maxl vect count inequality} with $r=1$.
  Furthermore, we see the same equality with $r=2$ for minimal $(M_1=2, M_2-2=2)$ in the cases of \cite[Lemma 6.6(A)]{T-P}, by \cite[Proposition 3.7(c)]{T-P} or Theorem \ref{Theorem composition series}.\\
 (3) We know $r$ value for solutions $(2,n)$ by Theorem \ref{Theorem composition series}, but their minimality is prohibited when $n>2$ by \cite[Proposition 3.10(c)]{T-P}.
\end{remark}      
As Proposition \ref{Prop KK}(a) is not applicable in the cases in Theorem \ref{Theorem composition series}, in order to completely know character formulas here, we ask following questions:\begin{question}\label{Question num of maxl vect or socle simples geq 1?}
   (1)~Can we determine $r$ explicitly in Theorem \ref{Theorem composition series}, given that the solution that is strictly before $(M_1,n)$ is (easier and smaller) $(M_1,0)$?
   And under what conditions in those cases, $r$ equals the lower bound in \eqref{maxl vect count inequality}?
   The next question in BGG Category $\mathcal{O}$ could be of independent interest.
      \\
   (2)~In finite type $\dim \big[\mathrm{Hom}_{\mathfrak{g}}\big(L(\nu),\ M(\lambda)\big)]\leq 1$ (\cite[Theorem 4.2]{Hump BGG}).
   Consider a composition series $\cdots \longrightarrow M^i\cdots \longrightarrow M^0=M(\lambda)\twoheadrightarrow L(\lambda)$ for any $\lambda\in \mathfrak{h}^*$.
   Here, can a simple $L(\mu)$ occur at least twice in successive positions, i.e., $\frac{M^{i}}{M^{i+1}}\simeq \frac{M^{i-1}}{M^{i}}\simeq L(\mu)$?
    If that happens, is $\frac{M^{i-1}}{M^{i+1}}\simeq L(\mu)\oplus L(\mu)$?
   Particularly, can the socle of (non-Verma) h.w.m. $V$ be direct sum of two or more copies of $L(\mu)$?
 \end{question}
 \begin{remark}\label{Remark rank 2 settings}
We conclude the present discussions by noting that lower rank settings are prominent in the literature for studying: maximal vectors in Vermas \cite{Malikov}, extending Kazhdan--Lusztig theory to Kac--Moody setting \cite{Wallach--Rocha}, identities using Weyl--Kac characters \cite{Feingold} and word maps in negative settings of BKM LAs \cite[Section 5]{Bracket width}.
Next, our motivations to study solutions to norm equality and Kac--Kazhdan equation, include building the analysis of Weyl semigroups $W_{\mathcal{H}}$ to develop (even conjectural) character formulas for higher order Verma modules introduced and studied in \cite{Teja--Khare, T-P}.
\end{remark}
\begin{remark}\label{remark Lyndon words}
In negative setting \eqref{Eqn defn cartan matrix A(a,b,c,d)} --- or of negative $A$'s with any rank and $A_{i,j}<0$ for all $i,j$ --- the negative part $\mathfrak{n}^-$ is a free Lie algebra (e.g. \cite{Elizabeth}).
The Witt's necklace formula yields their root-multiplicities; and such settings of $A$'s continue to attract works in Lie theory and algebraic combinatorics (e.g. \cite{Shushma}).
Hall words, or Lyndon words on $f_i$'s yield basis for root spaces (\cite{Lyndon}).
Here, Weyl group is trivial, and even non-trivial integrable $L(\lambda)$'s are infinite-dimensional.
\end{remark}

\subsection*{Acknowledgments}
 We thank A. Khare and R. Venkatesh, for encouraging us to  study highest weight modules over BKM LAs.
We immensely thank B. Sury K. Hariram and P. Karmakar for fruitful discussions and for their help in Sage programming.
%We thank A. Prasad and S. Viswanath for their helpful discussions during Sage-days program at IMSc and for suggesting references. 
This work was mostly done when the first author was an NBHM Postdoc (fellowship Ref. No. 0204/9/2024/R\& D-II/2965) at IISc Bangalore, and when third author was an NBHM Postdoc at ISI Bangalore (Ref. No. 0204/16(8)/2022/R\&D-II/11979).
The third author thanks Prof. Munna Naik, and Dr. Kalachand for the support by DST inspire grant by Govt. of India (Reg. no. IFA-23MA191).

\section{Proof of Theorem \ref{Theorem maximal vectors}(A1): Maximal vectors with weight $(2,n)$}\label{Section proof of (A1)}
\noindent
We do computations in the proofs of (\ref{Theorem maximal vectors}1) and (\ref{Theorem maximal vectors}2) uniformly over all $A(b,a,c,d)$ for all $a,b,c,d\in \mathbb{N}$.
\begin{proof}[{\bf \textnormal Proof of Theorem \ref{Theorem maximal vectors}(A1)}]
We fix $\mathfrak{g}(A)$, $\lambda$, $M_1,M_2$ as in the theorem.
We assume $(2,n)$ to satisfy \eqref{Eqn norm equality condition for general mu= (X,Y) in Case (N)}.
We write in steps 1--4 below, the conditions and our algorithm for computing maximal vectors in $M(\lambda)_{\lambda-2\alpha_1-n\alpha_2}$ uniformly for all $n\in \mathbb{N}$.
\smallskip \\
\underline{Step 1}. {\it Basis for ``$(2,n)$-weight space'' in Verma $M(\lambda)$} : 
Let $\ad_{f_2}^{\circ k}:\mathfrak{g}\rightarrow \mathfrak{g}$ be the $k$-fold composition of the {\it adjoint bracket map} $\ad_{f_2}(\cdot)=[f_2, \cdot ]$ w.r.t. $f_2$.
We use the following ordered root-basis for $\mathfrak{n}^-$, where the order below increases from right to left:
\[
 \ \ \ldots \ ;  \big[\underbrace{\ad_{f_2}^{\circ k}(f_1),\ \ad_{f_2}^{\circ s}(f_1)}_{(k,s) \text{ s.t. } k<s \text{ and }k+s\leq n} \big] ;\ \  \ldots ;\ \   \big[f_1,\ [f_2,\ f_1]\big] ;\quad  \Big[ \underbrace{f_2, \ldots, \big[f_2, [f_2}_{n}, f_1] \big] \cdots \Big];\ \  \ldots ; \ \   [f_2, \ f_1];\ \   f_1; \ \   f_2
\]
Above, the subcollection of words  $\big[\ad_{f_2}^{\circ k}(f_1),\ \ad_{f_2}^{\circ s}(f_1)\big]$ from triangular region $\{(k, s)\ |\ k<s \text{ and }k+s\leq n\}$ are ordered among themselves with respect to the (induced) lexicographic/dictionary order on that region in $\mathbb{Z}_{\geq 0}\times \mathbb{Z}_{\geq 0}$. 
Our computations do not involve Lie words other than the above.

We work with following listed $(n+1)\times (n+1)$-many PBW monomial basis vectors (where-in $k+s\leq n$) using the above Lie words, from $M(\lambda-2\alpha_1-n\alpha_2)$.
\begin{equation}\label{Eqn (2,n) PWB monomial basis}
f_1^2f_2^n m_{\lambda},\qquad  \underset{k\geq s\geq 0}{\ad_{f_2}^{\circ k}(f_1)\cdot \ad_{f_2}^{\circ s}(f_1)}f_2^{n-k-s}m_{\lambda},\qquad \Big[\underset{0\leq k<s}{\ad_{f_2}^{\circ k}(f_1),\  \ad_{f_2}^{\circ s}(f_1)} \Big]f_2^{n-k-s}m_{\lambda}. 
\end{equation}
Recall $\big\{\big[\ad_{f_2}^{\circ k}(f_1),\  \ad_{f_2}^{\circ s}(f_1) \big]\ \big|\ 0\leq k<s \big\}$ constitutes the Lyndon root basis for space $\mathfrak{g}_{-2\alpha_1-(k+s)\alpha_2}$.
We assume for some tuple $(d_{k,s})_{\substack{0\leq k,s\leq n \\ k+s\leq n }}\in \mathbb{C}^{\frac{(n+1)(n+2)}{2}}$, that the below vector is maximal:
\[
x\ \ = \ \ \sum\limits_{0\leq  s\leq k\leq n} d_{k,s} \ad_{f_2}^{\circ k}(f_1)\cdot \ad_{f_2}^{\circ s}(f_1)f_2^{n-k-s}m_{\lambda}\ +\  \sum\limits_{0\leq k<s \leq n}d_{k,s}\big[\ad_{f_2}^{\circ k}(f_1),\  \ad_{f_2}^{\circ s}(f_1) \big]f_2^{n-k-s}m_{\lambda}.\]
\medskip\\
\underline{Step 2}. {\it Raising operator actions on basis vectors in $(2,n)$-space } : 
Let $\delta(i,j)$ denote the {\it Dirac delta} function.
Note that $e_1$ takes $(2, n)$ weight-space of $M(\lambda)$ into its $(1, n)$ weight-space.
Similarly, $e_2$ takes $(2, n)$ weight-space of $M(\lambda)$ into its $(2, n-1)$ weight-space.
Under each of these operations, we expand the image of each basis vector from $(2, n)$ space in terms of (ordered) basis elements from the  respective lower target weight-spaces, in the following computations:

\noindent
To determine $d_{k,s}$ for maximality of $x$, we write the actions of $e_i$s on the basis vectors in \eqref{Eqn (2,n) PWB monomial basis}. 
\begin{itemize}
    \item[(1)] $f_1^2f_2^n m_{\lambda}$\ \   $\xlongrightarrow{e_1}$\ \ $\big(-b(M_1-1)+2na+b\big)f_1f_2^nm_{\lambda}$\medskip
    \item[(2)] $\ad_{f_2}^{\circ k}(f_1)\ad_{f_2}^{\circ s}(f_1)f_2^{n-k-s} m_{\lambda}$\ \  ($k>s\geq 0$)  \  \    $\xlongrightarrow{e_1}$\ \ $\delta(s,0)\Big(-\frac{b}{2}(M_1-1)+(n-k)a\Big)\ad_{f_2}^{\circ k}(f_1)f_2^{n-k} m_{\lambda}$\medskip \\
    \hspace*{6.5cm}$-\ \delta(s,1)\big(a\big)\ad_{f_2}^{\circ k}(f_1)f_2^{n-k}m_{\lambda}\ - \ \delta(k,1)(a)f_2f_1f_2^{n-1}m_{\lambda} \medskip \\
    \hspace*{1.5cm}= \ \delta(s,0)\Big(-\frac{b}{2}(M_1-1)+(n-k)a\Big)\underset{k\geq 1}{\ad_{f_2}^{\circ k}(f_1)f_2^{n-k}} m_{\lambda}\ -\ \underset{n>k\geq 2>s=1}{\delta(s,1)(a)\ad_{f_2}^{\circ k}(f_1)f_2^{n-k}}m_{\lambda}$ \medskip \\
    \hspace*{2cm}  $- \ \delta(k,1)(a)[f_2,f_1]f_2^{n-1}m_{\lambda}\ - \ \delta(k,1)(a)f_1f_2^{n}m_{\lambda}$ \medskip
    \item[(3)] $[f_2,f_1][f_2,f_1]f_2^{n-2}m_{\lambda}$\ \ $\xlongrightarrow{e_1}$\ \ $(-2a)[f_2,f_1]f_2^{n-1}m_{\lambda}\ -\ a\big[f_2, [f_2, f_1]\big]f_2^{n-2}m_{\lambda}$  \medskip
 \item[(4)] $\ad_{f_2}^{\circ k}(f_1) \ad_{f_2}^{\circ s}(f_1)f_2^{n-k-s}m_{\lambda}$ \ and\ $\big[\ad_{f_2}^{\circ k}(f_1),\ \ad_{f_2}^{\circ s}(f_1)\big]f_2^{n-k-s}m_{\lambda}$ \ \  $(k,s\geq 2)$ \ \ $\xlongrightarrow{e_1}$ \ \ 0  \medskip
    \item[(5)] $\Big[[f_2,f_1],\ \ad_{f_2}^{\circ s}(f_1)\Big]f_2^{n-s-1}m_{\lambda}$\ \ ($n>s\geq 2$)\ \ $\xlongrightarrow{e_1}$\ \ $ (-a)\ad_{f_2}^{\circ s+1}(f_1)f_2^{n-s-1}m_{\lambda}$ \medskip
      \item[(6)] $\Big[f_1,\ \ad_{f_2}^{\circ s}(f_1)\Big]f_2^{n-s}m_{\lambda}$\ \ ($s\geq 1$)\ \ $\xlongrightarrow{e_1}$\ \ $\Big(\big[s+\delta(s,1)\big]a+b\Big)\ad_{f_2}^{\circ s}(f_1)f_2^{n-s}m_{\lambda}\ +\ 0$
      \bigskip
    \item[($1'$)] $f_1^2f_2^n m_{\lambda}$\ \   $\xlongrightarrow{e_2}$\ \ $n\frac{d}{2} \big(-(M_2-1)+(n-1)\big)f_1^2f_2^{n-1}m_{\lambda}$ \medskip
    \item[($2'$)] $\ad_{f_2}^{\circ k}(f_1) \ad_{f_2}^{\circ s}(f_1)f_2^{n-k-s}m_{\lambda}$\ \  $(k> s\geq 0)$ \ \ $\xlongrightarrow{e_2}$ \ \ $k\Big( c+\frac{k-1}{2}d\Big)\ad_{f_2}^{\circ k-1}(f_1) \ad_{f_2}^{\circ s}(f_1)f_2^{n-k-s}m_{\lambda}$ \medskip \\
    \hspace*{3.5cm} + \   $s\Big( c+\frac{s-1}{2}d\Big)\ad_{f_2}^{\circ k}(f_1) \ad_{f_2}^{\circ s-1}(f_1)f_2^{n-k-s}m_{\lambda}$ \medskip \\
    \hspace*{3.5cm} +\  $\big(n-k-s\big)\frac{d}{2} \Big(-(M_2-1)+ (n-k-s-1)\Big)\ad_{f_2}^{\circ k}(f_1) \ad_{f_2}^{\circ s}(f_1)f_2^{n-k-s-1}m_{\lambda}$ \medskip \medskip 
       \item[($3'$)] $\ad_{f_2}^{\circ s}(f_1) \ad_{f_2}^{\circ s}(f_1)f_2^{n-2s}m_{\lambda}$\ \ ($s\geq 1$) \ \   $\xlongrightarrow{e_2}$ \ \ $s\Big( c+\frac{s-1}{2}d\Big)\Big[\ad_{f_2}^{\circ s-1}(f_1) ,\ \ad_{f_2}^{\circ s}(f_1)\Big]f_2^{n-2s}m_{\lambda}$ \medskip \\
    \hspace*{3.5cm} + \   $2s\Big( c+\frac{s-1}{2}d\Big)\ad_{f_2}^{\circ s}(f_1) \ad_{f_2}^{\circ s-1}(f_1)f_2^{n-2s}m_{\lambda}$ \medskip \\
    \hspace*{3.5cm} +\  $\big(n-2s\big)\frac{d}{2} \big(-(M_2-1)+ (n-2s-1)\big)\ad_{f_2}^{\circ s}(f_1) \ad_{f_2}^{\circ s}(f_1)f_2^{n-2s-1}m_{\lambda}$ \medskip
     \item[($4'$)] $\Big[\ad_{f_2}^{\circ k}(f_1),\ \ad_{f_2}^{\circ s}(f_1)\Big]f_2^{n-k-s}m_{\lambda}$\ \ ($0\leq k\leq s-1$)\ \   $\xlongrightarrow{e_2}$ \ \  $k\Big(c+\frac{k-1}{2}d \Big)\Big[\ad_{f_2}^{\circ k-1}(f_1),\ \ad_{f_2}^{\circ s}(f_1)\Big]f_2^{n-k-s}m_{\lambda}$ \medskip \\
     \hspace*{4.5cm} +\  $\Big(1-\delta(k, s-1)\Big) s\Big(c+\frac{s-1}{2}d \Big)\Big[\ad_{f_2}^{\circ k}(f_1),\ \ad_{f_2}^{\circ s-1}(f_1)\Big]f_2^{n-k-s}m_{\lambda}$ \medskip \\
     \hspace*{4.5cm}+\ $(n-k-s)\frac{d}{2}\Big( -(M_2-1)+(n-k-s-1)\Big)\Big[\ad_{f_2}^{\circ k}(f_1),\ \ad_{f_2}^{\circ s}(f_1)\Big]f_2^{n-k-s-1}m_{\lambda}$
     %\item[($5'$)] $\Big[\ad_{f_2}^{\circ s-1}(f_1),\ \ad_{f_2}^{\circ s}(f_1)\Big]f_2^{n-2s-1}m_{\lambda}$\ \ ($s\geq 1$)\ \   $\xlongrightarrow{e_2}$ \ \  $(s-1)\Big(a+\frac{s-2}{2}b \Big)\Big[\ad_{f_2}^{\circ s-2}(f_1),\ \ad_{f_2}^{\circ s}(f_1)\Big]f_2^{n-k-s}m_{\lambda}$ \ + \ 0 \\ \hspace*{4.5cm}+\ $(n-2s-1)\Big(a+\frac{n-2s-2}{2}b\Big)\Big[\ad_{f_2}^{\circ s-1}(f_1),\ \ad_{f_2}^{\circ s}(f_1)\Big]f_2^{n-k-s-2}m_{\lambda}$
 \end{itemize}\bigskip
We consider the vanishings in $e_1x=0$ of  \ \ \ $f_1f_2^n m_{\lambda},\ \ \ [f_2, f_1]f_2^{n-1}m_{\lambda},\ \ \  \big[f_2,[f_2,f_1]\big]f_2^{n-2}m_{\lambda} \ (\text{for }n>2),\ \ \  \ad_{f_2}^{\circ k}(f_1)f_2^{n-k}m_{\lambda} \ (\text{for }n-1\geq k>2)$,\ \ \ $\ad_{f_2}^{\circ n}(f_1)m_{\lambda}$ (for $n\geq 2$). \smallskip \\ 
Next we consider the vanishings in $e_2x=0$ of $f_1^2f_2^{n-1}m_{\lambda},\ \ \  \ad_{f_2}^{\circ k-1}(f_1)\ad_{f_2}^{\circ s}(f_1)f_2^{n-k-s}m_{\lambda}\ (\text{for }k-s\geq 3),\ \ \  \ad_{f_2}^{\circ s+1}(f_1)\ad_{f_2}^{\circ s}(f_1)f_2^{n-2s-2}m_{\lambda} \ (\text{for }s\geq 0),\ \ \  \ad_{f_2}^{\circ s}(f_1)\ad_{f_2}^{\circ s}(f_1)f_2^{n-2s-1}m_{\lambda}\ (\text{for }s\geq 1)$,\ \ \ 
 $\big[\ad_{f_2}^{\circ s-1}(f_1),\ \ad_{f_2}^{\circ s}(f_1)\big]f_2^{n-2s}m_{\lambda} \ (\text{for }s\geq 1)$,\ \ \  
 $\big[\ad_{f_2}^{\circ k-1}(f_1),\ \ad_{f_2}^{\circ s}(f_1)\big]f_2^{n-k-s}m_{\lambda} \ (\text{for }s-k\geq 1)$.\smallskip \\
 These vanishings lead to the following two sets of (5+6) equations, where-in we define for convenience sake for fixed $M_1,M_2,b,a,c,d\in \mathbb{N}$ above, \quad  $A(k):=  k\Big( c+\frac{k-1}{2}d\Big)$,\quad   $B(k):= \Big(-\frac{b}{2}(M_1-1)+(n-k)a\Big)$,\quad  $C(k):= (n-k)\frac{d}{2}\big(n-k-M_2\big)$.
 \begin{align}
    \big(-b(M_1-1)+b+2an\big)\ d_{0,0}\ - \ a\ d_{1,0}\quad = &\ 0 \label{Eqn length 2 M1}\\
    B(2)\ d_{1,0}-2a \ d_{1,1}\ +\   (2a+b)\ d_{0, 1}\quad = &\ 0\label{Eqn length 4 at k=1}\\
B(2)\ d_{2,0}\ -\ a\  d_{2,1}\ -\ a\ d_{1,1}\ +\   (2a+b)d_{0,2}\quad = &\ 0\label{Eqn length 4 at k=2} \\
B(k)\ d_{k,0}\ -\ a\ d_{k,1}\ -\ a\ d_{1,k-1}\ +\   (ka+b)\ d_{0,k}\quad = &\ 0\label{Eqn length 4 at k} \\ 
 B(n)\ d_{n,0}\ - \ a \ d_{1,n-1}\ + \ (na+b) \ d_{0,n} \quad = & \ 0 \label{Eqn length 3 at n}\\
C(0)\ d_{0,0}\ + \ c d_{1,0}\quad = &\ 0\label{Eqn length 2 M2} \\
 A(k)\ d_{k,s}\ +\ A(s+1)\ d_{k-1,s+1}\ +\ C(k+s-1) \ d_{k-1,s}\quad = &\ 0\label{Eqn s- (k-1) leq -2}\\
 A(s+2)\ d_{s+2,s}  \ +  \  2A(s+1) \ d_{s+1,s+1} 
 \ + \  C(2s+1)\ d_{s+1,s}\quad = &\ 0 \label{Eqn above diag}\\
 A(s+1)\ d_{s+1,s}\ + \ C(2s)\ d_{s,s}\quad = &\ 0\label{Eqn length 2} \\
 A(s)\ d_{s,s} \ 
 + \ A(s+1)\ d_{s-1,s+1}  \ + \ C(2s-1)\ d_{s-1,s}\quad = &\ 0 \label{Eqn below diag}\\
  A(k)\ d_{k,s} \ 
 + \ A(s+1)\ d_{k-1,s+1} \ +  \ C(k+s-1)\ d_{k-1,s}\label{Eqn s-(k-1) geq 2}\quad = &\ 0 
     \end{align}            
     \underline{Step 3}. {\it Cluster(s) of variables occurring in each of the equations above are as follows}:
     We interpret the above equations graphically.
     Namely, ignoring the coefficients (involving arbitrarily fixed $M_1,M_2,b,a,c,d\in \mathbb{N}$ above), we see the below pictures (A)--(G) respectively for the plots of variables (connected) in each equation in the following families:
     (This in particular categorizes similar set of equations among the above.) \ \ \ 
     (A) \  \eqref{Eqn length 2 M2} and \eqref{Eqn length 2} \ \Big(for each $0\leq s\leq \lfloor \frac{n}{2}\rfloor-1$\Big), \  the dropping of \eqref{Eqn length 2 M1} from (A) which will be explained later. \ \ \ (B) \ \eqref{Eqn length 4 at k=1}. \ \ \ (C)~\eqref{Eqn length 4 at k=2} and \eqref{Eqn length 4 at k} \ (for each $2\leq k\leq n-1$). \ (D) \ \eqref{Eqn length 3 at n} \ (for each $n\geq 2$). \ \ \ (E) \ \eqref{Eqn s- (k-1) leq -2} and \eqref{Eqn s-(k-1) geq 2} \ \big(for every $(k-1,s)$ not on the three lines with $| k-s|\leq 1$\big). \ \ \ (F) \ \eqref{Eqn above diag} \ \big(for each $0\leq s\leq \lfloor \frac{n}{2}\rfloor-1$\big). \ \ \ (G) \ \eqref{Eqn below diag} \ \big(for each $0\leq s\leq \lfloor \frac{n}{2}\rfloor$\big).
     \begin{center}
					\begin{figure}[H]
						\begin{subfigure}[t]{0.2\textwidth}
							\begin{tikzpicture}[scale=0.5] 
							\draw[step=1, gray, ultra thin] (-0.2,-0.2) grid (6,6) ;
							\draw [fill=black] (0, 0) circle (5pt) ;
                            	\draw [fill=black] (1, 0) circle (5pt) ;
                            \draw [fill=black] (1, 1) circle (5pt) ;
    \draw [fill=black] (2, 1) circle (5pt) ;
   % \draw [fill=black] (2, 2) circle (5pt) ;     \draw [fill=black] (3, 2) circle (5pt) ;
    \draw [fill=black] (3, 3) circle (5pt) node[anchor=south] {{\color{black}\tiny{($s,s$)}}};
    \draw [fill=black] (4, 3) circle (5pt) ;
   % \draw [fill=black] (3, 4) circle (5pt) ;
    \draw[line width = 0.7mm]  (0,0) -- (1,0);
 \draw[line width = 0.7mm]  (1,1) -- (2,1);
  %\draw[line width = 0.7mm]  (2,2) -- (3,2);
  \draw[line width = 0.7mm]  (3,3) -- (4,3);
  \draw[dashed]  (3,6) -- (6,3);
   \draw [fill=black] (3.3, 5.7)  node[anchor=west] {{\color{black}\tiny{$s+k=n$}}};
    \draw [fill=black] (4.2, 5.25)  node[anchor=west] {{\color{black}\tiny{line}}};
            \draw (1.7, -0.2) node[anchor=west]{{\color{black}{ $\boldsymbol{\cdots}$ }}}; 
             \draw (3.7, -0.2) node[anchor=west]{{\color{black}{ $\boldsymbol{\cdots}$ }}}; 
     \draw (0.4, 4.2) node[anchor=east]{{\color{black}{ $\boldsymbol{\vdots}$ }}}; 
   \draw (0.4, 2.2) node[anchor=east]{{\color{black}{ $\boldsymbol{\vdots}$ }}}; 
  \draw (3.2, 2.2) node[anchor=east]{{\color{black}{ $\boldsymbol{\iddots}$ }}};
  \draw (5.2, 4) node[anchor=east]{{\color{black}{ $\boldsymbol{\iddots}$ }}}; 
	\end{tikzpicture}\caption{}
						\end{subfigure} \quad 
                        \begin{subfigure}[t]{0.2\textwidth} 
\begin{tikzpicture}[scale=0.4] 
							\draw[step=1, gray, ultra thin] (-0.2,-0.2) grid (4.2,4.2) ;
							\draw [fill=black] (1, 0) circle (5pt) ;
    \draw [fill=black] (1, 1) circle (5pt) node[anchor=west] {{\color{black}\tiny{($1,1$)}}} ;
    \draw [fill=black] (0, 1) circle (5pt) ;
                             \draw[dashed]  (4,0) -- (0,4);
     \draw [fill=black] (0.3, 3.7)  node[anchor=west] {{\color{black}\tiny{$s+k=n$ line}}};
    \draw[line width = 0.7mm]  (1,0) -- (1,1) -- (0,1);
                            \end{tikzpicture}
                            \caption{\hspace*{1cm}}
                            \end{subfigure}  \quad 
						\begin{subfigure}[t]{0.2\textwidth} 
\begin{tikzpicture}[scale=0.5] 
\draw[step=1, gray, ultra thin] (-0.2,-0.2) grid (8,8) ;
\draw [fill=black] (0, 2) circle (5pt) ;
\draw [fill=black] (1, 1) circle (5pt) ;
    \draw [fill=black] (0, 3) circle (5pt) ;
    \draw [fill=black] (1, 2) circle (5pt) ;
   % \draw [fill=black] (2, 2) circle (5pt) ;     \draw [fill=black] (3, 2) circle (5pt) ;
%   \draw [fill=black] (0, 5) circle (5pt) ;
\draw [fill=black] (2,0) circle (5pt) ;
\draw [fill=black] (2, 1) circle (5pt) ;
\draw [fill=black] (3,0) circle (5pt) ;
\draw [fill=black] (3,1) circle (5pt) ;
    \draw [fill=black] (0, 5) circle (5pt) node[anchor=west] {{\color{black}\tiny{($0,k$)}}};
    \draw [fill=black] (5,0) circle (5pt) node[anchor=north] {{\color{black}\tiny{($k,0$)}}};
    \draw [fill=black] (5,1) circle (5pt) node[anchor=south] {{\color{black}\tiny{($k,1$)}}};
     \draw [fill=black] (7,0) circle (5pt) node[anchor=north] {{\color{black}\tiny{($n-1,0$)}}};
    \draw [fill=black] (7,1) circle (5pt) node[anchor=south] {{\color{black}\tiny{($n-1,1$)}}};
     \draw [fill=black] (1, 4) circle (5pt) node[anchor=west] {{\color{black}\tiny{($1,k-1$)}}};
     \draw [fill=black] (0, 7) circle (5pt) node[anchor=west] {{\color{black}\tiny{($0,n-1$)}}};
     \draw [fill=black] (1, 6) circle (5pt) node[anchor=west] {{\color{black}\tiny{($1,n-2$)}}};
    \draw[line width = 0.7mm]  (0,2) -- (1,1);
    \draw[line width = 0.7mm]  (0,3) -- (1,2);
    \draw[line width = 0.7mm]  (0,5) -- (1,4);
     \draw[line width = 0.7mm]  (0,7) -- (1,6);
  \draw[dashed]  (8,0) -- (0,8);
   \draw[line width = 0.7mm]  (2,0) -- (2,1);
    \draw[line width = 0.7mm]  (3,0) -- (3,1);
        \draw[line width = 0.7mm]  (5,0) -- (5,1);
     \draw[line width = 0.7mm]  (7,0) -- (7,1);
  \draw[dashed]  (8,0) -- (0,8);
     \draw [fill=black] (4.4, 3.5)  node[anchor=west] {{\color{black}\tiny{$s+k=n$ line}}};
              \draw (3, 0.5) node[anchor=west]{{\color{black}{ $\boldsymbol{\cdots}$ }}};
              \draw (5, 0.5) node[anchor=west]{{\color{black}{ $\boldsymbol{\cdots}$ }}};
     \draw (1.2, 3.8) node[anchor=east]{{\color{black}{ $\boldsymbol{\vdots}$ }}}; 
     \draw (1.2, 6) node[anchor=east]{{\color{black}{ $\boldsymbol{\vdots}$ }}}; 
     	\draw[gray, line width = 0.1mm]  (0,2) -- (-0.3, 2) -- (-0.3, -0.3) -- (2, -0.3) -- (2,0);
        \draw[gray, line width = 0.1mm]  (0,3) -- (-0.6, 3) -- (-0.6, -0.6) -- (3, -0.6) -- (3,0);
         \draw[gray, line width = 0.1mm]  (0,5) -- (-1.2, 5) -- (-1.2, -1.2) -- (5, -1.2) -- (5,0);
          \draw[densely dotted, thick] (-1.2, -1.2) -- (-0.6, -0.6) ;
          \draw[gray, line width = 0.1mm]  (0,7) -- (-1.7, 7) -- (-1.7, -1.7) -- (7, -1.7) -- (7,0);
          \draw[densely dotted, thick] (-1.7, -1.7) -- (-1.2, -1.2) ;
	\end{tikzpicture}\caption{$n>2$}
						\end{subfigure}~\qquad \qquad\ \      
                        \begin{subfigure}[t]{0.22\textwidth} 
\begin{tikzpicture}[scale=0.5] 
							\draw[step=1, gray, ultra thin] (-0.2,-0.2) grid (4.2,4.2) ;
							\draw [fill=black] (4, 0) circle (5pt) ;
    \draw [fill=black] (1, 3) circle (5pt) node[anchor=west] {{\color{black}\tiny{($1,n-1$)}}} ;
    \draw [fill=black] (0, 4) circle (5pt) ;
                             \draw[dashed]  (4,0) -- (0,4);
     \draw [fill=black] (1.3, 2.5)  node[anchor=west] {{\color{black}\tiny{$s+k=n$}}};
     \draw [fill=black] (2, 2)  node[anchor=west] {{\color{black}\tiny{line}}} ;
    \draw[line width = 0.7mm]  (1,3) -- (0,4);
                            \end{tikzpicture}
                            \caption{}
                            \end{subfigure} 
                        \bigskip
                        \begin{subfigure}[t]{0.22\textwidth} 
\begin{tikzpicture}[scale=0.5] 
\draw[step=1, gray, ultra thin] (-0.2,-0.2) grid (9.2,9.2) ;
\draw [fill=black] (2, 0) circle (5pt) ;
\draw [fill=black] (2, 1) circle (5pt) ;
\draw [fill=black] (3, 0) circle (5pt) ;
\draw [fill=black] (9, 0) circle (5pt) ;
\draw [fill=black] (8, 1) circle (5pt) ;
\draw [fill=black] (8, 0) circle (5pt) ;
\draw [fill=black] (5,4) circle (5pt) ;
\draw [fill=black] (5,3) circle (5pt) ;
\draw [fill=black] (6,3) circle (5pt) ;
\draw [fill=black] (0,2) circle (5pt) ;
\draw [fill=black] (1, 2) circle (5pt) ;
\draw [fill=black] (0, 3) circle (5pt) ;
\draw [fill=black] (0,9) circle (5pt) ;
\draw [fill=black] (1,8) circle (5pt) ;
\draw [fill=black] (0, 8) circle (5pt) ;
\draw [fill=black] (4,5) circle (5pt) ;
\draw [fill=black] (3,5) circle (5pt) ;
\draw [fill=black] (3,6) circle (5pt) ;
     \draw (2.5, 0.3) node[anchor=west]{{\color{black}{ $\boldsymbol{\cdots}$ }}};
        \draw (5, 0.3) node[anchor=west]{{\color{black}{ $\boldsymbol{\cdots}$ }}};
         \draw (4.3, 1) node[anchor=east]{{\color{black}{ $\boldsymbol{\iddots}$ }}};
     \draw (4.5, 2.5) node[anchor=east]{{\color{black}{ $\boldsymbol{\iddots}$ }}};
     \draw (5.7, 2) node[anchor=east]{{\color{black}{ $\boldsymbol{\iddots}$ }}};  
         \draw (7.8, 2) node[anchor=east]{{\color{black}{ $\boldsymbol{\ddots}$ }}};
    \draw [fill=black] (4, 1) circle (5pt)  ;
    \draw [fill=black] (5, 1) circle (5pt) node[anchor=west] {{\color{black}\tiny{($k,s$)}}} ;
    \draw [fill=black] (4, 2) circle (5pt) ;
        \draw[dashed]  (9,0) -- (0,9);
     \draw [fill=black] (2.,7.5)  node[anchor=west] {{\color{black}\tiny{$s+k=n$ line}}};
      \draw[dashed]  (0,0) -- (9,9);
     \draw [fill=black] (6,6)  node[anchor=west] {{\color{black}\tiny{$k=s$ line}}};
    \draw[line width = 0.7mm]  (3,0) -- (2,0) -- (2,1);
      \draw[line width = 0.7mm]  (9,0) -- (8,0) -- (8,1);
          \draw[line width = 0.7mm]  (0,9) -- (0,8) -- (1,8);
              \draw[line width = 0.7mm]  (0,3) -- (0,2) -- (1,2);
                 \draw[line width = 0.7mm]  (4,2) -- (4,1) -- (5,1);
       \draw[line width = 0.7mm]  (5,4) -- (5,3) -- (6,3);

        \draw (1,3.8) node[anchor=east]{{\color{black}{ $\boldsymbol{\vdots}$ }}};
      \draw (1,6) node[anchor=east]{{\color{black}{ $\boldsymbol{\vdots}$ }}};
        \draw (0.3,5) node[anchor=west]{{\color{black}{ $\boldsymbol{\vdots}$ }}};
         \draw (1.5,3.3) node[anchor=east]{{\color{black}{ $\boldsymbol{\iddots}$ }}};
     \draw (3, 5) node[anchor=east]{{\color{black}{ $\boldsymbol{\iddots}$ }}};
     \draw (3, 3.5) node[anchor=east]{{\color{black}{ $\boldsymbol{\iddots}$ }}};  
         \draw (3,6.8) node[anchor=east]{{\color{black}{ $\boldsymbol{\ddots}$ }}};
    \draw [fill=black] (1,4) circle (5pt)  ;
    \draw [fill=black] (1,5) circle (5pt)  ;
    \draw [fill=black] (2,4) circle (5pt) node[anchor=west] {{\color{black}\tiny{($k,s$)}}} ;
 %    \draw[line width = 0.7mm]  (3,0) -- (2,0) -- (2,1);
      \draw[line width = 0.7mm]  (0,9) -- (0,8) -- (1,8);
          %\draw[line width = 0.7mm]  (0,9) -- (0,8) -- (1,8);
              \draw[line width = 0.7mm]  (0,3) -- (0,2) -- (1,2);
                 \draw[line width = 0.7mm]  (2,4) -- (1,4) -- (1,5);
       \draw[line width = 0.7mm]  (4,5) -- (3,5) -- (3,6);
       \draw[dotted, thick]  (1.7, 0) -- (1.7,1.2) -- (5,4.4) -- (9, 0.2);
          \draw[dotted, thick]  (0, 1.7) -- (1.2,1.7) -- (4.4,5) -- (0.2, 9);
       \draw[line width = 0.7mm]  (4,5) -- (3,5) -- (3,6);
    \end{tikzpicture}
                \caption{$|k-1 -s|\geq 1$}
         \end{subfigure} ~ \qquad\qquad 
         \begin{subfigure}[t]{0.22\textwidth}
							\begin{tikzpicture}[scale=0.5] 
							\draw[step=1, gray, ultra thin] (-0.2,-0.2) grid (6,6) ;
							\draw [fill=black] (1, 0) circle (5pt) ;
                            	\draw [fill=black] (1, 1) circle (5pt) ;
                            \draw [fill=black] (2,0) circle (5pt) ;
                            \draw [fill=black] (4, 4) circle (5pt)  node[anchor=east] {{\color{black}\tiny{$(\lfloor n/2\rfloor, \lfloor n/2\rfloor)$}}};
                            	\draw [fill=black] (4,3) circle (5pt) ;
                            \draw [fill=black] (5,3) circle (5pt) ;
       \draw[line width = 0.7mm]  (1,1) -- (1,0) -- (2,0);
    \draw[line width = 0.7mm]  (4,4) -- (4,3) -- (5,3);
  %\draw[line width = 0.7mm]  (2,2) -- (3,2);
   \draw[dashed]  (3,6) -- (6,3);
      \draw[dashed]  (0,0) -- (4,4);
         \draw[dashed]  (2,6) -- (6,2);
   \draw [fill=black] (0, 5.7)  node[anchor=west] {{\color{black}\tiny{$s+k=n$}}};
     \draw [fill=black] (1.2, 5.2)  node[anchor=west] {{\color{black}\tiny{(even)}}};
        \draw [fill=black] (3, 5.7)  node[anchor=west] {{\color{black}\tiny{$n=s+k$}}};
     \draw [fill=black] (3.2, 5.2)  node[anchor=west] {{\color{black}\tiny{(odd)}}};
        \draw [fill=black] (2.8, 2.8)  node[anchor=east] {{\color{black}\tiny{line $s=k$}}};
            %\draw (1.7, -0.2) node[anchor=west]{{\color{black}{ $\boldsymbol{\cdots}$ }}}; 
           %  \draw (3.7, -0.2) node[anchor=west]{{\color{black}{ $\boldsymbol{\cdots}$ }}}; 
 %    \draw (0.4, 4.2) node[anchor=east]{{\color{black}{ $\boldsymbol{\vdots}$ }}}; 
  % \draw (0.4, 2.2) node[anchor=east]{{\color{black}{ $\boldsymbol{\vdots}$ }}}; 
  \draw (3.7, 2) node[anchor=east]{{\color{black}{ $\boldsymbol{\iddots}$ }}};
  %\draw (5.2, 4) node[anchor=east]{{\color{black}{ $\boldsymbol{\iddots}$ }}}; 
	\end{tikzpicture}\caption{}
						\end{subfigure} ~
                        \qquad
         \begin{subfigure}[t]{0.22\textwidth}
							\begin{tikzpicture}[scale=0.5] 
							\draw[step=1, gray, ultra thin] (-0.2,-0.2) grid (6,6) ;
							\draw [fill=black] (0,1) circle (5pt) ;
                            	\draw [fill=black] (1, 1) circle (5pt) ;
                            \draw [fill=black] (0,2) circle (5pt) ;
                            \draw [fill=black] (4, 4) circle (5pt)  node[anchor=north] {{\color{black}\tiny{\qquad\qquad $(\lfloor n/2\rfloor, \lfloor n/2\rfloor)$}}};
                            	\draw [fill=black] (3,4) circle (5pt) ;
                            \draw [fill=black] (3,5) circle (5pt) ;
       \draw[line width = 0.7mm]  (1,1) -- (0,1) -- (0,2);
    \draw[line width = 0.7mm]  (4,4) -- (3,4) -- (3,5);
  %\draw[line width = 0.7mm]  (2,2) -- (3,2);
   \draw[dashed]  (3,6) -- (6,3);
      \draw[dashed]  (0,0) -- (4,4);
         \draw[dashed]  (2,6) -- (6,2);
   \draw [fill=black] (0, 5.7)  node[anchor=west] {{\color{black}\tiny{$s+k=n$}}};
     \draw [fill=black] (1.2, 5.2)  node[anchor=west] {{\color{black}\tiny{(even)}}};
        \draw [fill=black] (3, 5.7)  node[anchor=west] {{\color{black}\tiny{$n=s+k$}}};
     \draw [fill=black] (3.2, 5.2)  node[anchor=west] {{\color{black}\tiny{(odd)}}};
        \draw [fill=black] (1.8, 1.8)  node[anchor=west] {{\color{black}\tiny{line $s=k$}}};
            %\draw (1.7, -0.2) node[anchor=west]{{\color{black}{ $\boldsymbol{\cdots}$ }}}; 
           %  \draw (3.7, -0.2) node[anchor=west]{{\color{black}{ $\boldsymbol{\cdots}$ }}}; 
 %    \draw (0.4, 4.2) node[anchor=east]{{\color{black}{ $\boldsymbol{\vdots}$ }}}; 
  % \draw (0.4, 2.2) node[anchor=east]{{\color{black}{ $\boldsymbol{\vdots}$ }}}; 
  \draw (0.8, 3) node[anchor=west]{{\color{black}{ $\boldsymbol{\iddots}$ }}};
  %\draw (5.2, 4) node[anchor=east]{{\color{black}{ $\boldsymbol{\iddots}$ }}}; 
	\end{tikzpicture}\caption{}
						\end{subfigure}
         \end{figure}
                        \end{center}
     We build solution-tuple $\big(d_{k,s}\big)_{\substack{k,s\geq 0  \\ k+s\leq n }}$ for each fixed $n\in \mathbb{N}$, to the above $\frac{(n+1)(n+2)}{2}$ many equations.
Namely, we assume that tuple to satisfy all the conditions, and then use some of the conditions to determine the values of those $d_{k,s}$ for each $(k,s)$.
   We note to observe the count in the penultimate line, that the total number of equations in families \eqref{Eqn length 2 M1}--\eqref{Eqn s-(k-1) geq 2} is: $\underset{\text{Fig (A)}}{\lfloor \frac{n+1}{2}\rfloor} + \underset{\eqref{Eqn length 2 M1}}{1} + \underset{\text{Fig (B) }\&\text{ (D)}}{2} +  \underset{\text{Fig (C)}}{n-2} + \underset{\text{Count of deg. 2 nodes in Figs (E)}-\text{(G)}}{\Big(\frac{(n+1)(n+2)}{2}-\lfloor \frac{n+1}{2}\rfloor-n-1\Big)}$. 
   And the desired tuples have $\frac{(n+1)(n+2)}{2}$ many components, which are symbols/un-knowns $d_{k,s}$.
        
          We begin by observing that \eqref{Eqn length 2 M1} coincides with \eqref{Eqn length 2 M2}, since $(2,n)$ satisfying \eqref{Eqn norm equality condition for general mu= (X,Y) in Case (N)} implies $-b(M_1-1)+b+2an = \frac{-a}{c} \frac{d}{2}n(n-M_2)= \frac{-a}{c}C(0)$.
          Henceforth we drop \eqref{Eqn length 2 M1}.
    So by the counts in the above paragraph, we have at least one (non-zero) maximal vector.
    And the below step shows the number of linearly independent maximal vectors to be at least two.
\medskip\\
\underline{Step 4} {\it \eqref{Eqn length 2 M1}--\eqref{Eqn s-(k-1) geq 2} are indeed $\frac{(n+1)(n+2)}{2}-2$ many equations} : 
For this, we will observe below that we have redundancies between the following set of four equations arising at : 1) $s=1$ in family (A), 2) $k=2$ in family (C), 3) $n=2$ in (D) and 4) the one in (B).
That is, we show that \eqref{Eqn length 4 at k=2} at $k=2$, is already a combination of the others in the previous line. 

     The first case here is when $n>2$.
     We use the hook and edge equations in pictures (F), (G) and (A)  respectively for variables $d_{2,0} , d_{0,2}, d_{2,1}$ -- involving $d_{1,1}$ -- and replace them in \eqref{Eqn length 4 at k=2} by $d_{1,0} , d_{1,1} , d_{0,1}$.
    \big(We could do so as assumed $(d_{k,s})$ to satisfy all the conditions, in particular the conditions in hook and edge equations.\big) 
    Then, we will have the following.
     \[B(2)\Big[\frac{-C(1)}{A(2)}\Big]d_{1,0}  \ +\  \left[ B(2)\frac{-2A(1)}{A(2)}-a- a\frac{-C(2)}{A(2)}- \frac{A(1)}{A(2)} (2a+b)\right]d_{1,1}  \ + \  (2a+b)\Big[\frac{-C(1)}{A(2)}\Big]d_{0,1} \ = \ 0 .\]
     Above the coefficient of $A(2)\times d_{1,1}$ is $ -2c \Big( \frac{-b}{2}(M_1-1)+ (n-2)a\Big)  - a(2c+d) + \frac{ad}{2}(n-2)(n-M_2-2)- c(2a+b) \ = \  bc(M_1-1) - 2(n-\cancel{2})ac - \cancel{4ac} -ad -bc +\frac{ad}{2}n^2-\frac{ad}{2}nM_2 + ad (M_2-2n+2)$.
     Recall from $(2,n)$ satisfying the norm condition \eqref{Eqn norm equality condition for general mu= (X,Y) in Case (N)}, we have 
     \begin{equation}\label{Eqn M_1 into M_2 by norm equality for (2,n)}
    \frac{ad}{2} n(n-M_2)\  =\  bc(M_1-1)-bc -2acn.
     \end{equation}
     Thereby, the coef. of $d_{1,1}$ by the previous two sentences  finally is $\frac{1}{A(2)}\big[ ad n(n-M_2) + ad(M_2-2n+1)\big] =\frac{2}{A(2)} \frac{ad}{2}(n-1)(n-M_2-1) =2\frac{a}{A(2)}C(1)$.
     Thus the equation we started by is $-\frac{C(1)}{A(2)}\times \Big[ B(2)d_{1,0}-2ad_{1,1}+ (2a+b) d_{0,1} \Big] = 0 $, which is: i) either triviality if $C(1)=0$, i.e. if $n=M_2+1$; or ii) equivalent to \eqref{Eqn length 4 at k=1} (which is a valid equation even at $n=2$, in connection to next paragraph).

     Suppose $n=2$. 
     We do not have \eqref{Eqn length 4 at k=2},  instead we have \eqref{Eqn length 3 at n}, and note that $C(2)=0$.
     We do not see $d_{2,1}$-th component in our $(d_{k,s})$. 
    In connection to this, we look back at our above simplification of equation \eqref{Eqn length 4 at k=2} --  occurring on the ultimate line $s+k=n=2$ (for the variables $d_{k,s}$) -- that $d_{2,1}$ therein brought-in only $d_{1,1}$ with coefficient $C(2)$.
    And $C(2)=0$ if $n=2$.
    So after that simplification, \eqref{Eqn length 4 at k=2} is equivalent to \eqref{Eqn length 3 at n}.
     Hence by the proof in case $n>2$, \eqref{Eqn length 3 at n} (at $n=2$) is equivalent to \eqref{Eqn length 4 at k=1}, completing the proof of this step.
\medskip\\
\underline{Step 5}. {\it Algorithm for building  solution tuples, by setting-up free variables} : 
We fix any $n\in \mathbb{N}$, and variables $(d_{k,s})_{\substack{0\leq k,s\leq n \\ s+k=n }}$ as in equations \eqref{Eqn length 4 at k=1}--\eqref{Eqn s-(k-1) geq 2}.
We begin by observing among those unknowns, that the variables $d_{k,k}$ for integers $0\leq k \leq \lfloor \frac{n}{2} \rfloor$, are totally $\Big\lceil \frac{n+1}{2} \Big\rceil$ many.
This number equals the lower bound \big(in $(2,n)$ case\big) in the theorem.
Our strategy in the rest of the proof is as follows.
\begin{note}\label{Note redundancies}
  1) In the below proof-lines, we first arbitrarily fix values for each of $d_{j,j}$'s (one at a time) for $0\leq j\leq \big\lfloor \frac{n}{2}\big\rfloor$.
    We iteratively determine the values of each $d_{k,s}$ by writing it implicitly as a linear function of $\big(d_{0,0},\ldots, d_{\lfloor \frac{n}{2}\rfloor}\big)$, and by using not all but a sub-collection of equations \eqref{Eqn length 4 at k=1}--\eqref{Eqn s-(k-1) geq 2} (we will indicate at every layer of construction below the unused equations).
    So, each choice of $\big(d_{0,0},\ldots, d_{\lfloor \frac{n}{2}\rfloor}\big)$ leads to a possible maximal vector; which certainly satisfies the used equations among \eqref{Eqn length 4 at k=1}--\eqref{Eqn s-(k-1) geq 2}. 
Assuming this works, now for the problem of whether the tuple of $d_{k,s}$'s built in this way, satisfies all \eqref{Eqn length 4 at k=1}--\eqref{Eqn s-(k-1) geq 2}, see the next point.\\
     $2$) Conversely, on every tuple $\big(d_{k,s}'\big)_{0\leq k,s\leq n\\ k+s\leq n}$ that yields a maximal vector, the used set of equations in the previous point do apply.
     As a result, $d_{k,s}'$ must assume the same values as computed in above point using $\big(d_{0,0},\ldots, d_{\lfloor \frac{n}{2}\rfloor}\big)$.
    Thus, number of linearly independent set of maximal vector tuples, is at most $\big\lceil \frac{n+1}{2} \rceil$.
    But this is the lower bound in \eqref{maxl vect count inequality}, and hence we have the desired equality in \eqref{maxl vect count inequality} in the settings of the theorem. So tuple $(d_{k,s})$ we build using $\big(d_{0,0},\ldots, d_{\lfloor \frac{n}{2}\rfloor, \lfloor \frac{n}{2} \rfloor }\big)$ below, will automatically satisfy the unused equations in 1).
\end{note}
We begin by fixing a value for $d_{0,0}$.
The value of $d_{1,0}$ is then uniquely determined by \eqref{Eqn length 2 M2} \big(equivalently by \eqref{Eqn length 2 M1}\big).
Visibly in Figs (A)--(G), no other equation involves solely \big($d_{0,0},\ d_{1,0}$\big). \smallskip\\
\begin{note} 1) In our proof below, we successively determine each $d_{k,s}$ (in terms of diagonals $d_{i,i}$) using one of the following sets of conditions: i)~either a single hook equation in Figs. (B) or (E) or (F) or (G); or ii) single edge equation in Fig. (A); or iii) the equation in Fig. (C) on $(d_{0,k}, d_{1, k-1},d_{k,0}, d_{k,1})$ together with the hook equation in Fig. (E) on $(d_{0,k}, d_{1, k-1}, d_{0, k-1})$ for $2\leq k$; or 
iv) the equation in Fig. (D) on $(d_{0,k}, d_{1, k-1},d_{k,0})$ together with the hook equation in Fig. (E) on $(d_{0,k}, d_{1, k-1}, d_{0, k-1})$. \\
 2) In such combinations of equations, reader might easily see as in the first iteration prior to this note, and similarly at every iteration below, that coefficients $A(k)$ or $A(s)$ or $-a$ or $(ka+b)$ (that we reciprocate) of $d_{k,s}$ are always non-zero.
 And this allows us to determine $d_{k,s}$ uniquely (in view of Note \ref{Note redundancies}); and hereafter, we omit writing non-vanishing of the coefficients in the previous line. 
\end{note}
 Continuing with the proof, we arbitrarily fix a value for the second free variable $d_{1,1}$; note $n\geq 2$.
 The value of $d_{0,1}$ is then uniquely determined -- in terms of afore-found $d_{1,0}$ (and of afore-fixed $d_{0,0}$) --  by \eqref{Eqn length 4 at k=1}.
 And again clearly from Figs. (A)--(G) there are no further equations involving these three variables.
Using the hook-shaped equations in Figs. (F) and (G) involving $d_{1,1}$, we derive the values of $d_{2,0}$ and $d_{0,2}$ in terms of the (fixed) pairs ($d_{1,0}$ and $d_{1,1}$) and resp. ($d_{0,1}$ and $d_{1,1}$).
The equations we used to compute each $d_{k,s}$ for $k+s\leq 2$ in the above iterations are indeed all the equations that arise when $n=2$, and thus they are automatically satisfied by sub-tuple $(d_{k,s})_{0\leq k+s\leq 2}$  we built until now \big(using freely chosen or initial variables $d_{0,0}$ and $d_{1,1}$\big).
Thus the solution space to \eqref{Eqn length 2 M1}--\eqref{Eqn s-(k-1) geq 2} when $n=2$ is 2-dimensional, which is in agreement with $\dim \mathfrak{g}_{2\alpha_1+2\alpha_2}+\dim \mathfrak{g}_{\alpha_1+\alpha_2}= 1+1=2$.

We explain using the below pictures on how to extend the above ideas when $n>2$: (1) We successively compute each $d_{k,s}$ denoted by $\bullet$ in figures at each step below, with using all the known $d_{i,j}$'s denoted by $\circ$ in previous steps and layers in those pictures.
(2) Each iteration begins by fixing a free variable $d_{i, i}$, which we denote by $\tiny{\fbox{i}}$ for $i\in \big\{1, \ \ldots,\  \lfloor \frac{n}{2}\rfloor\big\}$ in those pictures.

We begin by visualizing the part of the proof in $n=2$ case at the beginning of Step 5. 
\begin{center}
    \begin{figure}[H]
    \begin{subfigure}[t]{0.2\textwidth} 
\begin{tikzpicture}[scale=0.5] 
\draw[step=1, gray, ultra thin] (-0.2,-0.2) grid (2.7,2.7) ;
\draw[line width = 0.3mm]  (-0.25,-0.25) -- (-0.25,0.25) -- (0.25,0.25) -- (0.25, -0.25) -- (-0.25, -0.25) ;
\draw (0,0) node[anchor=center]{{\color{black} \bf\tiny 1}} ;
\draw[->, thick] (0,0) -- (0.7,0) ;
\draw[line width = 0.3mm] (0,0) -- (1,0) ;
\draw [fill = black] (1,0) circle (5pt)
 ;    
 \draw[dashed] (2,0)-- (0,2) ; 
 \end{tikzpicture}
   \caption*{\hspace*{-2cm} 1a) For $d_{1,0}$}\label{fig small}
    \end{subfigure}
                            \quad 
\begin{subfigure}[t]{0.2\textwidth} 
\begin{tikzpicture}[scale=0.5] 
\draw[step=1, gray, ultra thin] (-0.2,-0.2) grid (2.7,2.7) ;
\draw[line width = 0.3mm]  (-0.25,-0.25) -- (-0.25,0.25) -- (0.25,0.25) -- (0.25, -0.25) -- (-0.25, -0.25) ;
\draw (0,0) node[anchor=center]{{\color{black} \bf\tiny 1}} ;
\draw[line width = 0.3mm]  (1-0.25,1-0.25) -- (1-0.25,1+0.25) -- (1+0.25,1+0.25) -- (1+0.25, 1-0.25) -- (1-0.25, 1-0.25) ;
\draw (1,1) node[anchor=center]{{\color{black} \bf\tiny 2}} ;
\draw[->, thick] (1,1) -- (0.3,1) ;
\draw[->, thick] (1,0) -- (0.2,0.8) ;
\draw[->, thick] (1,0) -- (0,1) ;
\draw[line width = 0.3mm] (1,0) -- (1,0.7) ;
%\draw[line width = 0.3mm] (0,0) -- (1,0) ;
\draw[line width = 0.4mm] (0,1) -- (1,1) ;
\draw [thick] (1,0) circle (6pt)
 ;
 \draw [fill = black] (0,1) circle (5pt)
 ;
 \draw[dashed] (2,0) -- (0,2) ;
 \end{tikzpicture}
\caption*{\hspace*{-2cm} 1b) For $d_{1,1}$}
\end{subfigure}\quad 
\begin{subfigure}[t]{0.2\textwidth} 
\begin{tikzpicture}[scale=0.5] 
\draw[step=1, gray, ultra thin] (-0.2,-0.2) grid (2.7,2.7) ;
\draw[line width = 0.3mm]  (-0.25,-0.25) -- (-0.25,0.25) -- (0.25,0.25) -- (0.25, -0.25) -- (-0.25, -0.25) ;
\draw (0,0) node[anchor=center]{{\color{black} \bf\tiny 1}} ;
\draw[line width = 0.3mm]  (1-0.25,1-0.25) -- (1-0.25,1+0.25) -- (1.25,1+0.25) -- (1.25, 1-0.25) -- (1-0.25, 1-0.25) ;
\draw (1,1) node[anchor=center]{{\color{black} \bf\tiny 2}} ;
\draw[->, thick] (1,0) -- (1.7,0) ;
\draw[->, thick] (1,1) -- (1.7,0.3) ;
\draw[line width = 0.3mm] (1,1) -- (2,0) ;
\draw[line width = 0.3mm] (1,0) -- (2,0) ;
\draw [thick] (1,0) circle (6pt)
 ;
 \draw [thick] (0,1) circle (6pt)
 ;
\draw[dashed] (2,0) -- (0,2);
\draw[fill = black] (2,0) circle (5pt) ; 
\draw[line width = 0.4mm] (1,1) -- (1,0) ; 
 \end{tikzpicture}
\caption*{\hspace*{-2cm} 1c) For $d_{2,0}$}
 \end{subfigure}\quad 
 \begin{subfigure}[t]{0.2\textwidth} 
\begin{tikzpicture}[scale=0.5] 
\draw[step=1, gray, ultra thin] (-0.2,-0.2) grid (2.7,2.7) ;
\draw[line width = 0.3mm]  (-0.25,-0.25) -- (-0.25,0.25) -- (0.25,0.25) -- (0.25, -0.25) -- (-0.25, -0.25) ;
\draw (0,0) node[anchor=center]{{\color{black} \bf\tiny 1}} ;
\draw[line width = 0.3mm]  (1-0.25,1-0.25) -- (1-0.25,1+0.25) -- (1.25,1+0.25) -- (1.25, 1-0.25) -- (1-0.25, 1-0.25) ;
\draw (1,1) node[anchor=center]{{\color{black} \bf\tiny 2}} ;
\draw[->, thick] (0,1) -- (0,1.7) ;
\draw[->, thick] (1,1) -- (0.3, 1.7) ;
\draw[line width = 0.3mm] (1,1) -- (0,2) ;
\draw[line width = 0.3mm] (0,1) -- (0,2) ;
\draw [thick] (1,0) circle (6pt)
 ;
 \draw [thick] (2,0) circle (6pt)
 ;
 \draw [thick] (0,1) circle (6pt)
 ;
\draw[dashed] (2,0) -- (0,2);
\draw[fill = black] (0,2) circle (5pt) ; 
\draw[line width = 0.4mm] (1,1) -- (0,1) ; 
 \end{tikzpicture}
\caption*{\hspace*{-2cm} 1d) For $d_{0,2}$}
 \end{subfigure}
    \end{figure}
\end{center}
For $n\geq 3$ -- where-in equations \eqref{Eqn length 4 at k=2} and \eqref{Eqn length 4 at k} start appearing, unlike in $n=2$ sub-case --
we compute as shown in the two plots below, the values (using given $d_{0,0}, d_{0,1}$) of the following: (1)~$d_{2,1}$ by \eqref{Eqn length 2} at $s=1$ \big(recall we already verified \eqref{Eqn length 4 at k=2} to hold on $d_{0,2}, d_{1,1}, d_{2,1}, d_{2,0}$ in Step 4\big). (2) And then $d_{3,0}$ by hook equation \eqref{Eqn s- (k-1) leq -2} centered at $(2,0)$.
\begin{center}
    \begin{figure}[H]
    \begin{subfigure}[t]{0.2\textwidth} 
\begin{tikzpicture}[scale=0.5] 
\draw[step=1, gray, ultra thin] (-0.2,-0.2) grid (3.5,3.5) ;
\draw[line width = 0.3mm]  (-0.25,-0.25) -- (-0.25,0.25) -- (0.25,0.25) -- (0.25, -0.25) -- (-0.25, -0.25) ;
\draw (0,0) node[anchor=center]{{\color{black} \bf\tiny 1}} ;
\draw[line width = 0.3mm]  (1-0.25,1-0.25) -- (1-0.25,1+0.25) -- (1.25,1+0.25) -- (1.25, 1-0.25) -- (1-0.25, 1-0.25) ;
\draw (1,1) node[anchor=center]{{\color{black} \bf\tiny 2}} ;
%\draw[->, thick] (0,1) -- (0,1.7) ;
\draw[->, thick] (1,1) -- (1.7, 1) ;
\draw[line width = 0.3mm] (1,1) -- (2,1) ;
%\draw[line width = 0.3mm] (0,1) -- (0,2) ;
\draw [thick] (1,0) circle (6pt)
 ;
 \draw [thick] (2,0) circle (6pt)
 ;
 \draw [thick] (0,1) circle (6pt)
 ;
  \draw [thick] (0,2) circle (6pt)
 ;
%\draw[dashed] (2,0) -- (0,2);
\draw[fill = black] (2,1) circle (5pt) ; 
%\draw[line width = 0.4mm] (1,1) -- (0,1) ; 
 \end{tikzpicture}
\caption*{\hspace*{-1.5cm} 2a) For $d_{2,1}$}
 \end{subfigure}\quad 
   \begin{subfigure}[t]{0.2\textwidth} 
\begin{tikzpicture}[scale=0.5] 
\draw[step=1, gray, ultra thin] (-0.2,-0.2) grid (3.5,3.5) ;
\draw[line width = 0.3mm]  (-0.25,-0.25) -- (-0.25,0.25) -- (0.25,0.25) -- (0.25, -0.25) -- (-0.25, -0.25) ;
\draw (0,0) node[anchor=center]{{\color{black} \bf\tiny 1}} ;
\draw[line width = 0.3mm]  (1-0.25,1-0.25) -- (1-0.25,1+0.25) -- (1.25,1+0.25) -- (1.25, 1-0.25) -- (1-0.25, 1-0.25) ;
\draw (1,1) node[anchor=center]{{\color{black} \bf\tiny 2}} ;
%\draw[->, thick] (0,1) -- (0,1.7) ;
\draw[->, thick] (2,0) -- (2.7,0) ;
\draw[->, thick] (2,1) -- (2.7,0.3) ;
\draw[line width = 0.3mm] (2,0) -- (3,0) ;
\draw[line width = 0.3mm] (2,0) -- (2,1) ;
\draw[line width = 0.3mm] (2,1) -- (3,0) ;
%\draw[line width = 0.3mm] (0,1) -- (0,2) ;
\draw [thick] (1,0) circle (6pt)
 ;
 \draw [thick] (2,0) circle (6pt)
 ;
 \draw [thick] (0,1) circle (6pt)
 ;
  \draw [thick] (0,2) circle (6pt)
 ;
  \draw [thick] (2,1) circle (6pt)
 ;
%\draw[dashed] (2,0) -- (0,2);
\draw[fill = black] (3,0) circle (5pt) ; 
%\draw[line width = 0.4mm] (1,1) -- (0,1) ; 
 \end{tikzpicture}
\caption*{\hspace*{-1.5cm} 2b) For $d_{3,0}$}
 \end{subfigure}
    \end{figure}
\end{center}
Note that no equations other than the ones used until now apply on the variables found in the ultimate figure 2b) above.
And in saying so, we remind the reader of Step 4 on \eqref{Eqn length 4 at k=2} when $n\geq 3$, and that the components found up to Fig. 2b) are unaffected by \eqref{Eqn length 4 at k=2}. 

Suppose $n=3$.
We are required to compute left-over variables $d_{(1,2)}$ and $d_{(0,3)}$.
Observe there are exactly two equations that we did not use yet, which are as follows: \eqref{Eqn length 3 at n} at $n=3$ and \eqref{Eqn s- (k-1) leq -2} for $(k-1, s)= (0,2)$.
More precisely,
\[
 - \ a \ d_{1,2}\ + \ (3a+b) \ d_{0,3}\ \ = \ \ - B(3)\ d_{3,0} \qquad \text{and}  \qquad  A(1)\ d_{1,2}\ +\ A(3)\ d_{0,3}\ \ =\ \  - C(2) \ d_{0,2}.
\]
These form consistent system of two equations in variables $d_{1,2}$ and $d_{0,3}$ because $A(1)=c>0$ and $A(3)=3(c+d)>0$.
So $d_{1,2}$ and $d_{0,3}$ can be uniquely found (in terms of their predecessors).

Henceforth, we assume $n\geq 4$, and continue building solution-tuple from Fig. 2b) stage.
Here we cannot compute either of $d_{1,2}$ and $d_{0,3}$, in an analogous way to that in the previous paragraph.
So the need comes to choose the next free variable.
We fix it to be $d_{2,2}$, and then using it we compute uniquely along points (1)--(5) below, all of the surfacing $d_{k,s}$.
The plots 3a)--3g) portray the procedure in those points, with 4a) showing the cumulative picture of 3a)--3g). \\   
(1) We first compute $d_{3,1}$ by hook equation \eqref{Eqn above diag}. \quad (2) And then $d_{4,0}$ by hook equation \eqref{Eqn s- (k-1) leq -2}.\\
(3) Next, we compute $d_{0,3}$ and $d_{1,2}$ for $n\geq 4$ here, following the idea of using two consistent system of equations as in the proof for the case $n=3$ (in the penultimate paragraph above).
Namely, by using \eqref{Eqn length 4 at k} at $k=3$ and \eqref{Eqn s- (k-1) leq -2} at $(k-1,s)= (0,2)$, we solve the following pair of equations.
\[
 - \ a \ d_{1,2}\ + \ (3a+b) \ d_{0,3}\ \ = \ \ - B(3)\ d_{3,0} \ +\ a\ d_{3,1}  \qquad \text{and}  \qquad  A(1)\ d_{1,2}\ +\ A(3)\ d_{0,3}\ \ =\ \  - C(2) \ d_{0,2}.
\]
(4) We apply two hook equations \eqref{Eqn below diag} and then \eqref{Eqn s- (k-1) leq -2} centered at $(k-1,s)\ = (1,2), \ (0,3)$ respectively, and determine $d_{1,3}$ and then $d_{0,4}$.
This completes the construction when $n=4$.\\
(5)  Suppose $n\geq 5$.
We continue the construction as follows.
$d_{3,2}$ is written using an edge equation in Fig. (A). 
Next $d_{4,1}\text{ and then } d_{5,0}$ are given by hook equations in Fig. (E) centered at $d_{3,1},\ d_{4,0}$.
\smallskip\\
The ideas/methodology in (1)--(5) are shown successively in plots 3a)--3e) below:
\begin{center}
\begin{figure}[H]
  \begin{subfigure}[t]{0.2\textwidth} 
\begin{tikzpicture}[scale=0.5] 
\draw[step=1, gray, ultra thin] (-0.2,-0.2) grid (4.5,4.5) ;
\draw[line width = 0.3mm]  (-0.25,-0.25) -- (-0.25,0.25) -- (0.25,0.25) -- (0.25, -0.25) -- (-0.25, -0.25) ;
\draw (0,0) node[anchor=center]{{\color{black} \bf\tiny 1}} ;
\draw[line width = 0.3mm]  (1-0.25,1-0.25) -- (1-0.25,1+0.25) -- (1.25,1+0.25) -- (1.25, 1-0.25) -- (1-0.25, 1-0.25) ;
\draw[line width = 0.3mm]  (2-0.25,2-0.25) -- (2-0.25,2+0.25) -- (2+0.25,2+0.25) -- (2+0.25, 2-0.25) -- (2-0.25, 2-0.25) ;
%\draw (1,2) node[anchor=center]{{\color{black} \bf\tiny 3}} ;
\draw (1,1) node[anchor=center]{{\color{black} \bf\tiny 2}} ;
\draw (2,2) node[anchor=center]{{\color{black} \bf\tiny 3}} ;
%\draw[->, thick] (0,1) -- (0,1.7) ;
\draw[->, thick] (2,1) -- (2.7,1) ;
\draw[->, thick] (2,2) -- (2.8, 1.2) ;
\draw[line width = 0.3mm] (2,1) -- (3,1) ;
\draw[line width = 0.3mm] (2,1) -- (2,2) ;
\draw[line width = 0.3mm] (2,2) -- (3,1) ;
%\draw[line width = 0.3mm] (0,1) -- (0,2) ;
\draw [thick] (1,0) circle (6pt)
 ;
 \draw [thick] (2,0) circle (6pt)
 ;
 \draw [thick] (0,1) circle (6pt)
 ;
  \draw [thick] (0,2) circle (6pt)
 ;
  \draw [thick] (2,1) circle (6pt)
 ;
 \draw [thick] (3,0) circle (6pt)
 ;
%\draw[dashed] (2,0) -- (0,2);
\draw[fill = black] (3,1) circle (5pt) ; 
%\draw[line width = 0.4mm] (1,1) -- (0,1) ; 
 \end{tikzpicture}
\caption*{\hspace*{-1.5cm} 3a) For $d_{3,1}$}
 \end{subfigure}\ \
 \begin{subfigure}[t]{0.2\textwidth} 
\begin{tikzpicture}[scale=0.5] 
\draw[step=1, gray, ultra thin] (-0.2,-0.2) grid (4.5,4.5) ;
\draw[line width = 0.3mm]  (-0.25,-0.25) -- (-0.25,0.25) -- (0.25,0.25) -- (0.25, -0.25) -- (-0.25, -0.25) ;
\draw (0,0) node[anchor=center]{{\color{black} \bf\tiny 1}} ;
\draw[line width = 0.3mm]  (1-0.25,1-0.25) -- (1-0.25,1+0.25) -- (1.25,1+0.25) -- (1.25, 1-0.25) -- (1-0.25, 1-0.25) ;
\draw[line width = 0.3mm]  (2-0.25,2-0.25) -- (2-0.25,2+0.25) -- (2+0.25,2+0.25) -- (2+0.25, 2-0.25) -- (2-0.25, 2-0.25) ;
%\draw (1,2) node[anchor=center]{{\color{black} \bf\tiny 3}} ;
\draw (1,1) node[anchor=center]{{\color{black} \bf\tiny 2}} ;
\draw (2,2) node[anchor=center]{{\color{black} \bf\tiny 3}} ;
%\draw[->, thick] (0,1) -- (0,1.7) ;
\draw[->, thick] (3,0) -- (3.7,0) ;
\draw[->, thick] (3,1) -- (3.8, 0.2) ;
\draw[line width = 0.3mm] (3,0) -- (4,0) ;
\draw[line width = 0.3mm] (3,1) -- (4,0) ;
\draw[line width = 0.3mm] (3,0) -- (3,1) ;
%\draw[line width = 0.3mm] (0,1) -- (0,2) ;
\draw [thick] (1,0) circle (6pt)
 ;
 \draw [thick] (2,0) circle (6pt)
 ;
 \draw [thick] (0,1) circle (6pt)
 ;
  \draw [thick] (0,2) circle (6pt)
 ;
  \draw [thick] (2,1) circle (6pt)
 ;
 \draw [thick] (3,0) circle (6pt)
 ;
  \draw [thick] (3,1) circle (6pt)
 ;
%\draw[dashed] (2,0) -- (0,2);
\draw[fill = black] (4,0) circle (5pt) ; 
%\draw[line width = 0.4mm] (1,1) -- (0,1) ; 
 \end{tikzpicture}
\caption*{\hspace*{-1.5cm} 3b) For $d_{4,0}$}
 \end{subfigure}\ \
  \begin{subfigure}[t]{0.2\textwidth} 
\begin{tikzpicture}[scale=0.5] 
\draw[step=1, gray, ultra thin] (-0.2,-0.2) grid (4.5,4.5) ;
\draw[line width = 0.3mm]  (-0.25,-0.25) -- (-0.25,0.25) -- (0.25,0.25) -- (0.25, -0.25) -- (-0.25, -0.25) ;
\draw (0,0) node[anchor=center]{{\color{black} \bf\tiny 1}} ;
\draw[line width = 0.3mm]  (1-0.25,1-0.25) -- (1-0.25,1+0.25) -- (1.25,1+0.25) -- (1.25, 1-0.25) -- (1-0.25, 1-0.25) ;
\draw[line width = 0.3mm]  (2-0.25,2-0.25) -- (2-0.25,2+0.25) -- (2+0.25,2+0.25) -- (2+0.25, 2-0.25) -- (2-0.25, 2-0.25) ;
\draw (1,1) node[anchor=center]{{\color{black} \bf\tiny 2}} ;
\draw (2,2) node[anchor=center]{{\color{black} \bf\tiny 3}} ;
%\draw[->, thick] (0,1) -- (0,1.7) ;
%\draw[->, thick] (3,0.7) -- (3, 0) ;
%\draw[->, thick] (1,2) -- (0.3,2.7) ;
%\draw[line width = 0.3mm] (1,2) -- (0,3) ;
%\draw[line width = 0.3mm] (0,2) -- (1,2) ;
%\draw[line width = 0.3mm] (3,0) -- (3,1) ;
%\draw[line width = 0.3mm] (0,1) -- (0,2) ;
\draw [thick] (1,0) circle (6pt)
 ;
 \draw [thick] (2,0) circle (6pt)
 ;
 \draw [thick] (0,1) circle (6pt)
 ;
  \draw [thick] (0,2) circle (6pt)
 ;
  \draw [thick] (2,1) circle (6pt)
 ;
 \draw [thick] (3,0) circle (6pt)
 ;
 \draw [fill=black] (0,3) circle (5pt)
 ;
  \draw [fill=black] (1,2) circle (5pt)
 ;
  \draw [thick] (4,0) circle (6pt)
 ;
%\draw[dashed] (2,0) -- (0,2);
\draw[thick] (3,1) circle (6pt) ; 
\draw[line width=0.3mm] (0,2.7) -- (-0.7,2.2) -- (-0.7, -0.7) -- (3,-0.7) -- (3,-0.3) ;
%\draw[line width = 0.4mm] (1,1) -- (0,1) ; 
\draw[->, thick] (-0.7,1) -- (-0.7, 1.5) ;
\draw[->, thick] (-0.7,2.2) -- (-0.35, 2.45) ;
\draw[->, thick] (0,2) -- (0.5, 2.5) ;
\draw[->, thick] (1.5,-0.7) -- (1,-0.7) ;
\draw[dashed, thick] (2.7, 1.3) -- (3.3, 1.3) -- (3.3, -0.3) -- (2.7, -0.3) -- (2.7, 1.3) ;
\draw[dashed, thick] (-0.4, 3) -- (0, 3.4) -- (1.4, 2) -- (1, 1.6) -- (-0.4, 3) ;
 \end{tikzpicture}
\caption*{\hspace*{-1cm} 3c) For $d_{1,2}$ and $d_{0,3}$}
 \end{subfigure}~  
\begin{subfigure}[t]{0.19\textwidth} 
\begin{tikzpicture}[scale=0.5] 
\draw[step=1, gray, ultra thin] (-0.2,-0.2) grid (4.5,4.5) ;
\draw[line width = 0.3mm]  (-0.25,-0.25) -- (-0.25,0.25) -- (0.25,0.25) -- (0.25, -0.25) -- (-0.25, -0.25) ;
\draw (0,0) node[anchor=center]{{\color{black} \bf\tiny 1}} ;
\draw[line width = 0.3mm]  (1-0.25,1-0.25) -- (1-0.25,1+0.25) -- (1.25,1+0.25) -- (1.25, 1-0.25) -- (1-0.25, 1-0.25) ;
\draw[line width = 0.3mm]  (2-0.25,2-0.25) -- (2-0.25,2+0.25) -- (2+0.25,2+0.25) -- (2+0.25, 2-0.25) -- (2-0.25, 2-0.25) ;
\draw (2,2) node[anchor=center]{{\color{black} \bf\tiny 3}} ;
\draw (1,1) node[anchor=center]{{\color{black} \bf\tiny 2}} ;
\draw[thick] (1,2) circle (6pt) ;
%\draw[->, thick] (0,1) -- (0,1.7) ;
%\draw[->, thick] (2,1.3) -- (2, 1.7) ;
\draw[->, thick] (1.7,2.3) -- (1.3,2.7 ) ;
\draw[->, thick] (0.8,3.3) -- (0.3, 3.7) ;
\draw[->, thick] (0,3.3) -- (0,3.7) ;
\draw[->, thick] (1,2.3) -- (1,2.7) ;
%\draw[->, thick] (2.8,1.2) -- (2.3, 1.7) ;
%\draw[line width = 0.3mm] (0,3.2) -- (0,4) ;
%\draw[line width = 0.3mm] (1,2) -- (1,3) ;
%\draw[line width = 0.3mm] (2,2) -- (1,3) ;
%\draw[line width = 0.3mm] (3,1) -- (2,2) ;
%\draw[line width = 0.3mm] (3,1) -- (0,4) ;
%\draw[line width = 0.3mm] (3,0) -- (4,0) ;
%\draw[line width = 0.3mm] (3,0) -- (3,1) ;
%\draw[line width = 0.3mm] (4,0) -- (3,1) ;
%\draw[line width = 0.3mm] (0,1) -- (0,2) ;
\draw [thick] (1,0) circle (6pt)
 ;
 \draw [thick] (2,0) circle (6pt)
 ;
 \draw [thick] (0,1) circle (6pt)
 ;
  \draw [thick] (0,2) circle (6pt)
 ;
  \draw [thick] (2,1) circle (6pt)
 ;
 \draw [thick] (3,0) circle (6pt)
 ;
 \draw [thick] (3,1) circle (6pt)
 ;
 \draw [thick] (0,3) circle (6pt)
 ;
 \draw [thick] (4,0) circle (6pt)
 ;
%\draw[dashed] (2,0) -- (0,2);

\draw[fill = black] (1,3) circle (5pt) ; 
; 
\draw[fill = black] (0,4) circle (5pt) ; 
%\draw[line width = 0.4mm] (1,1) -- (0,1) ; 
 \end{tikzpicture}
\caption*{\hspace*{-0.5cm} 3d) For $d_{1,3}$ and $d_{0,4}$}
 \end{subfigure} \end{figure}
 \end{center}
 \begin{center}
\begin{figure}[H]
 \begin{subfigure}[t]{0.19\textwidth} 
\begin{tikzpicture}[scale=0.5] 
\draw[step=1, gray, ultra thin] (-0.2,-0.2) grid (5.5,5.5) ;
\draw[line width = 0.3mm]  (-0.25,-0.25) -- (-0.25,0.25) -- (0.25,0.25) -- (0.25, -0.25) -- (-0.25, -0.25) ;
\draw (0,0) node[anchor=center]{{\color{black} \bf\tiny 1}} ;
\draw[line width = 0.3mm]  (1-0.25,1-0.25) -- (1-0.25,1+0.25) -- (1.25,1+0.25) -- (1.25, 1-0.25) -- (1-0.25, 1-0.25) ;
\draw[line width = 0.3mm]  (2-0.25,2-0.25) -- (2-0.25,2+0.25) -- (2+0.25,2+0.25) -- (2+0.25, 2-0.25) -- (2-0.25, 2-0.25) ;
\draw (2,2) node[anchor=center]{{\color{black} \bf\tiny 3}} ;
\draw (1,1) node[anchor=center]{{\color{black} \bf\tiny 2}} ;
\draw (2,2) node[anchor=center]{{\color{black} \bf\tiny 3}} ;
%\draw[->, thick] (0,1) -- (0,1.7) ;
\draw[->, thick] (2.3,2) -- (2.7, 2) ;
\draw[->, thick] (3.3, 1) -- (3.7,1) ;
\draw[->, thick] (3.3,1.7) -- (3.7, 1.3) ;
\draw[->, thick] (4.3, 0.7) -- (4.7,0.3) ;
\draw[->, thick] (4.3,0) -- (4.7,0) ;
%\draw[->, thick] (3.3,0) -- (3.7, 0) ;
%\draw[line width = 0.3mm] (0,3.2) -- (0,4) ;
%\draw[line width = 0.3mm] (1,2) -- (1,3) ;
%\draw[line width = 0.3mm] (2,2) -- (1,3) ;
%\draw[line width = 0.3mm] (3,1) -- (2,2) ;
%\draw[line width = 0.3mm] (3,1) -- (0,4) ;
%\draw[line width = 0.3mm] (3,0) -- (4,0) ;
%\draw[line width = 0.3mm] (3,0) -- (3,1) ;
%\draw[line width = 0.3mm] (4,0) -- (3,1) ;
%\draw[line width = 0.3mm] (0,1) -- (0,2) ;
\draw [thick] (1,0) circle (6pt)
 ;
 \draw [thick] (2,0) circle (6pt)
 ;
 \draw [thick] (0,1) circle (6pt)
 ;
  \draw [thick] (0,2) circle (6pt)
 ;
  \draw [thick] (2,1) circle (6pt)
 ;
 \draw [thick] (3,0) circle (6pt)
 ;
 \draw [thick] (3,1) circle (6pt)
 ;
 \draw [thick] (0,3) circle (6pt)
 ;
 \draw [thick] (1,2) circle (6pt)
 ;
%\draw[dashed] (2,0) -- (0,2);
\draw [thick] (0,4) circle (6pt) ; 
\draw [thick] (1,3) circle (6pt) ; 
\draw  [thick] (4,0) circle (6pt) ;
\draw[fill=black] (3,2) circle (5pt) ; 
\draw[fill=black] (4,1) circle (5pt) ;
\draw[fill=black] (5,0) circle (5pt) ;
%\draw[line width = 0.4mm] (1,1) -- (0,1) ; 
%\draw (6.5,2) node[anchor=center] {$\cdots$} ; \draw[->, thick] (7.3,2)--(8.5,2) ; \draw (9.5,2) node[anchor=center] {$\cdots$} ;
 \end{tikzpicture}
\caption*{3e) For $d_{3,2}, d_{4,1}, $\\ $d_{5,0}$
when $n\geq 5$}
 \end{subfigure}\hspace*{0.5cm}
  \begin{subfigure}[t]{0.19\textwidth} 
\begin{tikzpicture}[scale=0.5] 
\draw[step=1, gray, ultra thin] (-0.2,-0.2) grid (5.5,5.5) ;
\draw[line width = 0.3mm]  (-0.25,-0.25) -- (-0.25,0.25) -- (0.25,0.25) -- (0.25, -0.25) -- (-0.25, -0.25) ;
\draw (0,0) node[anchor=center]{{\color{black} \bf\tiny 1}} ;
\draw[line width = 0.3mm]  (1-0.25,1-0.25) -- (1-0.25,1+0.25) -- (1.25,1+0.25) -- (1.25, 1-0.25) -- (1-0.25, 1-0.25) ;
\draw[line width = 0.3mm]  (2-0.25,2-0.25) -- (2-0.25,2+0.25) -- (2+0.25,2+0.25) -- (2+0.25, 2-0.25) -- (2-0.25, 2-0.25) ;
\draw (2,2) node[anchor=center]{{\color{black} \bf\tiny 3}} ;
\draw (1,1) node[anchor=center]{{\color{black} \bf\tiny 2}} ;
\draw (2,2) node[anchor=center]{{\color{black} \bf\tiny 3}} ;
%\draw[->, thick] (0,1) -- (0,1.7) ;
%\draw[->, thick] (4.3,0) -- (4.7,0) ;
%\draw[->, thick] (3.3,0) -- (3.7, 0) ;
%\draw[line width = 0.3mm] (0,3.2) -- (0,4) ;
%\draw[line width = 0.3mm] (1,2) -- (1,3) ;
%\draw[line width = 0.3mm] (2,2) -- (1,3) ;
%\draw[line width = 0.3mm] (3,1) -- (2,2) ;
%\draw[line width = 0.3mm] (3,1) -- (0,4) ;
%\draw[line width = 0.3mm] (3,0) -- (4,0) ;
%\draw[line width = 0.3mm] (3,0) -- (3,1) ;
%\draw[line width = 0.3mm] (4,0) -- (3,1) ;
%\draw[line width = 0.3mm] (0,1) -- (0,2) ;
\draw [thick] (1,0) circle (6pt)
 ;
 \draw [thick] (2,0) circle (6pt)
 ;
 \draw [thick] (0,1) circle (6pt)
 ;
  \draw [thick] (0,2) circle (6pt)
 ;
  \draw [thick] (2,1) circle (6pt)
 ;
 \draw [thick] (3,0) circle (6pt)
 ;
 \draw [thick] (3,1) circle (6pt)
 ;
 \draw [thick] (0,3) circle (6pt)
 ;
 \draw [thick] (1,2) circle (6pt)
 ;
%\draw[dashed] (2,0) -- (0,2);
\draw [thick] (0,4) circle (6pt) ; 
\draw [thick] (1,3) circle (6pt) ; 
\draw  [thick] (4,0) circle (6pt) ;
\draw [thick] (3,2) circle (6pt) ; 
\draw [thick] (4,1) circle (6pt) ;
\draw[thick] (5,0) circle (6pt) ;
\draw [fill=black] (0,5) circle (5pt) ;
\draw [fill=black] (1,4) circle (5pt) ; 
\draw[->, thick] (0,4) -- (0.5, 4.5) ;
\draw[line width=0.3mm] (0,4.7) -- (-0.7,4.2) -- (-0.7, -0.7) -- (5,-0.7) -- (5,-0.3) ;
\draw[dashed, thick] (4.7, 0.5) -- (5.3, 0.5) -- (5.3, -0.3) -- (4.7, -0.3) -- (4.7, 0.5) ;
\draw[dashed, thick] (-0.4, 5) -- (0, 5.4) -- (1.4, 4) -- (1, 3.6) -- (-0.4, 5) ;
\draw[->, thick] (-0.7,1) -- (-0.7, 2) ;
\draw[->, thick] (-0.7,4.2) -- (-0.35, 4.45) ;
\draw[->, thick] (3,-0.7) -- (2.5,-0.7) ;
%\draw[line width = 0.4mm] (1,1) -- (0,1) ; 
%\draw (6.5,2) node[anchor=center] {$\cdots$} ; \draw[->, thick] (7.3,2)--(8.5,2) ; \draw (9.5,2) node[anchor=center] {$\cdots$} ;
 \end{tikzpicture}
\caption*{3f) For $d_{0,5}, d_{1,4}$\\ when $n=5$}
 \end{subfigure}\hspace*{0.5cm}
   \begin{subfigure}[t]{0.19\textwidth} 
\begin{tikzpicture}[scale=0.5] 
\draw[step=1, gray, ultra thin] (-0.2,-0.2) grid (5.5,5.5) ;
\draw[line width = 0.3mm]  (-0.25,-0.25) -- (-0.25,0.25) -- (0.25,0.25) -- (0.25, -0.25) -- (-0.25, -0.25) ;
\draw (0,0) node[anchor=center]{{\color{black} \bf\tiny 1}} ;
\draw[line width = 0.3mm]  (1-0.25,1-0.25) -- (1-0.25,1+0.25) -- (1.25,1+0.25) -- (1.25, 1-0.25) -- (1-0.25, 1-0.25) ;
\draw[line width = 0.3mm]  (2-0.25,2-0.25) -- (2-0.25,2+0.25) -- (2+0.25,2+0.25) -- (2+0.25, 2-0.25) -- (2-0.25, 2-0.25) ;
\draw (2,2) node[anchor=center]{{\color{black} \bf\tiny 3}} ;
\draw (1,1) node[anchor=center]{{\color{black} \bf\tiny 2}} ;
\draw (2,2) node[anchor=center]{{\color{black} \bf\tiny 3}} ;
%\draw[->, thick] (0,1) -- (0,1.7) ;
%\draw[->, thick] (4.3,0) -- (4.7,0) ;
%\draw[->, thick] (3.3,0) -- (3.7, 0) ;
%\draw[line width = 0.3mm] (0,3.2) -- (0,4) ;
%\draw[line width = 0.3mm] (1,2) -- (1,3) ;
%\draw[line width = 0.3mm] (2,2) -- (1,3) ;
%\draw[line width = 0.3mm] (3,1) -- (2,2) ;
%\draw[line width = 0.3mm] (3,1) -- (0,4) ;
%\draw[line width = 0.3mm] (3,0) -- (4,0) ;
%\draw[line width = 0.3mm] (3,0) -- (3,1) ;
%\draw[line width = 0.3mm] (4,0) -- (3,1) ;
%\draw[line width = 0.3mm] (0,1) -- (0,2) ;
\draw [thick] (1,0) circle (6pt)
 ;
 \draw [thick] (2,0) circle (6pt)
 ;
 \draw [thick] (0,1) circle (6pt)
 ;
  \draw [thick] (0,2) circle (6pt)
 ;
  \draw [thick] (2,1) circle (6pt)
 ;
 \draw [thick] (3,0) circle (6pt)
 ;
 \draw [thick] (3,1) circle (6pt)
 ;
 \draw [thick] (0,3) circle (6pt)
 ;
 \draw [thick] (1,2) circle (6pt)
 ;
 \draw [thick] (0,5) circle (6pt)
 ;
 \draw [thick] (1,4) circle (6pt)
 ;
%\draw[dashed] (2,0) -- (0,2);
\draw [thick] (0,4) circle (6pt) ; 
\draw [thick] (1,3) circle (6pt) ; 
\draw  [thick] (4,0) circle (6pt) ;
\draw [thick] (3,2) circle (6pt) ; 
\draw [thick] (4,1) circle (6pt) ;
\draw[thick] (5,0) circle (6pt) ;
\draw [fill=black] (2,3) circle (5pt) ;
\draw[->, thick] (1.3,3) -- (1.7,3) ;
\draw[->, thick] (1.3,3.7) -- (1.7,3.3) ;
%\draw[line width = 0.4mm] (1,1) -- (0,1) ; 
%\draw (6.5,2) node[anchor=center] {$\cdots$} ; \draw[->, thick] (7.3,2)--(8.5,2) ; \draw (9.5,2) node[anchor=center] {$\cdots$} ;
 \end{tikzpicture}
\caption*{3g) For $d_{2,3}$\\ when $n=5$}
 \end{subfigure}\end{figure}\end{center}
 (6) Suppose $n=5$.
Simultaneously using \eqref{Eqn length 3 at n} \big(Fig. (D)\big) at $n=5$ and hook equation \eqref{Eqn s- (k-1) leq -2} at $(k-1,s)= (0,4)$, we compute uniquely -- as in point (3) above -- the variables $d_{0,5}$ and $d_{1,4}$.
This is shown in Fig. 3f).
And then $d_{2,3}$ is computed using hook equation centered at $d_{1,3}$, which is shown by Fig. 3g).\\
(7) When $n\geq 6$, the variables found in points (1)--(5) \big(that is, all the plotted variables in 3e)\big) are the only ones that can be found, using $d_{2,2}$ and those in the line-segments along $s+k\in \{0,1,2,3\}$ in previous layers.  
 \begin{center}
 \begin{figure}[H]
\hspace*{-3.5cm} \begin{subfigure}[t]{0.2\textwidth} 
\begin{tikzpicture}[scale=1] 
\draw[fill=gray!30] (-0.3, 2.7) -- (-0.3, -0.4) -- (3.4 , -0.4) -- (3.8, -0.4) -- (2, 1.6) -- ( 0.7, 1.6) -- (-0.3, 2.7) ;
\draw[line width = 0.3mm]  (-0.25,-0.25) -- (-0.25,0.25) -- (0.25,0.25) -- (0.25, -0.25) -- (-0.25, -0.25) ;
\draw[step=1, gray, ultra thin] (-0.2,-0.2) grid (5.5,5.5) ;
\draw (0,0) node[anchor=center]{{\color{black} \bf\tiny 1}} ;
\draw[line width = 0.3mm]  (1-0.25,1-0.25) -- (1-0.25,1+0.25) -- (1.25,1+0.25) -- (1.25, 1-0.25) -- (1-0.25, 1-0.25) ;
\draw[line width = 0.3mm]  (2-0.25,2-0.25) -- (2-0.25,2+0.25) -- (2+0.25,2+0.25) -- (2+0.25, 2-0.25) -- (2-0.25, 2-0.25) ;

\draw (2,2) node[anchor=center]{{\color{black} \bf\tiny 3}} ;
\draw (1,1) node[anchor=center]{{\color{black} \bf\tiny 2}} ;
\draw (2,2) node[anchor=center]{{\color{black} \bf\tiny 3}} ;
\draw [thick] (1,0) circle (6pt)
 ;
 \draw [thick] (2,0) circle (6pt)
 ;
 \draw [thick] (0,1) circle (6pt)
 ;
  \draw [thick] (0,2) circle (6pt)
 ;
  \draw [thick] (2,1) circle (6pt)
 ;
 \draw [thick] (3,0) circle (6pt)
 ;
 \draw [thick] (3,1) circle (6pt)
 ;
 \draw [thick] (0,3) circle (6pt)
 ;
 \draw [thick] (1,2) circle (6pt)
 ;
 % \draw [thick] (0,5) circle (6pt)
 ;
 % \draw [thick] (1,4) circle (6pt)
 ;
%\draw[dashed] (2,0) -- (0,2);
\draw [thick] (0,4) circle (6pt) ; 
\draw [thick] (1,3) circle (6pt) ; 
\draw [thick] (4,0) circle (6pt) ;
\draw [thick] (3,2) circle (6pt) ; 
\draw [thick] (4,1) circle (6pt) ;
\draw [thick] (5,0) circle (6pt) ;
%\draw[line width = 0.4mm] (1,1) -- (0,1) ; 
\draw[line width = 0.3mm]  (0.8, 2.2) -- (0,3) ;
\draw[line width = 0.3mm]  (3,0) -- (3,0.8) ;
\draw[-> , thick]  (3.3,0.7) -- (3.7,0.3) ;
\draw[line width = 0.3mm]   (4,0) -- (0,4) ;
%\draw[-> , thick]  (4,0) -- (3.4, 0.6) ;
\draw[->, thick] (1, 3) -- (0.4, 3.6) ;
\draw[->> , thick]  (3,-0.7) -- (2.5, -0.7) ;
\draw[->> , thick]  (3,0.6) -- (3, 0.3) ;
\draw[->> , thick]  (0.3,2.7) -- (0.7, 2.3) ;
\draw[line width = 0.3mm] (2,2) -- (3,2) -- (5,0) ;
\draw[->>>> , thick]  (2,2) -- (2.7, 2) ;
\draw[-> , thick]  (2+0.3,2-0.3) -- (2+0.6, 2-0.6) ;
\draw [->>>, thick] (1.7, 2.3) -- (1.3, 2.7) ;
\draw [->>, thick] (0.7, 3.3) -- (0.25, 3.75) ;
\draw [thick] (2.5,2.5) node [anchor=center] {\bf {\tiny (IV)}};
\draw [thick] (1.2,2.8) node [anchor=west] {\bf {\tiny (III)}};
\draw [thick] (0.5,3.5) node [anchor=west] {\bf {\tiny (III)}};
\draw[->>>>>, thick] (3,2) -- (3.7, 1.3) ;
\draw [thick] (3.5,1.5) node [anchor=west] {\bf {\tiny (V)}};
\draw[->>>>>, thick] (4,1) -- (4.7, 0.3) ;
\draw [thick] (4.5,0.5) node [anchor=west] {\bf {\tiny (V)}};
\draw[->, thick] (4.4,0.6) -- (4.7, 0.3) ;
\draw[line width=0.3mm] (0,3) -- (-0.7,3) -- (-0.7, -0.7) -- (3,-0.7) -- (3,0) ;
\draw [thick] (2.9,1.5) node[anchor=center] {\bf {\tiny (I) }};
\draw [thick] (3.9,0.5) node[anchor=center] {\bf {\tiny (I) }};
\draw [thick] (3,-0.7) node[anchor=west] {\bf {\tiny (II) }};
%\draw[line width=0.3mm] (5, -0.2) -- (5,-1) -- (-1,-1) -- (-1, 5) -- (0,5) -- (0.85, 4.15);
%\draw[->>>>>>, thick] (5,-0.2) -- (5, -1) ;
%\draw[->>>>>>, thick] (0.2,4.8) -- (0.7, 4.3) ;
%\draw [thick] (0.5,4.5) node[anchor=west] {\bf {\tiny (VI) }};
%\draw [thick] (5,-1) node[anchor=west] {\bf {\tiny (VI) }};
%\draw[->>>>>>>, thick] (1.1,3.9) -- (1.7, 3.3) ;
%\draw[line width=0.3mm] (1,4) -- (2,3);
%\draw [thick] (1.5,3.5) node[anchor=west] {\bf {\tiny (VII) }};
%\draw[thick] (2,3) circle (6pt);
 \end{tikzpicture}
\caption*{\text{4a) Cumulative construction after adding }$d_{2,2}$ \text{ when }$n\geq 6$}
 \end{subfigure}\hspace*{5cm} 
 \begin{subfigure}[t]{0.2\textwidth} 
\begin{tikzpicture}[scale=1] 
\draw[fill=gray!30] (-0.3, 2.7) -- (-0.3, -0.4) -- (3.4 , -0.4) -- (3.8, -0.4) -- (2, 1.6) -- ( 0.7, 1.6) -- (-0.3, 2.7) ;
\draw[line width = 0.3mm]  (-0.25,-0.25) -- (-0.25,0.25) -- (0.25,0.25) -- (0.25, -0.25) -- (-0.25, -0.25) ;
\draw[step=1, gray, ultra thin] (-0.2,-0.2) grid (5.5,5.5) ;
\draw (0,0) node[anchor=center]{{\color{black} \bf\tiny 1}} ;
\draw[line width = 0.3mm]  (1-0.25,1-0.25) -- (1-0.25,1+0.25) -- (1.25,1+0.25) -- (1.25, 1-0.25) -- (1-0.25, 1-0.25) ;
\draw[line width = 0.3mm]  (2-0.25,2-0.25) -- (2-0.25,2+0.25) -- (2+0.25,2+0.25) -- (2+0.25, 2-0.25) -- (2-0.25, 2-0.25) ;

\draw (2,2) node[anchor=center]{{\color{black} \bf\tiny 3}} ;
\draw (1,1) node[anchor=center]{{\color{black} \bf\tiny 2}} ;
\draw (2,2) node[anchor=center]{{\color{black} \bf\tiny 3}} ;
\draw [thick] (1,0) circle (6pt)
 ;
 \draw [thick] (2,0) circle (6pt)
 ;
 \draw [thick] (0,1) circle (6pt)
 ;
  \draw [thick] (0,2) circle (6pt)
 ;
  \draw [thick] (2,1) circle (6pt)
 ;
 \draw [thick] (3,0) circle (6pt)
 ;
 \draw [thick] (3,1) circle (6pt)
 ;
 \draw [thick] (0,3) circle (6pt)
 ;
 \draw [thick] (1,2) circle (6pt)
 ;
  \draw [thick] (0,5) circle (6pt)
 ;
  \draw [thick] (1,4) circle (6pt)
 ;
%\draw[dashed] (2,0) -- (0,2);
\draw [thick] (0,4) circle (6pt) ; 
\draw [thick] (1,3) circle (6pt) ; 
\draw [thick] (4,0) circle (6pt) ;
\draw [thick] (3,2) circle (6pt) ; 
\draw [thick] (4,1) circle (6pt) ;
\draw [thick] (5,0) circle (6pt) ;
%\draw[line width = 0.4mm] (1,1) -- (0,1) ; 
\draw[line width = 0.3mm]  (0.8, 2.2) -- (0,3) ;
\draw[line width = 0.3mm]  (3,0) -- (3,0.8) ;
\draw[-> , thick]  (3.3,0.7) -- (3.7,0.3) ;
\draw[line width = 0.3mm]   (4,0) -- (0,4) ;
%\draw[-> , thick]  (4,0) -- (3.4, 0.6) ;
\draw[->, thick] (1, 3) -- (0.4, 3.6) ;
\draw[->> , thick]  (3,-0.7) -- (2.5, -0.7) ;
\draw[->> , thick]  (3,0.6) -- (3, 0.3) ;
\draw[->> , thick]  (0.3,2.7) -- (0.7, 2.3) ;
\draw[line width = 0.3mm] (2,2) -- (3,2) -- (5,0) ;
\draw[->>>> , thick]  (2,2) -- (2.7, 2) ;
\draw[-> , thick]  (2+0.3,2-0.3) -- (2+0.6, 2-0.6) ;
\draw [->>>, thick] (1.7, 2.3) -- (1.3, 2.7) ;
\draw [->>, thick] (0.7, 3.3) -- (0.25, 3.75) ;
\draw [thick] (2.5,2.5) node [anchor=center] {\bf {\tiny (IV)}};
\draw [thick] (1.2,2.8) node [anchor=west] {\bf {\tiny (III)}};
\draw [thick] (0.5,3.5) node [anchor=west] {\bf {\tiny (III)}};
\draw[->>>>>, thick] (3,2) -- (3.7, 1.3) ;
\draw [thick] (3.5,1.5) node [anchor=west] {\bf {\tiny (V)}};
\draw[->>>>>, thick] (4,1) -- (4.7, 0.3) ;
\draw [thick] (4.5,0.5) node [anchor=west] {\bf {\tiny (V)}};
\draw[->, thick] (4.4,0.6) -- (4.7, 0.3) ;
\draw[line width=0.3mm] (0,3) -- (-0.7,3) -- (-0.7, -0.7) -- (3,-0.7) -- (3,0) ;
\draw [thick] (2.9,1.5) node[anchor=center] {\bf {\tiny (I) }};
\draw [thick] (3.9,0.5) node[anchor=center] {\bf {\tiny (I) }};
\draw [thick] (3,-0.7) node[anchor=west] {\bf {\tiny (II) }};
\draw[line width=0.3mm] (5, -0.2) -- (5,-1) -- (-1,-1) -- (-1, 5) -- (0,5) -- (0.85, 4.15);
\draw[->>>>>>, thick] (5,-0.2) -- (5, -1) ;
\draw[->>>>>>, thick] (0.2,4.8) -- (0.7, 4.3) ;
\draw [thick] (0.5,4.5) node[anchor=west] {\bf {\tiny (VI) }};
\draw [thick] (5,-1) node[anchor=west] {\bf {\tiny (VI) }};
\draw[->>>>>>>, thick] (1.1,3.9) -- (1.7, 3.3) ;
\draw[line width=0.3mm] (1,4) -- (2,3);
\draw [thick] (1.5,3.5) node[anchor=west] {\bf {\tiny (VII) }};
\draw[thick] (2,3) circle (6pt);
 \end{tikzpicture}
\caption*{\text{4b) Cumulative construction after adding }$d_{2,2}$ \text{ when }$n=5$}
 \end{subfigure}
    \end{figure}
\end{center}
In pictures 4a) and 4b), the shaded region contains the full sub-tuple of variables that we computed using free $d_{0,0} \text{ and }d_{1,1}$. 
The further progress we made over  shaded region, after adding (free) $d_{2,2}$ in both the cases $n\geq 6$ and $n =5$, is shown step-wise, by paths (I)--(V) and resp. (I)--(VII), in those figures.
Those directed paths  categorize/isolate used conditions from Figs. (A)--(G) that are similar.

When $n\geq 6$, to continue the construction, we need to choose the next free variable, which we choose to be $d_{3,3}$.
Then, the next set of variables that can be computed after those in picture 4a), with using $d_{3,3}$ and using those in the previous layers, are as follows:
In below pictures 5a) and 5b), the shaded region is the set of all found $d_{k,s}$'s in and up to 3e).
\begin{center}
\begin{figure}[H]
\hspace*{-5cm}  \begin{subfigure}[t]{0.19\textwidth} 
\begin{tikzpicture}[scale=0.7] 
\draw[fill=gray!30] (-0.3, 4.7) -- (-0.3, -0.4) -- (5.4 , -0.4) -- (5.8, -0.4) -- (3, 2.6) -- ( 1.7, 2.6) -- (-0.3, 4.7) ;
\draw[step=1, gray, ultra thin] (-0.2,-0.2) grid (6.5,6.5) ;
\draw[line width = 0.3mm]  (-0.25,-0.25) -- (-0.25,0.25) -- (0.25,0.25) -- (0.25, -0.25) -- (-0.25, -0.25) ;
\draw (0,0) node[anchor=center]{{\color{black} \bf\tiny 1}} ;
\draw[line width = 0.3mm]  (1-0.25,1-0.25) -- (1-0.25,1+0.25) -- (1.25,1+0.25) -- (1.25, 1-0.25) -- (1-0.25, 1-0.25) ;
\draw[line width = 0.3mm]  (2-0.25,2-0.25) -- (2-0.25,2+0.25) -- (2+0.25,2+0.25) -- (2+0.25, 2-0.25) -- (2-0.25, 2-0.25) ;
\draw[line width = 0.3mm]  (3-0.25,3-0.25) -- (3-0.25,3+0.25) -- (3+0.25,3+0.25) -- (3+0.25, 3-0.25) -- (3-0.25, 3-0.25) ;
\draw (2,2) node[anchor=center]{{\color{black} \bf\tiny 3}} ;
\draw (1,1) node[anchor=center]{{\color{black} \bf\tiny 2}} ;
\draw (2,2) node[anchor=center]{{\color{black} \bf\tiny 3}} ;
\draw (3,3) node[anchor=center]{{\color{black} \bf\tiny 4}} ;
%\draw[->, thick] (0,1) -- (0,1.7) ;
%\draw[->, thick] (4.3,0) -- (4.7,0) ;
%\draw[->, thick] (3.3,0) -- (3.7, 0) ;
%\draw[line width = 0.3mm] (0,3.2) -- (0,4) ;
%\draw[line width = 0.3mm] (1,2) -- (1,3) ;
%\draw[line width = 0.3mm] (2,2) -- (1,3) ;
%\draw[line width = 0.3mm] (3,1) -- (2,2) ;
%\draw[line width = 0.3mm] (3,1) -- (0,4) ;
%\draw[line width = 0.3mm] (3,0) -- (4,0) ;
%\draw[line width = 0.3mm] (3,0) -- (3,1) ;
%\draw[line width = 0.3mm] (4,0) -- (3,1) ;
%\draw[line width = 0.3mm] (0,1) -- (0,2) ;
\draw [thick] (1,0) circle (6pt)
 ;
 \draw [thick] (2,0) circle (6pt)
 ;
 \draw [thick] (0,1) circle (6pt)
 ;
  \draw [thick] (0,2) circle (6pt)
 ;
  \draw [thick] (2,1) circle (6pt)
 ;
 \draw [thick] (3,0) circle (6pt)
 ;
 \draw [thick] (3,1) circle (6pt)
 ;
 \draw [thick] (0,3) circle (6pt)
 ;
 \draw [thick] (1,2) circle (6pt)
 ;
 \draw [thick] (0,5) circle (6pt)
 ;
 \draw [thick] (1,4) circle (6pt)
 ;
  \draw [thick] (1,5) circle (6pt)
 ;
  \draw [thick] (5,1) circle (6pt)
 ;
  \draw [thick] (2,4) circle (6pt)
 ;
  \draw [thick] (4,2) circle (6pt)
 ;
  \draw [thick] (0,6) circle (6pt)
 ;
  \draw [thick] (6,0) circle (6pt)
 ;
%\draw[dashed] (2,0) -- (0,2);
\draw [thick] (0,4) circle (6pt) ; 
\draw [thick] (1,3) circle (6pt) ; 
\draw  [thick] (4,0) circle (6pt) ;
\draw [thick] (3,2) circle (6pt) ; 
\draw [thick] (4,1) circle (6pt) ;
\draw[thick] (5,0) circle (6pt) ;
\draw [thick] (2,3) circle (6pt) ;
%\draw[->, thick] (1.3,3) -- (1.7,3) ;
%\draw[->, thick] (1.3,3.7) -- (1.7,3.3) ;
%\draw[line width = 0.4mm] (1,1) -- (0,1) ; 
%\draw (6.5,2) node[anchor=center] {$\cdots$} ; \draw[->, thick] (7.3,2)--(8.5,2) ; \draw (9.5,2) node[anchor=center] {$\cdots$} ;
\draw [line width=0.3mm] (3.3, 2.7) -- (6,0); 
\draw [line width=0.3mm] (2.7, 3.3) -- (0,6); 
\draw[->, thick] (4,2) -- (4.5,1.5) ;
\draw[->, thick] (3,3) -- (3.5,2.5) ;
\draw[->, thick] (5,1) -- (5.7,0.3) ;
\draw[->>>>, thick] (2,4) -- (1.2,4.8) ;
\draw[->>>>, thick] (1,5) -- (0.2,5.8) ;
\draw (4, 2) node[anchor=west] {\bf {\tiny (I)}} ;
\draw (1.5, 4.5) node[anchor=west] {\bf {\tiny (IV)}} ;
\draw[dashed, thick] (4.7, 1.3) -- (5.3, 1.3) -- (5.3, -0.3) -- (4.7, -0.3) -- (4.7, 1.3) ;
\draw[dashed, thick] (-0.4, 5) -- (0, 5.4) -- (1.4, 4) -- (1, 3.6) -- (-0.4, 5) ;
\draw[line width=0.3mm] (0,4.7) -- (-0.7,4.2) -- (-0.7, -0.7) -- (5,-0.7) -- (5,-0.3) ;
\draw[->>, thick] (-0.7,3) -- (-0.7, 3.5) ;
\draw [thick] (5, -0.7) node[anchor=west] {\bf {\tiny (II)}} ;
\draw[->>, thick] (4.5, -0.7) -- (4, -0.7) ;
\draw[->>, thick] (0, 4) -- (0.55, 4.55) ;
\draw [thick] (-0.7, 3) node[anchor=east] {\bf {\tiny (II)}} ;
\draw[->>>, thick] (1, 4) -- (1.7, 3.3) ;
\draw [thick] (1.5, 3.5) node[anchor=west] {\bf {\tiny (III)}} ;
\draw [line width=0.3mm] (1,4) -- (2,3) ; 
 \end{tikzpicture}
\subcaption*{\hspace*{-0.5cm} 5a)~For~$(d_{4,2},d_{5,1}, d_{6,0}),\ (d_{0,5}, d_{1,4})$,  $(d_{2,3}), (d_{2,4}, d_{1,5}, d_{0,6})$ using $d_{3,3}$ if $n=6$}
 \end{subfigure}\hspace*{5cm}
   \begin{subfigure}[t]{0.19\textwidth} 
\begin{tikzpicture}[scale=0.8] 
\draw[fill=gray!30] (-0.3, 4.7) -- (-0.3, -0.4) -- (5.4 , -0.4) -- (5.8, -0.4) -- (3, 2.6) -- ( 1.7, 2.6) -- (-0.3, 4.7) ;
\draw[step=1, gray, ultra thin] (-0.2,-0.2) grid (7.5,6.8) ;
\draw[line width = 0.3mm]  (-0.25,-0.25) -- (-0.25,0.25) -- (0.25,0.25) -- (0.25, -0.25) -- (-0.25, -0.25) ;
\draw (0,0) node[anchor=center]{{\color{black} \bf\tiny 1}} ;
\draw[line width = 0.3mm]  (1-0.25,1-0.25) -- (1-0.25,1+0.25) -- (1.25,1+0.25) -- (1.25, 1-0.25) -- (1-0.25, 1-0.25) ;
\draw[line width = 0.3mm]  (2-0.25,2-0.25) -- (2-0.25,2+0.25) -- (2+0.25,2+0.25) -- (2+0.25, 2-0.25) -- (2-0.25, 2-0.25) ;
\draw[line width = 0.3mm]  (3-0.25,3-0.25) -- (3-0.25,3+0.25) -- (3+0.25,3+0.25) -- (3+0.25, 3-0.25) -- (3-0.25, 3-0.25) ;
\draw (2,2) node[anchor=center]{{\color{black} \bf\tiny 3}} ;
\draw (1,1) node[anchor=center]{{\color{black} \bf\tiny 2}} ;
\draw (2,2) node[anchor=center]{{\color{black} \bf\tiny 3}} ;
\draw (3,3) node[anchor=center]{{\color{black} \bf\tiny 4}} ;
%\draw[->, thick] (0,1) -- (0,1.7) ;
%\draw[->, thick] (4.3,0) -- (4.7,0) ;
%\draw[->, thick] (3.3,0) -- (3.7, 0) ;
%\draw[line width = 0.3mm] (0,3.2) -- (0,4) ;
%\draw[line width = 0.3mm] (1,2) -- (1,3) ;
%\draw[line width = 0.3mm] (2,2) -- (1,3) ;
%\draw[line width = 0.3mm] (3,1) -- (2,2) ;
%\draw[line width = 0.3mm] (3,1) -- (0,4) ;
%\draw[line width = 0.3mm] (3,0) -- (4,0) ;
%\draw[line width = 0.3mm] (3,0) -- (3,1) ;
%\draw[line width = 0.3mm] (4,0) -- (3,1) ;
%\draw[line width = 0.3mm] (0,1) -- (0,2) ;
\draw [thick] (1,0) circle (6pt)
 ;
 \draw [thick] (2,0) circle (6pt)
 ;
 \draw [thick] (0,1) circle (6pt)
 ;
  \draw [thick] (0,2) circle (6pt)
 ;
  \draw [thick] (2,1) circle (6pt)
 ;
 \draw [thick] (3,0) circle (6pt)
 ;
 \draw [thick] (3,1) circle (6pt)
 ;
 \draw [thick] (0,3) circle (6pt)
 ;
 \draw [thick] (1,2) circle (6pt)
 ;
 \draw [thick] (0,5) circle (6pt)
 ;
 \draw [thick] (1,4) circle (6pt)
 ;
  \draw [thick] (1,5) circle (6pt)
 ;
  \draw [thick] (5,1) circle (6pt)
 ;
  \draw [thick] (2,4) circle (6pt)
 ;
  \draw [thick] (4,2) circle (6pt)
 ;
  \draw [thick] (0,6) circle (6pt)
 ;
  \draw [thick] (6,0) circle (6pt)
 ;
  \draw [thick] (4,3) circle (6pt)
 ;
  \draw [thick] (5,2) circle (6pt)
 ;
  \draw [thick] (6,1) circle (6pt)
 ;
  \draw [thick] (7,0) circle (6pt)
 ;
%\draw[dashed] (2,0) -- (0,2);
\draw [thick] (0,4) circle (6pt) ; 
\draw [thick] (1,3) circle (6pt) ; 
\draw  [thick] (4,0) circle (6pt) ;
\draw [thick] (3,2) circle (6pt) ; 
\draw [thick] (4,1) circle (6pt) ;
\draw[thick] (5,0) circle (6pt) ;
\draw [thick] (2,3) circle (6pt) ;
%\draw[->, thick] (1.3,3) -- (1.7,3) ;
%\draw[->, thick] (1.3,3.7) -- (1.7,3.3) ;
%\draw[line width = 0.4mm] (1,1) -- (0,1) ; 
%\draw (6.5,2) node[anchor=center] {$\cdots$} ; \draw[->, thick] (7.3,2)--(8.5,2) ; \draw (9.5,2) node[anchor=center] {$\cdots$} ;
\draw [line width=0.3mm] (3.3, 2.7) -- (6,0); 
\draw [line width=0.3mm] (2.7, 3.3) -- (0,6); 
\draw[->, thick] (4,2) -- (4.5,1.5) ;
\draw[->, thick] (3,3) -- (3.5,2.5) ;
\draw[->, thick] (5,1) -- (5.7,0.3) ;
\draw[->>>>, thick] (2,4) -- (1.2,4.8) ;
\draw[->>>>, thick] (1,5) -- (0.2,5.8) ;
\draw (4, 2) node[anchor=west] {\bf {\tiny (I)}} ;
\draw (1.5, 4.5) node[anchor=west] {\bf {\tiny (IV)}} ;
\draw[dashed, thick] (4.7, 1.3) -- (5.3, 1.3) -- (5.3, -0.3) -- (4.7, -0.3) -- (4.7, 1.3) ;
\draw[dashed, thick] (-0.4, 5) -- (0, 5.4) -- (1.4, 4) -- (1, 3.6) -- (-0.4, 5) ;
\draw[line width=0.3mm] (0,4.7) -- (-0.7,4.2) -- (-0.7, -0.7) -- (5,-0.7) -- (5,-0.3) ;
\draw[->>, thick] (-0.7,3) -- (-0.7, 3.5) ;
\draw [thick] (5, -0.7) node[anchor=west] {\bf {\tiny (II)}} ;
\draw[->>, thick] (4.5, -0.7) -- (4, -0.7) ;
\draw[->>, thick] (0, 4) -- (0.55, 4.55) ;
\draw [thick] (-0.7, 3) node[anchor=east] {\bf {\tiny (II)}} ;
\draw[->>>, thick] (1, 4) -- (1.7, 3.3) ;
\draw [thick] (1.5, 3.5) node[anchor=west] {\bf {\tiny (III)}} ;
\draw [line width=0.3mm] (1,4) -- (2,3) ; 
\draw[->>>>>, thick] (3,3) -- (3.9, 3) ;
\draw (3.5, 3.5) node[anchor=center] {\bf {\tiny (V)}} ;
\draw[->>>>>>, thick] (4,3) -- (4.7,2.3) ;
\draw (5, 2.5) node[anchor=west] {\bf {\tiny (VI)}} ;
\draw[->>>>>>, thick] (4+1,3-1) -- (4.7+1,2.3-1) ;
\draw[->>>>>>, thick] (4+2,3-2) -- (4.7+2,2.3-2) ;
\draw [line width=0.3mm] (4,3) -- (7,0) ;
 \end{tikzpicture}
\subcaption*{\hspace*{-0.7cm} 5b)~For~$(d_{4,2},d_{5,1}, d_{6,0}),\ (d_{0,5}, d_{1,4})$,~$(d_{2,3})$, \\ $(d_{2,4}, d_{1,5}, d_{0,6})$,~$(d_{4,3}, d_{5,2}, d_{6,1}, d_{7,0})$\\ using $d_{3,3}$ if $n\geq 7$}
 \end{subfigure}
 \end{figure}
 \end{center}
These observations will pave way to inductively proceed further for general case of $n$ (which can be even or odd); recall we assumed $n\geq 4$.
Namely, suppose we have determined the values of all the computable $d_{k,s}$ from $(d_{0,0}, \ldots, d_{i,i})$ for some $i< \big\lfloor \frac{n}{2} \big\rfloor-1$.
Let those be denoted by the shaded region in the below polygonal figure, whose shape is motivated by above Figs. 4a)--5b) in cases $n\leq 7$.
\begin{center}
    \begin{figure}[H]
     \begin{subfigure}[t]{0.2\textwidth} 
\begin{tikzpicture}[scale=0.5]
\draw[fill=gray!30] (-0.3, 6+2.7) -- (-0.3, -0.4) -- (6+3.4 , -0.4) -- (6+3.8, -0.4) -- (3+2, 3+1.5) -- (3+0.7, 3+ 1.5) -- (-0.3, 6+2.7) ;
\draw[step=1, gray, ultra thin] (-0.2,-0.2) grid (10,10) ;
%\draw[fill=gray!30] (-0.3, 4+2.7) -- (-0.3, -0.4) -- (4+3.4 , -0.4) -- (4+3.8, -0.4) -- (2+2, 2+1.5) -- (2+0., 2+ 1.5) -- (-0.3, 4+2.7) ;
\draw[line width = 0.3mm]  (-0.25,-0.25) -- (-0.25,0.25) -- (0.25,0.25) -- (0.25, -0.25) -- (-0.25, -0.25) ;
\draw (0,0) node[anchor=center]{{\color{black} \bf\tiny 1}} ;
\draw[line width = 0.3mm]  (1-0.25,1-0.25) -- (1-0.25,1+0.25) -- (1.25,1+0.25) -- (1.25, 1-0.25) -- (1-0.25, 1-0.25) ;
\draw[line width = 0.3mm]  (2-0.25,2-0.25) -- (2-0.25,2+0.25) -- (2+0.25,2+0.25) -- (2+0.25, 2-0.25) -- (2-0.25, 2-0.25) ;
\draw[line width = 0.3mm]  (4-0.25,4-0.25) -- (4-0.25,4+0.25) -- (4+0.25,4+0.25) -- (4+0.25, 4-0.25) -- (4-0.25, 4-0.25) ;
 \draw (3,3.2) node[anchor=center]{{\color{black}{ $\boldsymbol{\iddots}$ }}};
 \draw (4,2.2) node[anchor=west]{{\color{black}{ $\boldsymbol{\iddots}$ }}};
 \draw (0.5,4.5) node[anchor=west]{{\color{black}{ $\boldsymbol{\iddots}$ }}};
\draw (1,1) node[anchor=center]{{\color{black} \bf\tiny 2}} ;
\draw (2,2) node[anchor=center]{{\color{black} \bf\tiny 3}} ;
\draw (4,4) node[anchor=center]{{\color{black} \bf\tiny i}} ;
\draw (5,5.1) node[anchor=center]{{\color{gray} \bf\tiny  i+1}} ;
%\draw[dashed, gray, thick] (4,5) node[anchor=center]{{\color{gray} \bf\tiny }} ;
\draw[line width = 0.3mm, gray]  (5-0.45,5-0.35) -- (5-0.45,5+0.45) -- (5+0.45,5+0.45) -- (5+0.45, 5-0.35) -- (5-0.45, 5-0.3) ;
\draw [thick] (1,0) circle (6pt)
 ;
 \draw [thick] (2,0) circle (6pt)
 ;
 \draw [thick] (0,1) circle (6pt)
 ;
  \draw [thick] (0,2) circle (6pt)
 ;
  \draw [thick] (2,1) circle (6pt)
 ;
 \draw [thick] (3,0) circle (6pt)
 ;
 \draw [thick] (3,1) circle (6pt)
 ;
 \draw [thick] (0,3) circle (6pt)
 ;
 \draw [thick] (1,2) circle (6pt)
 ;
%\draw[dashed] (2,0) -- (0,2);
\draw[thick] (0,4) circle (6pt) ; 
\draw[thick] (1,3) circle (6pt) ; 
\draw[thick]  (4,0) circle (6pt) ;
\draw[thick] (3,2) circle (6pt) ; 
\draw[thick] (4,1) circle (6pt) ;
\draw[thick] (5,0) circle (6pt) ;
\draw[thick] (9,0) circle (6pt) ;
\draw[thick] (8,1) circle (6pt) ;
\draw[thick] (7,2) circle (6pt) ;
\draw[thick] (6,3) circle (6pt) ;
\draw[thick] (5,4) circle (6pt) ;
%\draw[thick] (4,4) circle (6pt) ;
\draw[thick] (3,5) circle (6pt) ;
\draw[thick] (2,6) circle (6pt) ;
\draw[thick] (1,7) circle (6pt) ;
\draw[thick] (0,8) circle (6pt) ;
%\draw[line width = 0.4mm] (1,1) -- (0,1) ; 
%\draw[line width = 0.5mm]  (0.7, 2.3) -- (0,3) ;
%\draw[->, thick]  (0.7, 2.3) -- (0.4,2.6) ;
%\draw[line width = 0.5mm]  (3,0) -- (3,1) ;
%\draw[-> , thick]  (3,0) -- (3,0.6) ;
%\draw[line width = 0.5mm]   (4,0) -- (0,4) ;
%\draw[-> , thick]  (4,0) -- (3.4, 0.6) ;
%\draw[->, thick] (1, 3) -- (0.4, 3.6) ;
%\draw[line width = 0.5mm] (2,2) -- (3,2) -- (5,0) ;
%\draw[-> , thick]  (2,2) -- (2.6, 2) ;
%\draw[->, thick] (3,2) -- (3.4, 1.6) ;
%\draw[->, thick] (4.4,0.6) -- (4.7, 0.3) ;
%\draw[line width=0.5mm] (0,3) -- (-0.7,3) -- (-0.7, -0.7) -- (3,-0.7) -- (3,0) ;
\draw[thick] (6, 0) node[anchor = west]{$\boldsymbol{\cdots}$} ;
\draw[thick] (0,5) node[anchor = south]{$\boldsymbol{\vdots}$} ;
 \end{tikzpicture}
 \end{subfigure}
 \caption*{6a) Indices of all the computable variables using $(d_{0,0}, \ldots, d_{i,i})$  for $i<\big\lfloor \frac{n}{2}\big\rfloor-1$}
    \end{figure}
\end{center}
We add the next free variable at the sketched place $(i+1,i+1)$ in the above figure.
Analogous to in picture 5a) (i.e. using explanations similar to those therein), we can compute the next layers of variables along the paths (I)--(VI); as $i< \big\lfloor \frac{n}{2}\big\rfloor-1$.
This is shown in following picture 6b); and the presently computed ones include $d_{k, \ 2i+1-k}$'s.
This will prove that all the $d_{k,s}$'s in the solution-tuple we started with, can be iteratively computed.
\begin{center}
\begin{figure}[H]
 \hspace*{-10cm} \begin{subfigure}[t]{0.2\textwidth} 
 \begin{tikzpicture}[scale=1]
\draw[fill=gray!30] (-0.3, 6+2.7) -- (-0.3, -0.4) -- (6+3.4 , -0.4) -- (6+3.8, -0.4) -- (3+2, 3+1.5) -- (3+0.7, 3+ 1.5) -- (-0.3, 6+2.7) ;
\draw[step=1, gray, ultra thin] (-0.2,-0.2) grid (11.3,11.3) ;
%\draw[fill=gray!30] (-0.3, 4+2.7) -- (-0.3, -0.4) -- (4+3.4 , -0.4) -- (4+3.8, -0.4) -- (2+2, 2+1.5) -- (2+0., 2+ 1.5) -- (-0.3, 4+2.7) ;
\draw[line width = 0.3mm]  (-0.25,-0.25) -- (-0.25,0.25) -- (0.25,0.25) -- (0.25, -0.25) -- (-0.25, -0.25) ;
\draw (0,0) node[anchor=center]{{\color{black} \bf\tiny 1}} ;
\draw[line width = 0.3mm]  (1-0.25,1-0.25) -- (1-0.25,1+0.25) -- (1.25,1+0.25) -- (1.25, 1-0.25) -- (1-0.25, 1-0.25) ;
\draw[line width = 0.3mm]  (2-0.25,2-0.25) -- (2-0.25,2+0.25) -- (2+0.25,2+0.25) -- (2+0.25, 2-0.25) -- (2-0.25, 2-0.25) ;
\draw[line width = 0.3mm]  (4+1-0.4,4+1-0.35) -- (4+1-0.4,4+1+0.35) -- (4+1+0.3,4+1+0.35) -- (4+1+0.3, 4+1-0.35) -- (4+1-0.4, 4+1-0.35) ;
\draw[line width = 0.3mm]  (4-0.3,5-1-0.3) -- (4-0.3,5-1+0.3) -- (4+0.3,5-1+0.3) -- (4+0.3, 5-1-0.3) -- (4-0.3, 5-1-0.3) ;
 \draw (3,3.2) node[anchor=center]{{\color{black}{ $\boldsymbol{\iddots}$ }}};
 \draw (4,2.2) node[anchor=west]{{\color{black}{ $\boldsymbol{\iddots}$ }}};
 \draw (0.5,4.5) node[anchor=west]{{\color{black}{ $\boldsymbol{\iddots}$ }}};
 \draw (7,2) node[anchor= center]{{\color{black}{ $\boldsymbol{\ddots}$ }}};
 \draw (2,6) node[anchor= center]{{\color{black}{ $\boldsymbol{\ddots}$ }}};
%\draw (1,2) node[anchor=center]{{\color{black} \bf\tiny 3}} ;
\draw (1,1) node[anchor=center]{{\color{black} \bf\tiny 2}} ;
\draw (2,2) node[anchor=center]{{\color{black} \bf\tiny 3}} ;
\draw (4,4) node[anchor=center]{{\color{black} \bf i}} ;
\draw (5,4.8) node[anchor=center]{{\color{black} \bf i+1}} ;
\draw [thick] (1,0) circle (6pt)
 ;
 \draw [thick] (2,0) circle (6pt)
 ;
 \draw [thick] (0,1) circle (6pt)
 ;
  \draw [thick] (0,2) circle (6pt)
 ;
  \draw [thick] (2,1) circle (6pt)
 ;
 \draw [thick] (3,0) circle (6pt)
 ;
 \draw [thick] (3,1) circle (6pt)
 ;
 \draw [thick] (0,3) circle (6pt)
 ;
 \draw [thick] (1,2) circle (6pt)
 ;
%\draw[dashed] (2,0) -- (0,2);
\draw[thick] (0,4) circle (6pt) ; 
\draw[thick] (1,3) circle (6pt) ; 
\draw[thick]  (4,0) circle (6pt) ;
\draw[thick] (3,2) circle (6pt) ; 
\draw[thick] (4,1) circle (6pt) ;
\draw[thick] (5,0) circle (6pt) ;
\draw[thick] (9,0) circle (6pt) ;
\draw[thick] (8,1) circle (6pt) ;
%\draw[thick] (7,2) circle (6pt) ;
\draw[thick] (6,3) circle (6pt) ;
\draw[thick] (5,4) circle (6pt) ;
\draw[thick] (3,4) circle (6pt) ;
\draw[thick] (3,5) circle (6pt) ;
%\draw[thick] (2,6) circle (6pt) ;
\draw[thick] (1,7) circle (6pt) ;
\draw[thick] (0,8) circle (6pt) ;
\draw[thick] (9,1) circle (6pt) ;
\draw[thick] (0,9) circle (6pt) ;
\draw[thick] (1,8) circle (6pt) ;
\draw[thick] (2,7) circle (6pt) ;
\draw[thick] (4,5) circle (6pt) ;
\draw[thick] (6,5) circle (6pt) ;
\draw[thick] (10,0) circle (6pt) ;
\draw[thick] (11,0) circle (6pt) ;
\draw[thick] (10,1) circle (6pt) ;
\draw[thick] (9,1) circle (6pt) ;
\draw[thick] (0,10) circle (6pt) ;
\draw[thick] (1,9) circle (6pt) ;
%\draw[line width = 0.4mm] (1,1) -- (0,1) ; 
\draw[line width = 0.3mm]  (1,8) -- (0,9) -- (-0.7, 9) -- (-0.7, -0.7) -- ( 9, -0.7) -- (9, 0.8) ;
\draw[line width=0.3mm] (1.1, 7.9) -- (4,5) ;
\draw[line width = 0.3mm] (10, 0) -- (5.6, 4.4) ;
\draw[line width = 0.3mm] (0,10) -- (4.6, 5.4) ;
\draw[line width = 0.3mm] (5,5) -- (6,5) ;
\draw[line width = 0.3mm] (6.2,4.8) -- (11, 0) ;
%\draw[->, thick]  (0.7, 2.3) -- (0.4,2.6) ;
%\draw[line width = 0.5mm]  (3,0) -- (3,1) ;
\draw[-> , thick]  (5,5) -- (5.7,4.3) ;
\draw[->> , thick]  (-0.7,4) -- (-0.7,4.4) ;
\draw[-> , thick]  (6,4) -- (6.7,3.3) ;
\draw[->, thick] (7,3) -- (7.7,2.3);
\draw[->, thick] (8,2) -- (8.7,1.3) ;
\draw[->, thick] (9,1) -- ( 9.7, 0.3) ; 
\draw[->>, thick] ( 5.4,  -0.7) -- (5, -0.7) ;
\draw[->>, thick] ( 9,  0.8) -- (9, 0.4) ;
%\draw[->, thick] (10, 0) -- ( 9.4,  0.6) ;
\draw[->> , thick]  (0,9) -- (0.5,8.5) ;
%\draw[-> , thick]  (1,9) -- (0.4,9.6) ;
%\draw[->, thick] (5,5) -- (5.6,5) ;
\draw[->>>>>>, thick] (6.2,4.8) -- (6.6,4.4) ;
%\draw[->, thick] (10, 1) -- (10.6,0.4 ) ;
\draw[->>>, thick]  (3,6) -- (3.5,5.5) ;
\draw[->>>, thick]  (1.2,7.8) -- (1.7, 7.3) ;
\draw[->>>>>, thick]  (5.2,5) -- (5.8, 5) ;
\draw[->>>>>>, thick]  (9, 2) -- (9.5, 1.5) ;
\draw[->>>>, thick]   (3.5,6.5) -- (3,7) ;
\draw[->>>>, thick]  (1.7, 8.3) -- (1.2, 8.8) ;
%\draw[line width = 0.5mm]   (4,0) -- (0,4) ;
%\draw[-> , thick]  (4,0) -- (3.4, 0.6) ;
%\draw[->, thick] (1, 3) -- (0.4, 3.6) ;
%\draw[line width = 0.5mm] (2,2) -- (3,2) -- (5,0) ;
%\draw[-> , thick]  (2,2) -- (2.6, 2) ;
%\draw[->, thick] (3,2) -- (3.4, 1.6) ;
%\draw[->, thick] (4.4,0.6) -- (4.7, 0.3) ;
%\draw[line width=0.5mm] (0,3) -- (-0.7,3) -- (-0.7, -0.7) -- (3,-0.7) -- (3,0) ;
\draw[thick] (6, 0) node[anchor = west]{$\boldsymbol{\cdots}$} ;
\draw[thick] (0,5) node[anchor = south]{$\boldsymbol{\vdots}$} ;
\draw[thick] (2.65,6.2) node[anchor = west]{\tiny \bf (III)} ;
\draw[thick] (-0.4, 9) node[anchor = south]{\tiny \bf (II)} ;
\draw[thick] (-0.7,4) node[anchor = east]{\tiny \bf (II)} ;
\draw[thick] (5 , -1) node[anchor = west]{\tiny \bf (II)} ;
%\draw[thick] (7.8,0.4) node[anchor = west]{\tiny \bf (I)} ;
\draw[thick] (9,-0.7) node[anchor = west]{\tiny \bf (II)} ;
\draw[thick] (8.2,2.3) node[anchor = north]{\tiny \bf (I)} ;
\draw[thick] (4,6) node[anchor = west]{\tiny \bf (IV)} ;
%\draw[thick] (0,10.8) node[anchor = north]{\tiny \bf (II)} ;
%\draw[thick] (7.2,4.1) node[anchor = east]{\tiny \bf (II)} ;
\draw[thick] (5.7,5.1) node[anchor = south]{\tiny \bf (V)} ;
\draw[thick] (7.4,3.7) node[anchor = south]{\tiny \bf (VI)} ;
%\draw[thick] (10.8,0) node[anchor = west]{\tiny \bf (III)} ;
\draw[dashed, thick] (4+4.7, 1.3) -- (4+5.3, 1.3) -- (4+5.3, -0.3) -- (4+4.7, -0.3) -- (4+4.7, 1.3) ;
\draw[dashed, thick] (-0.4, 4+5) -- (0, 4+5.4) -- (1.4, 4+4) -- (1, 4+3.6) -- (-0.4, 4+5) ;
 \end{tikzpicture}
 \end{subfigure}
 \caption*{6b) For $(d_{i+2,i}, \ldots, d_{2i+2, 0}),\ (d_{0, 2i+1}, d_{1, 2i}),\ (d_{2, 2i-1}, \ldots, d_{i, i+1}), \  (d_{i,i+2}, \ldots, d_{0, 2i+2}) , \  (d_{i+2, i+1})$, \\ \hspace*{0.5cm}
 $(d_{i+3, i}, \ldots, d_{2i+3, 0})$ using $d_{i+1, i+1}$ for $i+1< \big\lfloor \frac{n}{2} \big\rfloor$}
    \end{figure}
\end{center}
When $i+1< \big\lfloor \frac{n}{2} \big\rfloor $, the points that lie on 6 paths (I)--(VI) in Fig 6b),  are all the computable points $d_{k,s}$'s after adding $d_{i+1, i+1}$.
And in this case, when we draw a gray region that encloses all the points in Fig 6b), then its shape would resemble the gray region we started with in 6a).
This justifies the choice of the shape of the gray region in Fig. 6a).

Finally suppose $i+1= \big\lfloor \frac{n}{2}\big\rfloor$.
We branch-out into two subcases,  correspond to $n$ being even or odd.
When $n$ is even, $i+1=\frac{n}{2}$, in which case, we do not have to compute the points on line segments (V)--(VI), as their indices go out of the those of $d_{k,s}$ in desired tuple.
Moreover, when $i+1=\frac{n}{2}$, observe in Fig. 6b), that points on line segments (I)--(IV) together with those inside the shaded region constitute all variables in the desire tuple $(d_{k,s})$.
Finally we assume $n$ to be odd and $i+1=\frac{n-1}{2}$.
Following the progress in Fig. 6b), observe that we are only left to compute the variables $(d_{i+1, i+2}, d_{i, i+3}, \ldots, d_{0,n}$; whose indices fall along the line segment that lies above (IV) in Fig. 6b).
And this is done similar to the procedure in 3f), or to the procedure along path (VI) in 4b).
Namely, we first apply \eqref{Eqn length 3 at n} at $n$ together with \eqref{Eqn s- (k-1) leq -2} at $(k-1,s)=(0, n-1)$, to determine $(d_{0,n}, d_{1, n-1})$ using the system
\[
 - \ a \ d_{1,n-1}\ + \ (na+b) \ d_{0,n}\ \ = \ \ - B(n)\ d_{n,0}  \qquad \text{and}  \qquad  A(1)\ d_{1,n-1}\ +\ A(n)\ d_{0,n}\ \ =\ \  - C(n-1) \ d_{0,n-1}.
\]
Then, starting from $d_{1, n-1}$ and using equations \eqref{Eqn s- (k-1) leq -2} with their centers $(k-1,s)$ successively descending along line segment (IV) in Fig. 6b), we can iteratively compute $d_{2, n-2}, \ldots, d_{i, i+3}$.
This completes the proof of the theorem \big(in view of Note \ref{Note redundancies})\big).
\end{proof}

\section{Proof of Theorem \ref{Theorem maximal vectors}(A2): Maximal vectors with weight $(3,3)$} \label{Section proof of (A2)}
\begin{proof}[{{\it \bf Proof of Theorem \ref{Theorem maximal vectors}(A2)}}]
The proof of this part proceeds similar that of the previous part.
Our computations involve only the following ordered (increasing from left to right and top to bottom) right-normed Lie words from a suitable PBW root-basis of $\mathfrak{n}^-$.
\begin{align}
\begin{aligned}[t]  \big[[f_2, f_1],\ f_2 \big]; \ \ \Big[ \big[ [f_2, f_1],\ f_1\big],\ f_1\Big] ; \ \ \big[ [f_2, f_1],\  f_1\big] ;  \ \ [f_2, f_1] ; \ \ f_2 ; \ \ f_1 &\\
\Big[\big[ [f_2, f_1],\ f_1\big],\ [f_2, f_1] \Big]; \ \ \Big[\Big[ [f_2, f_1],\ f_1\big],\ f_1\Big],\  f_2 \Big]; \ \ \Big[\big[[f_2, f_1] ,\ f_1\big],\ f_2\Big];  \qquad  &\\
 \Big[ \big[ [f_2, f_1], \ f_2\big] ,\ [f_2,\ f_1] \Big];  \ \  \Big[ \Big[ \big[ [f_2, f_1], \ f_1\big] ,\ f_2 \Big] ,\ f_2 \Big];  \ \  \Big[ \big[ [f_2, f_1],\ f_2\big],\ f_2\Big]  ; \qquad \qquad & \\
  \Big[\big[ [f_2, f_1], \ f_2\big] ,\ \big[[f_2,\ f_1], f_1\big] \Big];  \ \  \bigg[ \Big[ \Big[ \big[ [f_2, f_1], \ f_1\big],\ f_1 \Big] ,\ f_2 \Big] ,\ f_2 \bigg]; \qquad \qquad \qquad  & \\
\quad \bigg[ \Big[\big[ [f_2, f_1], \ f_1\big] ,\ f_2 \Big], \ [f_2, \ f_1 ] \bigg]; \qquad \qquad \qquad \qquad  &
\end{aligned}
\end{align}
Their ordering is based on the construction of {\it standard monomials} by \cite{Hall} (see also \cite{3-words}), and their weights are ``at most $(3,3)$'' when we ignore the negativity of the roots involved.
In view of their lengths, we use simplified notation by noting only the sequence of their indices as follows:
\begin{align}\label{Liw word root basis symbol list}
\begin{aligned}[t]  21\ \Big|\ 2\ ; \qquad  2111\  ;\qquad  211 \ ;  \qquad 21  \ ;\qquad 2\  ;\qquad 1 &\\
211\ \Big|\ 21\ ; \qquad 2111\ \Big|\ 2\ ; \qquad 211\ \Big|\ 2\ ;  \qquad  &\\
21\ \Big|\ 2 \ \Big|\ 21 \ ; \qquad 211\ \Big|\ 2\ \Big|\ 2\ ; \qquad 21\ \Big|\ 2 \ \Big|\ 2 \ ; \qquad \qquad & \\
21\ \Big|\ 2 \ \Big|\ 211\ ; \qquad 2111\ \Big|\ 2\ \Big|\ 2\ ; \qquad \qquad\qquad  & \\
  211\ \Big|\ 2\ \Big|\ 21\ ; \qquad \qquad \qquad\qquad    &
\end{aligned}
\end{align}
We now list the PBW monomials on the above root-basis elements that yield weight vectors in $M(\lambda)_{\lambda- 3\alpha_1-3\alpha_2}$, $M(\lambda)_{\lambda-2\alpha_1-3\alpha_2}\text{ and } M(\lambda)_{\lambda- 3\alpha_1-2\alpha_2}$.
In this, the products of above Lie words in $U(\mathfrak{n}^-)$ are compactified to products of the corresponding index-sequences as follows.
For example, \[
\big[[f_2, f_1],\ f_2\big] \times [f_2, f_1]\times f_1 \ \   \rightsquigarrow\ \  21\ \Big|\ 2\ \ \star\ \ 21 \ \ \star\ \ 1\ ,\]
and the associated breakage under the (generalized) Kostant partition function of the total weight of that monomial is $\alpha_1+2\alpha_2,\ \alpha_1+\alpha_2,\ \alpha_1$.
We store this data as well in the following list of all PBW monomials that we will deal-with in the proof.
\bigskip\\
{\bf Monomials with total weight (3,3)}\bigskip
\begin{enumerate}[1)]
\item $2111\  \Big | \ 2 \  \Big |\ 2$
\item $21\  \Big | \ 2 \  \Big |\ 211$
\item $211\  \Big | \ 2 \  \Big |\ 21$ \hspace*{3cm} $3\alpha_1+3\alpha_2$
\bigskip
\item $211\  \Big | \ 2 \  \Big |\ 2 \ \ \star\ \ 1$
\item $21\  \Big | \ 2 \  \Big |\ 21 \ \ \star\ \ 1$ \hspace*{3cm} $2\alpha_1+3\alpha_2, \ \alpha_1$\bigskip
\item $21\  \Big | \ 2 \  \Big |\ 2\ \ \star \ \ 1\ \ \star\ \ 1$\hspace*{3cm} $\alpha_1+3\alpha_2,\ \alpha_1, \ \alpha_1$\bigskip
\item $2111\ \Big | \ 2\ \ \star\ \ 2$ 
 \item $211\ \Big | \ 21\ \ \star\ \ 2$ \hspace*{3cm} $3\alpha_1+2\alpha_2,\ \alpha_2$
 \bigskip
 \item $2111\ \ \star\ \ 2\ \ \star\ \ 2 $ \hspace*{3cm} $3\alpha_1+\alpha_2,\ \alpha_2,\ \alpha_2$\bigskip
 \item $211\ \Big|\ 2\ \ \star\ \ 21$ \hspace*{3cm} $2\alpha_1+2\alpha_2,\ \alpha_1+\alpha_2$\bigskip
 \item $211\ \Big|\ 2\ \ \star\ \ 2\  \ \star\ \ 1$\hspace*{3cm} $2\alpha_1+2\alpha_2,\ \alpha_2,\ \alpha_1$\bigskip
 \item $21\ \Big|\ 2\ \ \star\ \ 211$ \hspace*{3cm} $\alpha_1+2\alpha_2,\ 2\alpha_1+\alpha_2$\bigskip
 \item $21\ \Big|\ 2\ \ \star\ \ 21\ \ \star\ \ 1 $ \hspace*{3cm} $\alpha_1+2\alpha_2,\ \alpha_1+\alpha_2,\ \alpha_1$\bigskip
 \item $21\ \Big|\ 2\ \ \star\ \ 2\ \ \star\ \ 1\ \ \star\ \ 1$ \hspace*{3cm} $\alpha_1+2\alpha_2,\ \alpha_2,\ \alpha_1,\ \alpha_1$\bigskip
 \item $211\ \ \star\ \ 21\ \ \star\ \ 2$ \hspace*{3cm} $2\alpha_1+\alpha_2,\ \alpha_1+\alpha_2,\ \alpha_2$\bigskip
 \item $211\ \ \star\ \ 2\ \ \star\ \ 2 \ \ \star\ \ 1$ \hspace*{3cm}\ $2\alpha_1+\alpha_2,\ \alpha_2,\ \alpha_2,\ \alpha_1$\bigskip
 \item $21\ \ \star \ \ 21 \ \ \star\ \ 21$ \hspace*{3cm} $\alpha_1+\alpha_2,\ \alpha_1+\alpha_2,\ \alpha_1+\alpha_2$\bigskip
 \item $21 \ \ \star\ \ 21\ \ \star\ \ 2\ \ \star\ \ 1$ \hspace*{3cm} $\alpha_1+\alpha_2,\ \alpha_1+\alpha_2,\ \alpha_2,\ \alpha_1$\bigskip
 \item $21 \ \ \star\ \ 2 \ \ \star\ \ 2\ \ \star\ \ 1 \star\ \ 1$ \hspace*{3cm} $\alpha_1+\alpha_2,\ \alpha_2,\ \alpha_2,\ \alpha_1,\ \alpha_1$\bigskip
 \item $2 \ \ \star\ \ 2 \ \ \star \ \ 2 \ \ \star\ \ 1\ \ \star\ \ 1 \star \ \ 1$ \hspace*{3cm} $\alpha_2,\ \alpha_2,\ \alpha_2,\ \alpha_1,\ \alpha_1,\ \alpha_1$
\end{enumerate}
\bigskip
{\bf Monomials with total weight (2,3)}\bigskip
\begin{enumerate}[1)]
    \item $211\ \Big| \ 2\ \Big| \ 2$
    \item $21 \  \Big| \ 2\ \Big| \ 21 $ \hspace*{3cm} $2\alpha_1+3\alpha_2$ \bigskip
\item $21\ \Big| \ 2\ \Big|\ 2\  \ \star\ \ 1$ \hspace*{3cm} $\alpha_1+3\alpha_2,\ \alpha_1$ \bigskip
\item $211\ \Big|\ 2\  \ \star\ \ 2$ \hspace*{3cm} $2\alpha_1+2\alpha_2,\ \alpha_2$\bigskip
\item $21\ \Big|\ 2\ \ \star \ \ 21$ \hspace*{3cm} $\alpha_1+2\alpha_2,\ \alpha_1+\alpha_2$ \bigskip
\item $21\ \Big|\ 2\ \ \star \ \ 2 \ \ \star\ \ 1$ \hspace*{3cm} $\alpha_1+2\alpha_2,\ \alpha_2,\ \alpha_1$ \bigskip
\item $211\ \ \star\ \ 2\ \ \star\ \ 2$ \hspace*{3cm} $2\alpha_1+\alpha_2,\ \alpha_2,\ \alpha_2$ \bigskip
\item $21 \ \ \star\ \ 21\ \ \star\ \ 2$ \hspace*{3cm} $\alpha_1+\alpha_2,\ \alpha_1+\alpha_2,\ \alpha_2$ \bigskip
\item $21\ \ \star\ \ 2 \ \ \star\ \ 2\ \ \star\ \ 1$ \hspace*{3cm} $\alpha_1+\alpha_2,\ \alpha_2,\ \alpha_2,\ \alpha_1$ \bigskip
\item $2\ \ \star \ \ 2\ \  \star\ \ 2 \ \  \star\ \ 1\ \ \star\ \ 1$ \hspace*{3cm} $\alpha_2,\ \alpha_2,\ \alpha_2,\ \alpha_1,\ \alpha_1$
\end{enumerate}\bigskip
\bigskip
{\bf Monomials with total weight (3,2)}\bigskip
\begin{enumerate}[1)]
    \item $2111\ \Big| \ 2$ 
    \item $211\ \Big|\ 21$ \hspace*{3cm} $3\alpha_1+2\alpha_2$ \bigskip
    \item $211\ \Big| \ 2 \  \ \star\ \ 1$ \hspace*{3cm} $2\alpha_1+2\alpha_2,\ \alpha_1$\bigskip
    \item $2111\ \ \star\ \ 2$ \hspace*{3cm} $3\alpha_1+\alpha_2,\ \alpha_2$\bigskip
    \item $211\  \star\ \ 21$ \hspace*{3cm} $2\alpha_1+\alpha_2,\ \alpha_1+\alpha_2$\bigskip
    \item $211 \ \ \star\ \ 2\ \ \star\ \ 1$ \hspace*{3cm} $2\alpha_1+\alpha_2,\ \alpha_2,\ \alpha_1$\bigskip
\item $21\ \Big|\ 2\ \ \star\ \ 1\ \ \star\ \ 1$ \hspace*{3cm} $\alpha_1+2\alpha_2,\ \alpha_1,\ \alpha_1$ \bigskip
\item $21\ \ \star\ \ 21\ \ \star\ \ 1$ \hspace*{3cm} $\alpha_1+\alpha_2,\ \alpha_1+\alpha_2,\ \alpha_1$ \bigskip
\item $21\ \ \star\ \ 2\ \ \star\ \ 1\ \ \star 1 $ \hspace*{3cm} $\alpha_1+\alpha_2,\ \alpha_2,\ \alpha_1,\ \alpha_1$\bigskip
\item $2\ \ \star\ \ 2\ \ \star\ \ 1\ \ \star\ \ 1\ \ \star\ \ 1$ \hspace*{3cm} $\alpha_2,\ \alpha_2,\ \alpha_1,\ \alpha_1,\ \alpha_1$.
\end{enumerate}\bigskip
We now carefully write the actions of $e_1$ and $e_2$ on the ``images of above $(3,3)$-weighted'' monomials in $M(\lambda)_{\lambda-3\alpha_1-3\alpha_2} $.
For example,
\begin{center}
$e_1$ sends $\bigg[\Big[ \Big[ \big[[f_2, f_1],\ f_1\big], \ f_1 \Big],\ f_2\Big], \ f_2\bigg]\ m_{\lambda}$ to $(-3a-3b)\bigg[\Big[ \big[[f_2, f_1],\ f_1\big] ,\ f_2\Big], \ f_2\bigg]\ m_{\lambda}$
\end{center}
(which can be verified by derivation action of $e_1$), and it is re-casted in terms of words on indices in point 1) in $e_1$-action list below. 
And also for simplicity, we omitted $m_{\lambda}$ at the right-end of each monomial in the (20+20) raising computations below, and we used the identification between PBW monomials in $U(\mathfrak{n}^-)$ and their images in $M(\lambda)$. 
For example, point 4) below denotes
\begin{center}
$\bigg[\Big[ \big[ [f_2, f_1],\ f_1 \big],\  f_2 \Big],\ f_2\bigg] \times f_1\cdot m_{\lambda}\ \  \overset{e_1}{\longrightarrow}$\\ 
$(-2a-b) \ \Big[\big[ [f_2, f_1],\  f_2 \big],\ f_2\Big] \times f_1 \cdot m_{\lambda}\ +\ \frac{-b}{2}(M_1-1) \ \bigg[\Big[ \big[ [f_2, f_1],\ f_1 \big],\  f_2 \Big],\ f_2\bigg]\cdot m_{\lambda}$.
\end{center}
\bigskip
$\boldsymbol{e_1-}$\textbf{action} \bigskip
\begin{enumerate}[1)]
\item $2111\ \Big|\ 2\ \Big|\ 2 \quad  \longrightarrow \quad (-3a-3b) \ \ 211\ \Big|\ 2\ \Big|\ 2$\bigskip
\item $21\ \Big|\ 2\ \Big|\ 211\quad \longrightarrow \quad (-2a-b)\ \ 21\ \Big|\ 2\ \Big|\ 21$ \hfill \big(as $21\ \Big|\ 21\ \Big|\ 2\ =0 $\big)\bigskip
\item $211\ \Big|\ 2\ \Big|\ 21 \quad \longrightarrow\quad (-2a-b)\ \ 21\ \Big|\ 2\ \Big|\ 21\quad + (-a)\ \ 211\ \Big|\ 2\ \Big|\ 2 $\bigskip
\item $211\ \Big|\ 2\ \Big|\ 2\ \ \star\ \ 1\quad \longrightarrow \quad  (-2a-b)\ \ 21 \ \Big|\ 2\ \Big|\ 2\ \ \star\ \ 1 \quad + \frac{-b}{2}(M_1-1)\ \ 211\ \Big|\ 2\ \Big|\ 2$ \bigskip
\item $21\ \Big|\ 2\ \Big|\ 21\ \ \star\ \ 1 \quad \longrightarrow \quad (-a) \ \ 21\ \Big|\ 2\ \Big|\ 2\ \ \star\ \ 1 \quad +\ \frac{-b}{2}(M_1-1)\ \ 21\ \Big|\ 2\ \Big|\ 21$\bigskip
\item $21 \ \Big|\ 2\ \Big|\ 2\  \ \star\ \ 1 \ \ \star\ \ 1\quad \longrightarrow \quad  \big[-b(M_1-1)+b\big]\ 21\ \Big|\ 2\ \Big|\ 2\ \ \star\ \ 1 $\bigskip
\item $211 1\ \Big|\ 2\ \ \star\ \ 2\quad \longrightarrow 
\quad (-3a-3b)\ \ 211\ \Big|\ 2 \ \ \star\ \ 2 $\bigskip
\item $211\ \Big|\ 21\ \ \star\ \ 2 \quad \longrightarrow \quad (-a)\ \ 211\ \Big|\ 2\ \ \star\ \ 2 $\bigskip
\item $2111\ \ \star\ \ 2\ \ \star\ \ 2 \quad \longrightarrow \quad (-3a-3b)\ \ 211\ \ \star\ \ 2\ \ \star\ \ 2$\bigskip
\item $211\ \Big|\ 2\ \ \star\ \ 21\quad \longrightarrow \quad (-2a-b)\ \ 21\ \Big|\ 2\ \ \star\ \ 21\quad +(-a)\ \ 211\ \Big|\ 2\ \ \star\ \ 2$ \bigskip
\item $211\ \Big|\ 2\ \ \star\  \ 2\ \ \star\ \ 1  \quad \longrightarrow \quad (-2a-b)\ \ 21\ \Big|\ 2\ \ \star\ \ 2\ \ \star\ \ 1\quad + \frac{-b}{2}(M_1-1)\ \ 211\ \Big|\ 2 \ \ \star\ \ 2$\bigskip
\item $21\ \Big|\ 2 \ \ \star\ \ 211\quad \longrightarrow\quad (-2a-b)\ \ 21\ \Big|\ 2\ \ \star\ \ 21$\bigskip
\item $21\ \Big|\ 2\ \ \star\ \ 21\ \ \star\ \ 1 \quad  \longrightarrow\quad (-a) \ \  21\ \Big|\ 2\ \ \star\ \ 2\ \ \star\ \ 1\quad + \frac{-b}{2}(M_1-1)\ \ 21\ \Big|\ 2\ \ \star\ \ 21$ \bigskip
\item $21\ \Big|\ 2 \ \ \star\ \ 2 \ \ \star\ \ 1\ \ \star\ \ 1\quad \longrightarrow \quad \big[-b(M_1-1)+b\big]\ 21\ \Big|\ 2\ \ \star\ \ 2\ \ \star\ \ 1$ \bigskip
\item $211\ \ \star\ \ 21\ \ \star\ \ 2\quad \longrightarrow \quad (-2a-b)\ \ 21\ \ \star\ \ 21\ \ \star\ \ 2 \quad + (-a)\ \ 211\ \ \star\ \ 2\ \ \star\ \ 2$\bigskip
\item $211\ \ \star\ \ 2\ \ \star\ \ 2\ \ \star\ \ 1\quad \longrightarrow\quad (-2a-b)\ \ 21\ \ \star\ \ 2\ \ \star\ \ 2\ \ \star \ \ 1\quad + \frac{-b}{2}(M_1-1)\ \ 211\ \ \star\ \ 2\ \ \star\ \ 2$ \bigskip
\item $21\ \ \star\ \ 21\ \ \star\ \ 21 \quad \longrightarrow\quad (-a)\ \big[\  2 \ \ \star\ \ 21\ \ \star\ \ 21\quad + \ \ 21\ \ \star\ \ 2\ \ \star\ \ 21\quad +\ \ 21\ \ \star\ \ 21\ \ \star\ \ 2\ \big] $\\
\hspace*{3.5cm} $=\ (-a)\ \big[\  2\ \Big| \  21\ \ \star\ \ 21\quad + \ \ (2) \big(21\ \ \star\ \ 2\ \Big| \ 21\big)\quad +\ \ (3) \big(21\ \ \star\ \ 21\ \ \star\ \ 2\big)\ \big] $\\
\hspace*{3.5cm} $=\ (-a)\ \big[\ (3) \big(2\ \Big| \  21\ \ \star\ \ 21\big)\quad + \ \ (2) \Big(21\ \Big| \ \Big(2\ \Big| \ 21\Big)\Big)\quad +\ \ (3) \big(21\ \ \star\ \ 21\ \ \star\ \ 2\big)\ \big] $\\
\hspace*{3.5cm} $=\ (3a)\ 21\ \Big| \  2\ \ \star\ \ 21\quad + (-3a) \big(21\ \ \star\ \ 21\ \ \star\ \ 2\big)\quad + \ \ (-2a)\  21\ \Big| \ 2\ \Big| \ 21 $\bigskip
\item $21 \ \ \star\ \ 21\ \ \star\ \ 2\ \ \star\ \ 1 \quad \longrightarrow \quad (-a)\big[\  2 \ \ \star\ \ 21\ \ \star\ \ 2\ \ \star\ \ 1\quad + \quad 21\ \ \star\ \ 2\ \ \star\ \ 2\ \ \star\ \ 1  \ \big]\quad +$\\
\hspace*{5.3cm} $\frac{-b}{2}(M_1-1)\ \ 21\ \ \star\ \ 21\ \ \star\ \ 2$\\
\hspace*{3cm} $= (-2a)\ 21\ \ \star\ \ 2\ \ \star\ \ 2\ \ \star\ \ 1\quad +\  \frac{-b}{2}(M_1-1)\ 21\ \ \star\ \ 21\ \ \star\ \ 2\quad  + \ (a)\ 21\ \Big|\ 2\ \ \star\ \ 2 \ \ \star\ \ 1$\bigskip
\item $21\ \ \star\ \ 2\ \ \star\ \ 2\ \ \star\ \ 1\ \ \star\ \ 1 \quad \longrightarrow \quad (-a)\ \ 2\ \ \star\ \ 2\ \ \star\ \ 2\  \ \star\ \ 1\ \ \star\ \ 1$\\
\hspace*{6.2cm} $+ \ \big[ -b(M_1-1)+b\big]\ 21\ \ \star\ \ 2\ \ \star\ \ 2\ \ \star\ \ 1$\bigskip
\item $2\ \ \star\ \ 2\ \ \star\ \ 2\ \ \star\ \ 1\ \ \star\ \ 1\ \ \star 1 \quad \longrightarrow \quad \big[\frac{-3}{2}(b)(M_1-1) + 3b\big]\ 2\ \ \star\ \ 2\ \ \star\ \ 2\  \ \star\ \ 1\ \ \star\ \ 1$
\end{enumerate}
 \bigskip
$\boldsymbol{e_2-}$\textbf{action} \bigskip
\begin{enumerate}[1)]
\item $2111\ \Big|\ 2\ \Big|\ 2 \quad  \longrightarrow \quad  \big[(-2)(d+3c)-d\big]\ 2111\ \Big|\ 2 $\bigskip
\item $21\ \Big|\ 2\ \Big|\ 211\quad \longrightarrow \quad (-c-d)\ \ 21\ \Big|\ 211 \quad +\ (c)\ \ 12\ \Big|\ 211 \quad = \quad (2c+d)\ \ 211\ \Big|\ 21 $\bigskip
\item $211\ \Big|\ 2\ \Big|\ 21 \quad \longrightarrow\quad (-2c-d)\ \ 211\ \Big|\ 21\quad + (c)\ \ 211\ \Big|\ 2\ \Big|\ 1 $\\ 
\hspace*{3cm} $= (-c-d)\ \ 211\ \Big|\ 21\quad +\  (c)\ \ 2111\ \Big|\ 2 $\bigskip
\item $211\ \Big|\ 2\ \Big|\ 2\ \ \star\ \ 1\quad \longrightarrow \quad  \big[ -2(d+2c)-d \big]\ \  211\ \Big|\ 2\ \ \star\ \ 1$ \bigskip
\item $21\ \Big|\ 2\ \Big|\ 21\ \ \star\ \ 1 \quad \longrightarrow \quad (c)\ \ 1\ \Big|\ 2\ \Big|\ 21\ \ \star\ \ 1 \quad + (c)\ \ 21\ \Big|\ 2\ \Big|\ 1\ \ \star\ \ 1\quad = \quad (c)\ \ 211\ \Big|\ 2\ \ \star\ \ 1 $\bigskip
\item $21 \ \Big|\ 2\ \Big|\ 2\  \ \star\ \ 1 \ \ \star\ \ 1\quad \longrightarrow \quad  \big[-2(d+c)-d\big]\ 21\ \Big|\ 2 \ \   \star\ \ 1\ \ \star\ \ 1\quad + \ (c)\ \  1\ \Big| \ 2\ \Big|\ 2\ \ \star\ \ 1\ \ \star \ \ 1  $\\ 
\hspace*{4.7cm} $=(-3)(d+c)\ \ 21\ \Big|\ 2\ \ \ \star\ \ 1\ \ \star\ \ 1$\bigskip
\item $211 1\ \Big|\ 2\ \ \star\ \ 2\quad \longrightarrow 
\quad (-d-3c)\ \ 2111\ \ \star\ \ 2\quad +\ \frac{-d}{2}(M_2-1)\ \ 2111\ \Big|
\ 2$\bigskip
\item $211\ \Big|\ 21\ \ \star\ \ 2 \quad \longrightarrow \quad (c)\ \ 2111\ \ \star\ \ 2\quad +\ \frac{-d}{2}(M_2-1)\ \ 211\ \Big|\ 21 $\bigskip
\item $2111\ \ \star\ \ 2\ \ \star\ \ 2 \quad \longrightarrow \quad \big[-d(M_2-1)+d\big]\ 2111\ \ \star\ \ 2 $\bigskip
\item $211\ \Big|\ 2\ \ \star\ \ 2 1\quad \longrightarrow \quad (-d-2c)\ \ 211\ \ \star\ \ 21\quad +\  (c)\ \ 211\ \Big|\ 2\ \ \star\ \ 1  $ \bigskip
\item $211\ \Big|\ 2\ \ \star\  \ 2\ \ \star\ \ 1  \quad \longrightarrow \quad (-d-2c)\ \ 211\ \ \star\ \ 2\ \ \star\ \ 1\quad +\ \big[\frac{-d}{2}(M_2-1) +c\big]\ 211\ \Big|\ 2\ \ \star\ \ 1 $\bigskip
\item $21\ \Big|\ 2 \ \ \star\ \ 211 \quad \longrightarrow \quad (-c)\ \ 21\ \ \star\ \ 211\quad + \ (-c-d)\ \ 21\ \ \star\ \ \ 211 \quad =\ \  (-2c-d)\ \ 21\ \ \star\ \ 211$ \\
\hspace*{4cm} $= \ \ (-2c-d)\ \ 211\ \ \star\ \ 21\quad +\ (2c+d)\ \ 211\ \Big|\ 21$ \bigskip
\item $21\ \Big|\ 2\ \ \star\ \ 21\ \ \star\ \ 1 \quad  \longrightarrow\quad (-c)\ \ 21\ \ \star\ \ 2 1\ \ \star\ \ 1\quad +\  (-c-d)\ \ 21\ \ \star\ \ 21\ \ \star\ \ 1 \quad + \ (c)\ \ 21\ \Big|\ 2\  \ \star\ \ 1\ \ \star\ \ 1 $\\
\hspace*{4cm} $=\ \ (-2c-d)\ \ 21\ \ \star\ \ 21\ \ \star\ \ 1 \quad + \ (c)\ \ 21\ \Big|\ 2\  \ \star\ \ 1\ \ \star\ \ 1 $ \bigskip
\item $21\ \Big|\ 2 \ \ \star\ \ 2 \ \ \star\ \ 1\ \ \star\ \ 1\quad \longrightarrow \quad (-2c-d)\ \ 21\ \ \star\ \ 2\ \ \star\ \ 1\ \ \star\ \ 1\quad + \big[\frac{-d}{2}(M_2-1)+2c \big]\ 21\ \Big|\ 2\ \ \star\ \ 1 \ \ \star\ \ 1 $ \bigskip
\item $211\ \ \star\ \ 21\ \ \star\ \ 2\quad \longrightarrow \quad (c)\ \ 211\ \ \star\ \ 1\ \ \star \ \ 2\quad + \frac{-d}{2}(M_2-1)\ \ 211\ \ \star\ \ 21  $\\
\hspace*{4cm} $=(-c)\ \ 211\ \ \star\ \ 21 \quad \ + (c)\ \ 211\ \ \star\ \ 2\ \ \star\ \ 1\quad +\ \frac{-d}{2}(M_2-1)\ \ 211\ \ \star\ \ 21 $ \\
\hspace*{4cm} $=\ \  (c)\ \ 211\ \ \star\ \ 2\ \ \star\ \ 1\quad +\ \big[\frac{-d}{2}(M_2-1)\ -c\big]\ \ 211\ \ \star\ \ 21 $ \bigskip
\item $211\ \ \star\ \ 2\ \ \star\ \ 2\ \ \star\ \ 1\quad \longrightarrow\quad  \big[ -d(M_2-1) +2c+d \big] \ 211\ \  \star\ \ 2\ \ \star\ \ 1$ \bigskip
\item $21\ \ \star\ \ 21\ \ \star\ \ 21 \quad \longrightarrow\quad (c)\big[\ 1\ \ \star\ \ 21\ \ \star\ \ 21\quad +\ 21\ \ \star\ \ 1\ \ \star\ \ 21 \quad  +\ 21\ \ \star\ \ 21\ \ \star\ \ 1  \ \big]$\\
\hspace*{2cm} $=\ (c)\ \big[\ -\ 211\ \  \star\ \ 21\quad +\ (2)\ \ 21\ \ \star\ \ 1\ \ \star\ \ 21\quad +\ 21\ \ \star\ \ 21\ \ \star\ \ 1  \ \big] $\\
\hspace*{2cm} $=\ (c)\ \big[\ (3)\ \ 21\ \ \star\ \ 21\ \ \star\ \ 1 \quad + \ \ (-2) \ \ 21 \ \ \star\ \ 211\quad - \ 211\ \ \star\ \ 21 \big] $\\
\hspace*{2cm} $=\ (-c)\ \ 211\ \ \star\ \ 21\quad \ + (3c)\ \ 21\ \ \star\  \ 21\ \ \star\ \ 1\quad + \ (2c)\ \ 211\ \Big|\ 21\quad +\ (-2c)\  \ 211 \ \ \star\ \ 21$\\
\hspace*{2cm} $=\ (-3c)\ \ 211\ \ \star\ \ 21\quad \ + (3c)\ \ 21\ \ \star\  \ 21\ \ \star\ \ 1\quad + \ (2c)\ \ 211\ \Big|\ 21 $ \bigskip
\item $21 \ \ \star\ \ 21\ \ \star\ \ 2\ \ \star\ \ 1 \quad \longrightarrow \quad (c)\big[\  1\  \ \star\ \ 21\ \ \star\ \ 2\ \  \star\ \ 1  \quad  +\ 21\ \ \star\ \ 1\ \ \star\ \ 2\ \ \star\ \ 1 \ \big]\quad +$\\
\hspace*{5.3cm} $\big[\frac{-d}{2}(M_2-1)+c\big]\ \ 21\ \ \star\ \ 21\ \ \star\ \ 1$\\
\hspace*{1.3cm} $= (-c)\ \ 211\ \ \star\ \ 2\ \ \star\ \ 1\quad +\ (-2c)\ \ 21\ \ \star\ \ 21\ \ \star\ \ 1 \quad +\ (2c)\ \ 21\ \ \star\ \ 2\ \ \star\ \ 1\ \ \star\ \ 1$ \\
\hspace*{2cm}$+ \big[ \frac{-d}{2}(M_2-1)+c\big]\ 21\ \ \star\ \ 21\ \ \star\ \ 1 $\\
\hspace*{1.3cm} $= (-c)\ \ 211\ \ \star\ \ 2\ \ \star\ \ 1\quad +\ (2c)\ \ 21\ \ \star\ \ 2\ \ \star\ \ 1\ \ \star\ \ 1\quad + \big[ \frac{-d}{2}(M_2-1)-c\big]\ 21\ \ \star\ \ 21\ \ \star\ \ 1 $
\bigskip
\item $21\ \ \star\ \ 2\ \ \star\ \ 2\ \ \star\ \ 1\ \ \star\ \ 1 \quad \longrightarrow \quad (c) \  \ 1\ \ \star\ \ 2\ \ \star\ \ 2\ \ \star\ \ 1\ \ \star\ \ 1 \quad + \big[ -d(M_2-1)+4c+d\big]\ 21\ \ \star\ \ 2\ \ \star\ \ 1\ \ \star 1 $\\
\hspace*{0.5cm} $=\ (-c)\ \ 21\ \ \star\ \ 2\ \ \star\ \ 1\ \ \star\ \ 1\quad +\ (-c)\ \ 2\  \ \star\ \ 21\ \ \star 1\  \ \star 1\quad + (c) \ \ 2\ \ \star\ \ 2 \ \ \star\ \ 1\ \ \star\ \ 1\ \ \star\ \ 1\quad +\ \big[ -d(M_2-1)+4c+d\big]\ 21\ \ \star\  \ 2 \ \ \star \ \ 1\ \ \star\ \ 1$\\
\hspace*{0.5cm} $=\ \big[ -d(M_2-1)+3c+d\big]\ 21\ \ \star\  \ 2 \ \ \star \ \ 1\ \ \star\ \ 1 \quad + \ (c)\ \ 21\ \Big|\ 2\ \ \star\ \ 1\ \ \star\ \ 1 \quad +\ (-c)\ \ 21\ \ \star\ \ 2\ \ \star\ \ 1\ \ \star\ \ 1\quad +\ (c)\ \  2\ \ \star\ \ 2 \ \ \star\ \ 1\ \ \star\ \ 1\ \ \star\ \ 1 $
\\
\hspace*{0.5cm} $=\ \big[ -d(M_2-1)+2c+d\big]\ 21\ \ \star\  \ 2 \ \ \star \ \ 1\ \ \star\ \ 1 \quad + \ (c)\ \ 21\ \Big|\ 2\ \ \star\ \ 1\ \ \star\ \ 1 \quad +\  (c)\ \  2\ \ \star\ \ 2 \ \ \star\ \ 1\ \ \star\ \ 1\ \ \star\ \ 1 $
\bigskip
\item $2\ \ \star\ \ 2\ \ \star\ \ 2\ \ \star\ \ 1\ \ \star\ \ 1\ \ \star 1 \quad \longrightarrow \quad\big[ \frac{-3}{2}(d)(M_2-1)+9c+3d \big]\ 2\ \ \star\ \ 2\ \ \star\ \ 1\ \ \star\ \ 1\ \ \star\ \ 1$
\end{enumerate}\medskip
Now we apply all of this data.
We begin by a weight vector $v= X_1  \bigg[ \Big[ \Big[ \big[ [f_2, f_1], \ f_1\big],\ f_1 \Big] ,\ f_2 \Big] ,\ f_2 \bigg]\cdot m_{\lambda}\ +\ X_2 \Big[\big[ [f_2, f_1], \ f_2\big] ,\ \big[[f_2,\ f_1], f_1\big] \Big]\cdot m_{\lambda} \ +\ \cdots\  +\  X_{19} [f_2, f_1]\ \times\ f_2^2\ \times\ f_1^2\cdot m_{\lambda}\ +\ X_{20} f_2^3 \times f_1^3\cdot m_{\lambda}$ in $M(\lambda)_{\lambda-3\alpha_1-3\alpha_2}$, which is a linear combination of the 20 PBW basis vectors from $M(\lambda)_{\lambda- 3\alpha_1-3\alpha_2}$ \big(from the list of (3,3) weighted monomials\big), with coefficients $X_1,\ldots, X_{20}\in \mathbb{C}$.
We observe as follows, the conditions for maximality of $v$.
These are the 20 conditions which should vanish for the vanishings of (10+10) PBW basis vectors $\bigg[\Big[ \big[ [f_2, f_1],\ f_1\big] ,\ f_2\Big], \ f_2\bigg]\cdot m_{\lambda},\ldots, f_2^3\times f_1^2\cdot m_{\lambda}\ \in M(\lambda)_{\lambda-2\alpha_1-3\alpha_2}$ and $\bigg[ \Big[ \big[ [f_2, f_1],\ f_1\big]  ,\ f_1\Big] ,\ f_2   \bigg]\cdot m_{\lambda}, \ldots, f_2^2\times f_1^3\cdot m_{\lambda}\ \in M(\lambda)_{\lambda-3\alpha_1-2\alpha_2}$, under $e_1$'s \& $e_2$'s action on $v$. 
\begin{align}
     -3(a+b)\ X_1\ \ -a\ X_3\ \  -\frac{b}{2}(M_1-1)\ X_4\ =0 \label{Eqn 3,3 (1)} \\
  -(2a+b)\ X_2\ \   -(2a+b)\ X_3\ \ -\frac{b}{2}(M_1-1)\ X_5\ \ -2a\ X_{17} \ =0\label{Eqn 3,3 (2)}\\
     -(2a+b)\ X_4\ \ -a\ X_5\ \ +\big(-b(M_1-1)+b\big)\ X_6\ =0\label{Eqn 3,3 (3)}\\
    -3(a+b)\ X_7\ \ -a\ X_8\ \ -a\ X_{10}\ \ -\frac{b}{2}(M_1-1)X_{11} \ =0\label{Eqn 3,3 (4)}\\
    -(2a+b)\ X_{10}\ \ -(2a+b)\ X_{12}\ \ -\frac{b}{2}(M_1-1)\ X_{13}\ \ +3a\ X_{17} \ =0\label{Eqn 3,3 (5)}\\
    -(2a+b)\ X_{11}\ \ -a\ X_{13}\ \ +\big(-b(M_1-1)+b\big)\ X_{14}\ \ +a\ X_{18} \ =0\label{Eqn 3,3 (6)}\\
    -3(a+b)\ X_{9}\ \ -a\ X_{15}\ \ -\frac{b}{2}(M_1-1)X_{16}\ = 0\label{Eqn 3,3 (7)}\\
    -(2a+b)\ X_{15}\ \ -3a\ X_{17}\ \ - \frac{b}{2}(M_1-1)\ X_{18} \ =0\label{Eqn 3,3 (8)}\\
    -(2a+b)\ X_{16}\ \ -2a\ X_{18}\ \ + \big(-b(M_1-1)+b\big)\ X_{19}\ =0 \label{Eqn 3,3 (9)}\\
    -a\ X_{19}\ \ + \Big(-3\frac{b}{2}(M_1-1)+3b \Big)\ X_{20}\ =0 \label{Eqn 3,3 (10)}
\end{align}
\bigskip
\begin{align}
    - \big(2(3c+d)+d\big)\ X_1\ \ +c\ X_3\ \ - \frac{d}{2}(M_2-1)\ X_7\ =0 \label{Eqn 3,3 (11)}\\
   (2c+d)\ X_2\ \   -(c+d)\ X_3\ \  - \frac{d}{2}(M_2-1)\ X_8\ \   +(d+2c)\ X_{12}\ \ +2c\ X_{17}\ =0 \label{Eqn 3,3 (12)} \\
    -\big(2(d+2c)+d\big)\ X_4\ \ +\ c\ X_5\ \ +c\ X_{10}\ \ -\Big(\frac{d}{2}(M_2-1)-c\Big)\ X_{11}\ =0 \label{Eqn 3,3 (13)}\\
    -(d+3c)\ X_7\ \ +c\ X_8\ \ - [d(M_2-1)-d] \ X_9\ =0 \label{Eqn 3,3 (14)}\\
    -(d+2c)\ X_{10}\ \  -(d+2c)\ X_{12}\ \ -\Big(\frac{d}{2}(M_2-1)+c\Big)\ X_{15}\ \ -3c\ X_{17}\ =0 \label{Eqn 3,3 (15)}\\
    -(d+2c)\ X_{11}\ \ +c\ X_{15}\ \ + \big(-d(M_2-1)+2c+d\big)\ X_{16}\ \ -c\ X_{18}\ =0 \label{Eqn 3,3 (16)}\\
    -3(c+d)\ X_6\ \ + c\ X_{13}\ \ - \Big( \frac{d}{2} (M_2-1)-2c \Big)\ X_{14}\ \ +c\ X_{19} \label{Eqn 3,3 (17)}\ =0\\
    -(d+2c)\ X_{13}\ \ +3c\ X_{17}\ \ -\Big(\frac{d}{2}(M_2-1)+c\Big)\ X_{18} \label{Eqn 3,3 (18)}\ =0\\
    -(d+2c)\ X_{14}\ \ +2c\ X_{18}\ \ +\ \big(-d(M_2-1)+2c+d\big)\ X_{19} \label{Eqn 3,3 (19)}\ =0\\
    c\ X_{19}\ \ + \Big(-\frac{3}{2}d(M_2-1)+9c+3d\Big)\ X_{20}\ =0\label{Eqn 3,3 (20)}
\end{align}
Observe, we obtain a $20 \times 20$ matrix of coefficients for the above system, whose null space yields the desired space of maximal vectors in $M(\lambda)_{\lambda- 3\alpha_1-3\alpha_2}$. 

We write down a method to analytically compute all the variables $X_1, \ldots, X_{20}$. 
In this we first arbitrarily fix  values for the 4-tuple $(X_{20}, \ X_{18},\ X_{15},\ X_{12})$ in $\mathbb{C}^4$.
We show that by each such choice, we can determine the values of the rest of the 16 variables as functions of $(X_{20},\ X_{18},\ X_{15},\ X_{12})$.
For this we use some subset of equations, one at time, to successively determine each of those 16 dependent variables in the previous sentence.
Now recall, Proposition \ref{Prop KK} \eqref{maxl vect count inequality} says that the solution space to \eqref{Eqn 3,3 (1)}--\eqref{Eqn 3,3 (20)} has dimensions at least $\dim \mathfrak{g}_{3\alpha_1+3\alpha_2}+\dim \mathfrak{g}_{\alpha_1+\alpha_2}= 3+1=4$.
And so the previous two lines together show that the tuple $(X_1,\ldots, X_{20})$ built using each fixed $(X_{20},\ X_{18}, X_{15}, \ X_{12})\in \mathbb{C}^4$, indeed satisfy all the equations among \eqref{Eqn 3,3 (1)}--\eqref{Eqn 3,3 (20)}.
That is, the method that we will discuss below will yield every solution to that system, and importantly, the solution space is 4-dimensional. 

We begin by the identity arising by $(3,3)$ satisfying \eqref{Eqn norm equality condition for general mu= (X,Y) in Case (N)}
\begin{equation}\label{Eqn 3,3 condition simplification}
     b (3-M_1)\ \ = \ \ -\frac{ad}{c}(3-M_2) \ - \ 6a. 
\end{equation}
By this, it can be easily seen that L.H.S. of \eqref{Eqn 3,3 (20)} multiplied by $-\frac{a}{c}$ equals the L.H.S of \eqref{Eqn 3,3 (10)}.
So there are essentially 19 equations.

In Equations \eqref{Eqn 3,3 (1)}--\eqref{Eqn 3,3 (20)}, the coefficients that involve $M_1$ or $M_2$ \big(indeed there are 20 such coefficients one in each in each equation among \eqref{Eqn 3,3 (1)}--\eqref{Eqn 3,3 (20)}\big), could possibly vanish for some suitable choices of $M_1$ and $M_2$; for instance at $M_1\in \{1,2,3\}$ or $M_2= 1+\frac{2c}{d}$ etc.
And those are all the coefficients that can vanish.
So, in the below steps, every time we compute a dependent variable (among 16 mentioned above), we use the non-vanishings of the coefficients $3(a+b), \ a, \ (2a+b), \ 2(3c+d)+d, c,\  (2c+d),\  (c+d),\ 2(d+2c)+d $ (as $b,a,c,d\in \mathbb{N}$) which we have to reciprocate.
In particular the coefficients in the previous line avoid $M_1$ and $M_2$, which the reader can easily observe at each step below.
And we omit writing these sanity checks hereafter.
The variable that is fixed or computed at a given step can be read-off from the step index/number.
\begin{enumerate}
    \item[(1-20)] We arbitrarily fix a value of $X_{20}$.
    \item[(2-19)] Using \eqref{Eqn 3,3 (10)} or \eqref{Eqn 3,3 (20)}, we can uniquely determine $X_{19} = -\frac{3b}{2a}(M_1-3) \cdot X_{20}$ in terms of the value of $X_{20}$.
    \item[(3-18)] Next we fix an arbitrary value for $X_{18}$.
    \item[(4-16)] Using \eqref{Eqn 3,3 (9)} we can uniquely find the value of $X_{16}$ in terms of known $(X_{18},\ X_{19})$.
    \item[(5-14)] \eqref{Eqn 3,3 (19)} yields the $X_{14}$ in terms of $(X_{18},\ X_{19})$.
    \item[(6-15)] Next we arbitrarily fix a value for $X_{15}$.
    \item[(7-17)] \eqref{Eqn 3,3 (8)} yields the value of $X_{17}$ in terms of $(X_{15}, \ X_{18})$.
    \item[(8-13)] \eqref{Eqn 3,3 (18)} yields the value of $X_{13}$ in terms of $(X_{17}, \ X_{18})$.
    \item[(9-11)] \eqref{Eqn 3,3 (16)} yields the value of $X_{11}$ in terms of known $(X_{15},\ X_{16},\ X_{18})$.
    \item[(10-9)] \eqref{Eqn 3,3 (7)} yields the $X_{9}$ in terms of $(X_{15}, \ X_{16})$.
    \item[(11-6)] \eqref{Eqn 3,3 (17)} yields the value of $X_{6}$ in terms of $(X_{13}, \ X_{14},\ X_{19})$.
    \item[(12-12)] Finally we arbitrarily fix the value of $X_{12}$.
    \item[(13-10)] \eqref{Eqn 3,3 (15)} yields the value of $X_{10}$ in terms of $(X_{12},\ X_{15},\ X_{17})$.
    \item[(14-7,8)] By the two equations \eqref{Eqn 3,3 (4)} and \eqref{Eqn 3,3 (14)} on unknowns $(X_7,\ X_8)$, and known $(X_{10},\ X_{11},\ X_9)$, 
    \begin{align*}
        -3(a+b) X_7 \ -\ a X_8\ \ = & \ \ a X_{10} \ + \ \frac{b}{2}(M_1-1)X_{11},\\
        -(d+3c)X_7 \ + \ cX_8\ \ =  & \ \ d(M_2-2)X_9.
    \end{align*}
$X_7$ and $X_8$ can be uniquely found from the above system, as its determinant is equal to $-3ac-3bc-ad-3ac\leq -3-3-1-3<0$ as $b,a,c,d\in \mathbb{N}$.
\item[(15-4,5)] Similarly to in Step 14 above, $X_4$ and $X_5$ can be found using \eqref{Eqn 3,3 (3)} and \eqref{Eqn 3,3 (13)}, in terms of known $(X_6,\ X_{10},\ X_{11})$.
\item[(16-2,3)] Once again similar to in Step 14, $X_2$ and $X_3$ can be found using \eqref{Eqn 3,3 (2)} and \eqref{Eqn 3,3 (12)}, in terms of unknowns $(X_5,\ X_{12},\  X_{17})$.
\item[(17-1)] We finally have two equations \eqref{Eqn 3,3 (1)} and \eqref{Eqn 3,3 (11)} that involve unknown $X_1$, in terms of known $(X_3, \ X_5,\ X_8, \ X_{12},\ X_{17})$.
Now we could use either of the two equations to find $X_1$ (in terms of all the other variables).
But what is the guarantee that the values computed in those two ways match?
\end{enumerate}
For addressing the consistency question (in our method) in (17-1), recall that the actual solution space is 4-dimensional.
So let us map each such solution $(X',\ldots, X'_{20})$ to its sub-tuple $(X'_{20},\ X'_{18},\ X'_{15},\ X'_{12})$.
This must be a surjective projection,  as the above steps showed that each choice of $(X_{20},\ X_{18},\ X_{15},\ X_{12})$ uniquely fixes the values of all the other variables (with some ambiguity about $X_1$).
Hence at the terminating step (17-1), the problem therein does not arise, completing the proof of the theorem. 
\end{proof}
\section{Proof of Theorem \ref{Theorem composition series}: On maximal vectors and characters, when Kac--Kazhdan equation has 4 solutions}
Recall when $A_{i,j}\neq 0$ for all $i,j$, \cite[Theorem II]{Naito 2} and \cite{Naito BGG 2} extend Jantzen's character sum formulas, for quotients of Verma $M(\lambda)$ by its Verma submodules that are isomorphic to $M(\lambda-\beta)$, for domestic-imaginary roots (\cite{Wakimoto}) $\beta\in \Delta^+$.
Such roots $\beta$'s are the Weyl group conjugates of the imaginary simple roots.
Also, the simplest cases of such quotients \big($M(\lambda)$ modulo $M(\lambda-\beta_1)$\big) in \cite{Kac--Kazhdan} are when \eqref{Eqn dot action by any positive root} has unique solution $\lambda-\beta_1$.
Our rank-2 setting with $\lambda\in P^{\pm}$, would perhaps be a first step along negative directions, to set-up a platform to explore the problem in the previous line and Jantzen's sum formulas, for {\it alien} (non-domestic) imaginary $\beta_1\in \Delta^+$.
Particularly, it might be interesting to explore Jantzen's filtrations and character sum formulas, for the early cases of unique solutions $(M_1,\ M_2-M_1>0)$ to \eqref{Eqn dot action by any positive root} in Theorem \ref{Theorem composition series}, by \cite[Proposition 3.1]{T-P} over $A(2)$.

    \begin{proof}[{\bf \textnormal Proof of Theorem \ref{Theorem composition series}}]
    To begin with, $(M_1,0)$ and $(0,M_2)$ satisfy \eqref{Eqn norm equality condition for general mu= (X,Y) in Case (N)} by $\lambda\in P^{\pm}$.
    Assume $\Big( M_1,\  M_2-\frac{2c}{d}M_1\ \in \mathbb{N} \Big)$ to be the unique solution to \eqref{Eqn norm equality condition for general mu= (X,Y) in Case (N)} in $\mathbb{N}\{\alpha_1\ , \ \alpha_2\}$.
  
  The maximality of $f_2^{M_2-\frac{2c}{d}M_1}f_1^{M_1}m_{\lambda}$ follows by \cite[Proposition 3.7(a)]{T-P}.
  For other maximal vectors, we consider the $\mathfrak{g}$-h.w.m. $V=\frac{M(\lambda)}{U(\mathfrak{n}^-)f_1^{M_1}m_{\lambda}}$, with a h.w. vector $v_{\lambda}\in V_{\lambda}$; note $f_2^{M_2-\frac{2c}{d}M_1}f_1^{M_1}v_{\lambda}=0$.
  By the action of Casimir element, maximal vectors in $V$ can have weights $\lambda-M_1\alpha_1-\Big(M_2-\frac{2c}{d}M_1\Big)\alpha_2$ or $\lambda-M_2\alpha_2$; note $V_{\lambda-M_2\alpha_2}$ consists of maximal vectors.

We write for convenience $f_i$ as $f_{\alpha_i}$, and work with the following:\ \  1) An ordered basis $B(\mathfrak{n}^-)$\\ $= \ \left\{ f_{\alpha_1}(1)=f_{\alpha_1}\right\}\cup\left\{ f_{\alpha_2}(1)=f_{\alpha_2}\right\}\cup \left\{f_{\alpha_1+\alpha_2}(1)=[f_{\alpha_1},f_{\alpha_2}]\}\cup \cdots \cup \{ f_{\beta}(1),\ldots, f_{\beta}(\dim(\mathfrak{g}_{\beta}))\right\} \cup \cdots $.\\
Here, for $\beta\in \Delta^+
$ and $\{f_{\beta}(1),\ldots, f_{\beta}(\dim(\mathfrak{g}_{\beta}))$ is an ordered Lyndon word basis of the root space $\mathfrak{n}^-_{-\beta}$.
In the increasing PBW order in $B(\mathfrak{n}^-)$ from left to right, the heights of $\beta$'s do not decrease.\\
2) Thereby, we have the PBW monomial basis for $U(\mathfrak{n}^-)=\mathbb{C}\Big\{f_1\ ;\  f_2\  ; \ f_2f_1\ ;\  \cdots\ ;\\
\big(\cdots \underbrace{\big[f_2,[f_2,f_1]\big]\cdots \big[f_2,[f_2,f_1]\big]}_{n_{221}}$ $ \underbrace{[f_2,f_1]\cdots [f_2,f_1]}_{n_{12}} \underbrace{f_2\cdots f_2}_{n_2}\underbrace{f_1\cdots f_1}_{n_1}\big)\ ; \ \cdots \Big|\  n_1,n_2,n_{1,2},n_{221},\ldots \in \mathbb{Z}_{\geq 0}\Big\}$.
3) Fix a PBW monomial $f_{\beta_{i_1}}(j_1)\cdots f_{\beta_{i_m}}(j_m)$, or its image $f_{\beta_{i_1}}(j_1)\cdots f_{\beta_{i_m}}(j_m)m_{\lambda}$ with $\beta_{i_1},\ldots , \beta_{i_m}$ and $j_1,\ldots, j_m$ not necessarily distinct.
For it, we associate the (ordered) positive root sequence $[\beta_{i_1},\ldots, \beta_{i_m}]$ -- so in particular the heights of roots do not increase from left to right ($i_1\rightsquigarrow i_m$) -- and consider the lexicographic ordering on the space of these root sequences induced by the ordering in $B(\mathfrak{n}^-)$.
Note $\big[2\alpha_1+\alpha_2; \alpha_1+\alpha_2; \alpha_2;\alpha_1\big]$ is an example of ordered sequence decreasing (not just in heights) from left to right. 
We need to work with root sequences to tackle PBW monomial expansions of elements in $U(\mathfrak{n}^-)$.

We begin by fixing a homogeneous element  $x=\sum_{\gamma=(\gamma_1.\ldots, \gamma_k)}c_{\gamma}f_{\gamma_1}\cdots f_{\gamma_k}\in U(\mathfrak{n}^-)_{\big(M_1\alpha_1+\big(M_2-\frac{2c}{d}M_1\big)\alpha_2\big)}$ for PBW monomials $f_{\gamma_1}\cdots f_{\gamma_k}$ \big($f_{\gamma_i}$s allowed to repeat\big) and their ordered root sequences $\gamma=\big[\gamma_1;\ldots ; \gamma_k\big]$, and $c_{\gamma}\in \mathbb{C}$. 
We explore the maximality of $xm_{\lambda}\in M(\lambda)$ or its projection $xv_{\lambda}\in V$.
%$M(\lambda)_{\lambda}$   Fix (any) weight vector $xv_{\lambda}\neq 0$ for $x\in U(\mathfrak{n}^-)$ in $\lambda-M_1\alpha_1-\Big(M_1-\frac{2c}{d}M_1\Big)\alpha_2$-weight space of $V$ with $e_2xv_{\lambda}=0$.   We will show below that $x=0$, which proves the lemma.
  
  Let us observe $e_2$'s action on each of the above summands of $xm_{\lambda}$: $e_2\cdot f_{\gamma_1}\cdots f_{\gamma_k}m_{\lambda}=\sum_{i=1}^k f_{\gamma_1}\cdots f_{\gamma_{i-1}}$ $\big[e_2, f_{\gamma_i}\big]f_{\gamma_{i+1}}\cdots f_{\gamma_k}m_{\lambda}$.
  If $\height(\gamma_i-\alpha_2)> \height(\gamma_{i+1})$ then $f_{\gamma_1}\cdots f_{\gamma_{i-1}} \big[e_2, f_{\gamma_i}\big]f_{\gamma_{i+1}}\cdots f_{\gamma_k}m_{\lambda}$ (when non-zero) is a PBW basis vector.
  The similar assertion is true when $\height(\gamma_i-\alpha_2)=\height(\gamma_{i+1})$ and $f_{\gamma_{i+1}}$ in PBW ordering is below all the basis root vectors $f_{\gamma_i-\alpha_2}(j)$'s that span $[e_2, f_{\gamma_i}]$. %\big(so that $f_{\gamma_1}\cdots f_{\gamma_{i-1}}[e_2,\gamma_i]f_{\gamma_{i+1}}\cdots f_{\gamma_k}$ is also a PBW monomial\big) .
  Next if $\height(\gamma_i-\alpha_2)=0$ then clearly $f_{\gamma_i}=f_2$ and importantly  $f_{\gamma_{i+1}}\cdots f_{\gamma_k} = \underbrace{f_2\cdots f_2}_{i_2 \geq 0} \underbrace{f_1\cdots f_1}_{i_1\geq 0}$ for $i_1+i_2=k-i\geq 0$, and so $f_{\gamma_1}\cdots f_{\gamma_{i-1}} \underset{=\alpha_2^{\vee}}{\big[e_2, f_{\gamma_i}\big]}f_{\gamma_{i+1}}\cdots f_{\gamma_k}m_{\lambda}$ is ($i_2d+i_1c$)-times the PBW basis vector $f_{\gamma_1}\cdots f_{\gamma_{i-1}}f_{\gamma_{i+1}}\cdots f_{\gamma_k}m_{\lambda}$.
  Finally suppose either $0<\height(\gamma_i-\alpha_2)< \height(\gamma_{i+1})$, or $\height(\gamma_i -\alpha_2)=\height(\gamma_{i+1})$ but not all the Lyndon basis root vectors occurring in expansion of $[e_2, f_{\gamma_i}]$ are above $f_{\gamma_{i+1}}$ in our PBW ordering.
  Then we appeal to the following observation.
\begin{claim}
As in the beginning of the proof of Theorem \ref{Theorem composition series}, we fix a BKM $\mathfrak{g}$, and a root vector basis $B(\mathfrak{n}^-)$ for $\mathfrak{n}^-$ with the order of increasing (positive) root heights and the corresponding PWB monomial basis for $U(\mathfrak{n}^-)$.
Recall the lexicographic ordering on root sequences for PBW monomials above.
%Fix an ordered basis $B(\mathfrak{n}^-)\ = \ \left\{ f_{\alpha_1}(1)=f_{\alpha_1};\ f_{\alpha_2}(1)=f_{\alpha_2};\ f_{\alpha_1+\alpha_2}(1)=[f_{\alpha_1},f_{\alpha_2}];\ \ldots; f_{\beta}(1),\ldots, f_{\beta}(\dim(\mathfrak{g}_{\beta}));\ldots \right\}$, for $\beta\in \Delta^+ $ and $f_{\beta}(1),\ldots, f_{\beta}(\dim(\mathfrak{g}_{-\beta}))$ an ordered basis of the root space $\mathfrak{n}^-_{-\beta}$, and such that heights of $\beta$s increase from left to right in $B(\mathfrak{n}^-)$; this basis for $\mathfrak{n}^-$ is written in increasing order when read from left to right. For each PBW monomial $f_{\beta_{i_1}}(j_1)\cdots f_{\beta_{i_m}}(j_m)$, we associate the (ordered) root sequence $[\beta_{i_1},\ldots, \beta_{i_m}]$ -- so in particular the (negative) heights of roots decrease from left to right -- and consider the lexicographic ordering on the space of these root sequences induced by the ordering in $B(\mathfrak{n}^-)$. 
    Fix the product of $f_{\beta_1}=f_{\beta_1}(j_1),\ldots,  f_{\beta_k}=f_{\beta_k}(j_k)\in B(\mathfrak{n}^-)$ not necessarily in the PBW order, for $\beta_1,\ldots, \beta_k\in \Delta^+$ and some $j_1,\ldots, j_k$.
Let $\overline{f_{\beta_1}\cdots f_{\beta_k}}$ denote the PBW monomial obtained by permuting $f_{\beta_1}, \ldots, f_{\beta_k}$.
   Then, the expansion of  $f_{\beta_1}\cdots f_{\beta_k}\in U(\mathfrak{n}^-)$ as a linear combination of PBW monomials, has following properties:
     i) $\overline{f_{\beta_1} \cdots f_{\beta_k}}$ occurs in the sum with coefficient 1.
   ii) The other summands occur as products of Lie words -- i.e. of iterated Lie brackets -- on some of $f_{\beta_1},\ldots, f_{\beta_k}$.
   iii) So $\overline{f_{\beta_1}\cdots f_{\beta_k}}$ is the unique PBW monomial in the sum, whose root sequence is the least in the lexicographic ordering among the other (non-zero) summands' sequences in ii).
%   be any fixed (possibly un-ordered) subset of negative root vectors in $\mathfrak{n}^-$.     Then $f_1\cdots f_k\in U(\mathfrak{n}^-)$ is a linear combination of PBW monomials of the form $F_1\cdots F_m$, $m\leq k$ where each $F_i$ is a right-normed Lie word on some subset of elements $f_1,\ldots ,f_k$. 
\end{claim}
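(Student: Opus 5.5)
The plan is to prove the Claim by induction on the number of inversions needed to sort the word $f_{\beta_1}\cdots f_{\beta_k}$ into PBW order, combined with an induction on $k$; the standard PBW straightening argument is refined to keep track of root sequences. Reorder the factors by adjacent transpositions, as in bubble sort, and use the identity
\[
f_{\beta_i}f_{\beta_{i+1}} \;=\; f_{\beta_{i+1}}f_{\beta_i} + [f_{\beta_i},f_{\beta_{i+1}}]
\]
each time an adjacent pair is out of order. The first summand keeps the same multiset of factors and is one step closer to $\overline{f_{\beta_1}\cdots f_{\beta_k}}$. The second summand is a word of length $k-1$ whose factor $[f_{\beta_i},f_{\beta_{i+1}}]$ is a Lie word lying in $\mathfrak{g}_{-(\beta_i+\beta_{i+1})}$. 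Expand it in the basis $B(\mathfrak{n}^-)$; this is a linear combination of the Lyndon basis vectors $f_{\beta_i+\beta_{i+1}}(j)$, each of which is again a Lie word in the original generators. Iterate. Since the chain of main terms ends at $\overline{f_{\beta_1}\cdots f_{\beta_k}}$ with coefficient exactly $1$, we get i). Every correction term arises from at least one bracketing, and by induction on $k$ applied to the shorter words, its PBW expansion consists of products of iterated brackets of subsets of the $f_{\beta_t}$; this gives ii).

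For iii), the key observation is about root sequences. Every correction monomial has a factor of weight $-\gamma$, where $\gamma$ is a sum of at least two of the $\beta_t$. So $\height(\gamma)$ is strictly larger than the heights of the merged roots, and the remaining factors form a sub-multiset of $\{\beta_t\}$. Write the sequence of $\overline{f_{\beta_1}\cdots f_{\beta_k}}$ in its non-increasing-height form $[\beta_{(1)};\ldots;\beta_{(k)}]$, and compare it with the sorted sequence of a correction monomial position by position. Up to the first position where they differ, the two sequences agree. At that position the correction sequence carries a root that either has strictly larger height (the merged root, or a root that moved forward because another was absorbed) or comes later in the order of $B(\mathfrak{n}^-)$. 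With the paper's convention that larger roots count as larger in the lexicographic order, the correction sequence is therefore strictly greater. Hence $\overline{f_{\beta_1}\cdots f_{\beta_k}}$ is the unique minimal monomial, which is iii).

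The main obstacle is making the comparison in iii) rigorous once merges happen repeatedly and roots of equal height appear. The risk is that a merged root $\gamma$ could equal some $\beta_t$ with a different index $j$, so that it sits at the same height but earlier in the basis order. To handle this, compare first the multiset of heights, in the sense of dominance of sorted height vectors; a merge strictly increases this in dominance order, which strictly increases the lexicographic order on height sequences regardless of the indices $j$. Only when the height vectors coincide would the indices be compared, and that case occurs only for the main term. I will therefore use the total ordering on height sequences as the primary key and invoke the ordering within $B(\mathfrak{n}^-)$ only as a tie-breaker.
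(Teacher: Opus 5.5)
Your straightening argument for i) and ii) (bubble sort, with induction on inversions and on $k$) is fine, and it is a harmless variant of the paper's induction, which instead moves the least factor $f_{\beta_s}$ to the right end. What it actually produces is this: the expansion equals $\overline{f_{\beta_1}\cdots f_{\beta_k}}$ plus PBW monomials whose root multiset $G$ is the multiset of block sums of a partition of $B=\{\beta_1,\ldots,\beta_k\}$ with at least one block of size $\geq 2$.

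\textbf{The gap is in iii).} The claim's order compares root sequences lexicographically, entry by entry, with roots compared by their position in $B(\mathfrak{n}^-)$. So if at some entry the two sequences carry distinct roots of the same height (e.g.\ $2\alpha_1+\alpha_2$ and $\alpha_1+2\alpha_2$), the tie-break in $B(\mathfrak{n}^-)$ decides the comparison right there, before any later height discrepancy is looked at. The indices $j$ play no role, since root sequences do not see them. Your dominance argument only shows that the sorted height vector of $G$ exceeds that of $B$, i.e.\ that $G$ wins at the first entry $p$ where the \emph{heights} differ. It says nothing when some earlier entry $q<p$ already carries different roots of equal height.

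Your remedy (height sequences as primary key, the $B(\mathfrak{n}^-)$-order only as tie-breaker) is a different total order from the claim's. The two disagree on $[\rho;\delta;\ldots]$ versus $[\rho';\epsilon;\ldots]$ with $\height\rho=\height\rho'$, $\rho>\rho'$ and $\height\delta<\height\epsilon$. Minimality in your order does not give minimality in the lexicographic root order. For the same reason, the sentence in your third paragraph that at the first difference the correction root ``either has strictly larger height or comes later in the order'' is exactly the point left unproved. Also, ``the remaining factors form a sub-multiset of $\{\beta_t\}$'' holds only after a single merge.

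\textbf{How to close it.} The missing step is easy to supply directly for roots, by comparing maxima greedily. Let $x$ be the largest element of $B$ in the $B(\mathfrak{n}^-)$-order.
\begin{enumerate}
\item If $x$ lies in a block $S$ with $|S|\geq 2$, then $\height(\sum S)>\height(x)\geq\height(\beta)$ for all $\beta\in B$. Hence $\max G\geq \sum S>x$, and $G$ wins at the first entry.
\item If $\{x\}$ is a singleton block, then $x\in G$, so $\max G\geq x$. If the inequality is strict, $G$ wins. If it is an equality, delete the singleton block $\{x\}$ from both sides and recurse on the next entry; the restricted partition still has a block of size $\geq 2$.
\end{enumerate}
Since a non-trivial block always survives, the recursion must end with $G$ winning. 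This proves iii) for the order actually in the claim.

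The paper instead builds this comparison into its induction. It uses only two local facts: absorbing the least root into another factor raises the sorted sequence, and adjoining common roots to two sequences of equal weight preserves strict lexicographic inequality.
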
 
  \begin{proof}[{\bf \textnormal Proof of the claim}]
      We prove the claim for products $f_{\beta_1} \cdots f_{\beta_k}$ by inducting on their lengths $k\geq 1$.
      In the base step $k=1$, $f_{\beta_1}$ is already a PBW monomial by the choice of $f_{\beta_i}=f_{\beta_i}(j_i)$s in the statement of the claim.\\
      Induction step: Assume $k>1$ in the product. 
      Pick $s\in \{1,\ldots, k\}$ such that $f_{\beta_s}$ is the least among $f_{\beta_1}, \ldots, f_{\beta_k}$  (possibly $s=k$) in our ordering in $B(\mathfrak{n}^-)$.
      Suppose $1<s$, and let $\overline{f_{\beta_s}\cdots f_{\beta_k}}=F_1 f_{\beta_s}$ with $F_1$ also a PBW monomial.
    By the induction hypothesis $f_{\beta_s}\cdots f_{\beta_k}$ can be written as a desired sum $1 \overline{f_{\beta_s}\cdots f_{\beta_k}}+\sum_{t=2}^r c_tF_t$ of PBW monomials, with $c_t\neq 0$ and with the root sequences of $F_t$'s strictly above that of $\overline{f_{\beta_s}\cdots f_{\beta_k}}$ in the lexicographic ordering.
    Observe that the number of root vector factors in each $F_t$ is strictly less than $k-s+1$; one can directly see this, or prove it by adding it to the induction hypothesis.  
    In turn by the induction hypothesis applied to $f_{\beta_1}\cdots f_{\beta_{s-1}}F_t$ for $t\geq 2$, the resulting summands other than $\overline{f_{\beta_1}\cdots f_{\beta_{s-1}}F_t}$  in $f_{\beta_1}\cdots f_{\beta_{s-1}}F_t$ have root sequences strictly above that of  $\overline{f_{\beta_1}\cdots f_{\beta_{s-1}}F_t}$ along with all the desired properties.
    Observe using the transitivity property of the lexicographic ordering that $\overline{f_{\beta_1}\cdots f_{\beta_{s-1}}F_t}$ have their root sequences strictly above that of $\overline{f_{\beta_1}\cdots f_{\beta_k}}$ for all $t$.
    Moreover $\overline{f_{\beta_1}\cdots f_{\beta_{s-1}} \big( \overline{f_{\beta_s}\cdots f_{\beta_k}}\big)} = \overline{f_{\beta_1}\cdots f_{\beta_k}}$ can be seen by comparing the root sequences, and we are done. 
    Finally, we assume that $s=1$.
    Then $f_{\beta_1}(f_{\beta_2}\cdots f_{\beta_k})=(f_{\beta_2}\cdots f_{\beta_k})f_{\beta_1}+\sum_{i=2}^k f_{\beta_2}\cdots f_{\beta_{i-1}}[f_{\beta_1} ,\ f_{\beta_i}]f_{\beta_{i+1}}\cdots f_{\beta_k}$.
    Now each of $f_{\beta_2}\cdots f_{\beta_k}
    $ and $f_{\beta_2}\cdots f_{\beta_{i-1}}[f_{\beta_1}, f_{\beta_i}]f_{\beta_{i+1}}\cdots f_{\beta_k}$ (when non-zero) have the number of root vector factors $\leq k-1$,
    and so applying induction hypothesis finishes the proof; here the minimality of the root sequence for $\overline{f_{\beta_1}\cdots f_{\beta_k}}$ among those of the others is once again seen by the transitive property of the ordering.     
  \end{proof}
 We resume the proof of the theorem.
Observe for any PBW monomial basis vector $f_{\gamma_1}\cdots f_{\gamma_k}m_{\lambda}\in M(\lambda)_{\lambda-M_1\alpha_1-\big(M_2-\frac{2c}{d}M_1\big)\alpha_2}$ other than $[f_2,f_1]f_2^{M_2-\frac{2c}{d}M_1-1}f_1^{M_1-1}m_{\lambda}$ and $f_2^{M_2-\frac{2c}{d}M_1}f_1^{M_1}m_{\lambda}$, that the root sequences are strictly above \allowdisplaybreaks $\big[\alpha_1+\alpha_2 ;\underbrace{\alpha_2;\ldots; \alpha_2}_{M_2-\frac{2c}{d}M_1-1};\underbrace{\alpha_1;\ldots; \alpha_1}_{M_1-1}\big] \  \underset{\text{lexico.}}{ > } \ \big[\underbrace{\alpha_2;\ldots; \alpha_2}_{M_2-\frac{2c}{d}M_1};\underbrace{\alpha_1;\ldots; \alpha_1}_{M_1}\big]$ in our lexicograhic ordering.
Moreover for such $f_{\gamma_1}\cdots f_{\gamma_k}$, observe that the same inequality in the previous line holds true between the root sequence of $\overline{f_{\gamma_1}\cdots f_{\gamma_{i-1}} \big[e_2, f_{\gamma_i}\big]f_{\gamma_{i+1}}\cdots f_{\gamma_k}}$ -- thereby for every PBW monomial in the expansion for $f_{\gamma_1}\cdots f_{\gamma_{i-1}} \big[e_2, f_{\gamma_i}\big]f_{\gamma_{i+1}}\cdots f_{\gamma_k}$ -- and the sequence $\big[\underbrace{\alpha_2;\ldots; \alpha_2}_{M_2-\frac{2c}{d}M_1-1} ;\underbrace{\alpha_1;\ldots; \alpha_1}_{M_1}\big]$, for all $i$.
So $f_2^{M_2-\frac{2c}{d}M_1-1}f_1^{M_1}m_{\lambda}$ does not occur in the PBW-expansion of such 
$f_{\gamma_1}\cdots f_{\gamma_{i-1}} \big[e_2, f_{\gamma_i}\big]f_{\gamma_{i+1}}\cdots f_{\gamma_k}m_{\lambda}$.
However, note that $e_2\ \cdot\  [f_2,f_1]f_2^{M_2-\frac{2c}{d}M_1-1}f_1^{M_1-1}m_{\lambda}\ = \ c f_1f_2^{M_2-\frac{2c}{d}M_1-1}f_1^{M_1-1}m_{\lambda}- c\big(M_2-\frac{2c}{d}M_1-1\big)\big(2M_1+\frac{d}{2c}-1\big) [f_2,f_1]f_2^{M_2-\frac{2c}{d}M_1-2}f_1^{M_1-1}m_{\lambda}$, and further in its PBW expansion we observe $f_2^{M_2-\frac{2c}{d}M_1-1}f_1^{M_1}m_{\lambda}$ to occur with coef. $c$ by the above claim.

In parallel, we observe the action of $e_1$ on summands in $xm_{\lambda}$.
Note that $\Big(\lambda-(M_1-1)\alpha_1-\Big(M_2-\frac{2c}{d}M_1\Big)\alpha_2\Big)$-weight space of $U(\mathfrak{g})f_1^{M_1}m_{\lambda}$ is 0.
 So, $e_1 x v_{\lambda}=0\in V$ iff $e_1x m_{\lambda}=0\in M(\lambda)$.
  \\ Next, $e_1 [f_2, f_1]f_2^{M_2-\frac{2c}{d}M_1-1}f_1^{M_1-1}m_{\lambda}= -af_2^{M_2-\frac{2c}{d}M_1}f_1^{M_1-1}m_{\lambda}- \frac{b}{2}(M_1-1) [f_2,f_1]f_2^{M_2-\frac{2c}{d}M_1-1}f_1^{M_1-2}m_{\lambda}$.
 Moreover as in the $e_2$-action, observe for each 
 $i$, by comparing the lexicographic orders that the PBW-expansion of $f_{\gamma_1}\cdots f_{\gamma_{i-1}}[e_1, f_{\gamma_i}] f_{\gamma_{i+1}}\cdots f_{\gamma_k}m_{\lambda}$ involves $f_2^{M_2-\frac{2c}{d}M_1}f_1^{M_1-1}m_{\lambda}$, iff $f_{\gamma_1}\cdots f_{\gamma_k}= [f_2, f_1]f_2^{M_2-\frac{2c}{d}M_1-1} f_1^{M_1-1}$.
 So for $e_1xm_{\lambda}=0$, $x$ must be in the span of all those PBW monomials other than $[f_2, f_1]f_2^{M_2-\frac{2c}{d}M_1}f_1^{M_1-1}m_{\lambda}$.
 The maximality of $f_1^{M_1}m_{\lambda}$ says $f_2^{M_2-\frac{2c}{d}M_1}f_1^{M_1}m_{\lambda}$ is maximal.
 
By the observations in the above two paragraphs, we see for $e_1xm_{\lambda}=e_2xm_{\lambda}=0$ that $xm_{\lambda}= y +pf_2^{M_2-\frac{2c}{d}M_1}f_1^{M_1}m_{\lambda}$, for a maximal vector $y$ in the span of the PBW basis vectors other than $[f_2,f_1]f_2^{M_2-\frac{2c}{d}M_1-1} f_1^{M_1-1}m_{\lambda}$ and $f_2^{M_2-\frac{2c}{d}M_1}f_1^{M_1}m_{\lambda}$, and $p\in \mathbb{C}$.
 Hence $xv_{\lambda}$ is maximal in $V$ iff $xm_{\lambda}\in M(\lambda)$ is so.
 
Let $N\neq 0$ be a submodule of $M(\lambda)$ with trivial $\lambda-M_1\alpha_1$ and $\lambda-M_2\alpha_2$ weight spaces.
Then observe by the uniqueness of solution $\Big(M_1, M_2-\frac{2c}{d}M_1\Big)$ in $\mathbb{N}\{\alpha_1\}\oplus \mathbb{N}\{\alpha_2\}$, the following : i) $N$ is generated by the maximal weight vectors -- possibly involving the maximal vector $f_2^{M_2-\frac{2c}{d}M_1}f_1^{M_1}m_{\lambda}$ -- from $M(\lambda)_{\lambda-M_1\alpha_1-\big(M_2-\frac{2c}{d}M_1\big)\alpha_2}$ space. ii) Moreover $N$ is the direct sum of some copies of $L\Big(\lambda-M_1\alpha_1-\big(M_2-\frac{2c}{d}M_1\big)\alpha_2\Big)$; the submodule generated by $f_2^{M_2-\frac{2c}{d}M_1} f_1^{M_1}m_{\lambda}$ is isomorphic to this simple. 
iii) Thus the submodule $N\cap U(\mathfrak{g})f_1^{M_1}m_{\lambda}= N\cap U(\mathfrak{g})f_2^{M_2-\frac{2c}{d}M_1}f_1^{M_1}m_{\lambda}$ has a compliment (possibly 0) in $N$.
In parallel notice $\lambda-M_1\alpha_1- \big(M_2-\frac{2c}{d} M_1\big)\alpha_2$ is not a weight of the submodule $U(\mathfrak{n}^-)f_2^{M_2}m_{\lambda}$.
Moreover, as there is no solution to \eqref{Eqn dot action by any positive root} below this weight,  $N \supsetneqq  N\cap U(\mathfrak{n}^-)f_2^{M_2}m_{\lambda}=\{0\}$.
Note by the same reasoning $ \big(U(\mathfrak{n}^-)f_1^{M_1}m_{\lambda}\big)\cap \big( U(\mathfrak{n}^-)f_2^{M_2}m_{\lambda}\big)=0$. 
Now all the claims in the theorem can be easily seen.
\end{proof}

\bigskip

  \noindent
\address{(Souvik Pal)} \textsc{Department of Sciences and Humanities, CHRIST University, Bangalore 560 074, India.}
  \textit{E-mail address}: \email{\texttt{pal.souvik90@gmail.com, souvik.pal@christuniversity.in}} \smallskip\\
\address{(Supravat Sarkar)} \textsc{Department of Mathematics, Fine Hall, Princeton University, Princeton, NJ 700108, USA.}
  \textit{E-mail address}: \email{\texttt{ss6663@princeton.edu}}   \bigskip\\
       	 \address{(G. Krishna Teja) \textsc{Stat. Math. Unit, Indian Statistical Institute Bangalore Center,  560059,  India.}}
	 \  \  	 \textit{E-mail address}: \email{\texttt{tejag@alum.iisc.ac.in, tejag\_pd@isibang.ac.in}}

\end{document}